  \numberwithin{equation}{section}
\crefname{enumi}{item}{items}
\Crefname{enumi}{Item}{Items}
\crefname{proposition}{Proposition}{Propositions}
\crefname{theorem}{Theorem}{Theorems}
\crefname{corollary}{Corollary}{Corollaries}
\crefname{lemma}{Lemma}{Lemmas}
\crefname{definition}{Definition}{Definitions}
\crefname{remark}{Remark}{Remarks}
\def\dd{{\rm d}}
\def\im{\mathrm{i}\,}
\def\eps{\varepsilon}
\def\e{\varepsilon}
\def\epsilon{\varepsilon}
\def\de{{\partial}}
\def\drm{{\mathrm{d}}}
\def\uno{{\mathrm{Id}}}
\def\RR {\mathbb{R}}
\def\CC {\mathbb{C}}
\def\TT {\mathbb{T}}
\def\ZZ {\mathbb{Z}}
\def\NN {\mathbb{N}}
\def\Re{{\rm Re}}
\def\Im{{\rm Im}}
\newcommand{\cA}{\mathcal{A}}
\newcommand{\cB}{\mathcal{B}}
\newcommand{\cD}{\mathcal{D}}
\newcommand{\cF}{\mathcal{F}}
\newcommand{\cI}{\mathcal{I}}
\newcommand{\cL}{\mathcal{L}}
\newcommand{\cM}{\mathcal{M}}
\newcommand{\cN}{\mathcal{N}}
\newcommand{\cO}{\mathcal{O}}
\newcommand{\cQ}{\mathcal{Q}}
\newcommand{\cR}{\mathcal{R}}
\newcommand{\cT}{\mathcal{T}}
\newcommand{\cU}{\mathcal{U}}
\newcommand{\cV}{\mathcal{V}}
\newcommand{\cZ}{\mathcal{Z}}
\newcommand{\ttA}{\mathtt{A}}
\newcommand{\ttL}{\mathtt{L}}
\newcommand{\ttT}{\mathtt{T}}
\newcommand{\ttX}{\mathtt{X}}
\newtheorem{proposition}{Proposition}[section]
\newtheorem{theorem}[proposition]{Theorem}
\newtheorem{lemma}[proposition]{Lemma}
\theoremstyle{definition}
\newtheorem{remark}[proposition]{Remark}
\newtheorem*{remark*}{Remark}
\theoremstyle{definition}
\numberwithin{equation}{section}
\definecolor{spectrumM}{rgb}{0.1,0.5,0.1} % Dark Green for M
\definecolor{spectrumLpositive}{rgb}{0.8,0.2,0.2} % Red for L's positive eigenvalue
\definecolor{spectrumLballs}{rgb}{0.2,0.2,0.8} % Blue for L's other eigenvalues/balls
\title[Long-wave instability of periodic shear flows]{Long-wave instability of periodic shear flows for the 2D Navier-Stokes equations}
\author{Maria Colombo}
\author{Michele Dolce}
\author{Riccardo Montalto}
\author{Paolo Ventura}
\address{Institute of Mathematics, EPFL, Station 8, 1015 Lausanne, Switzerland}
\email{maria.colombo@epfl.ch}
\email{michele.dolce@epfl.ch}
\email{paolo.ventura@epfl.ch}
\address{Dipartimento di Matematica ``Federigo Enriques'', Università degli Studi di Milano,
Via Cesare Saldini 50, 20133, Milano, Italy}
\email{riccardo.montalto@unimi.it}
\begin{document}
\begin{abstract}
In 1959, Kolmogorov proposed to study the instability of the shear flow $(\sin(y),0)$ in the vanishing viscosity regime in tori $\TT_{\alpha}\times \TT$. 
 This question was later resolved by Meshalkin and Sinai. 
We extend the problem to general shear flows $(U(y),0)$ and show that every $U(y)$ exhibits long-wave instability whenever $\|\partial_y^{-1} U\|_{L^2} > \nu$ and $\alpha\ll \nu$, with $\nu$ being the kinematic viscosity. This instability mechanism confirms previous findings by Yudovich in 1966, supported also by several numerical results, and is established through two independent approaches: one via the construction of Kato’s isomorphism and one via normal-forms. 
Unlike in many other applications of the latter methods, both proofs deal with the presence of a delicate term in the linearized operator that becomes singular as $\alpha$ approaches $0$.
\end{abstract}

\maketitle

\noindent
{\bf Key words:} Fluid Mechanics, Navier-Stokes equations, long wave perturbations, Kato spectral method, Normal Forms.

\noindent
{\bf MSC 2020:} 76D05, 76D33, 35Q30, 35Q35, 35P15

\tableofcontents

\section{Introduction}
We consider the 2D forced Navier-Stokes equations in a rectangular torus
\begin{align}
\label{eq:NSintro}
\begin{cases}
\de_tw+v\cdot \nabla w=\nu\Delta w+f,\qquad (\tilde{x},y) \in \TT_\alpha\times \TT\\   v=\nabla^\perp\phi, \qquad \Delta\phi =w,
\end{cases}
 \end{align}
where $v$ is the velocity field, $w=\nabla^\perp\cdot v$ the vorticity, $\nu$ is the kinematic viscosity, which is proportional to the inverse Reynolds number,  $\TT_\alpha=\mathbb{R}/({\tfrac{2\pi}{\alpha}}\mathbb{Z})\simeq[0,\frac{2\pi}{\alpha})$ with $\alpha<1$ being the inverse aspect ratio of the torus and $\nabla^\perp=(-\de_y,\de_{\tilde{x}})$, $ \Delta=\de_{yy}+\de_{\tilde{x}\tilde{x}}$. The goal of this paper is understanding stability properties of shear flows
 \begin{align}
     u_E(\tilde x, y)=(U(y),0), \qquad \omega_E(\tilde x, y)=-U'(y), 
 \end{align}
 with  $U$ being sufficiently regular and $\int_{\TT} U(y)\dd y=0$. These
are steady states of \cref{eq:NSintro} with the force 
 \begin{equation}
     f(\tilde x, y)=\nu U'''(y).
 \end{equation}
 To state the main result, we first consider perturbations $\omega=\omega(t,x,y) $ of $\omega_E$ of the form 
 \begin{equation}
     w(t,\tilde{x},y)=\omega_E(y)+\omega(t,\alpha \tilde x  ,y)\, ,
 \end{equation} 
 so that $\omega$ is $2\pi$ periodic w.r.to the new variable $x$. Then we can write 
 \begin{align}
 \label{eq:omegaintro}
\begin{cases}
         \de_t\omega+\alpha U(y)\de_x\omega-\alpha U''(y)\de_x\psi=\nu \Delta_\alpha\omega-u\cdot\nabla_\alpha \omega, \qquad (x,y)\in \TT^2\\
     u=\nabla^\perp_\alpha \psi, \qquad \Delta_\alpha\psi=\omega,
     \end{cases}
 \end{align}
 where we denote $\nabla^\perp_\alpha=(-\de_y,\alpha\de_x)$ and $\Delta_\alpha=\de_{yy}+\alpha^2\de_{xx}.$ The main result of this paper, is the presence of unstable eigenvalues for the Orr-Sommerfeld operator 
 \begin{align}
     \label{def:Lalphanu}
    \boldsymbol{\cL}_{\nu,\alpha}\coloneqq \nu\Delta_\alpha-\alpha\de_x(U(y)-U''(y)\Delta_\alpha^{-1}),
 \end{align}
 where $    \boldsymbol{\cL}_{\nu,\alpha}:H^2(\TT^2)\to L^2(\TT^2)$. This type of instability arises in the  regime $\alpha\ll \nu$, as stated more precisely below.
 \begin{theorem}[Linear long-wave instability]
 \label{th:mainLin} Let $U\in H^3(\TT)$ be a shear flow profile with $0$ average. Then there exist  constants $\delta_0\coloneqq \delta_0(\|U\|_{H^3})\in (0,1)$ and $C\coloneqq C(\|U\|_{H^3}) \geq 1$ such that, for every $\nu, \alpha>0$ and $k\in \ZZ\setminus\{0\}$ with
  \begin{equation}
 \label{hyp:U}
    \|\de_y^{-1}U\|_{L^2}>\nu \quad\text{and}\quad \alpha|k|\leq \delta_0\nu\,,
 \end{equation} 
 there exists a unique (up to scalar multiplication), simple unstable eigenfunction of $    \boldsymbol{\cL}_{\nu,\alpha}$ in the class $\{ e^{ikx} f(y): f\in L^2(\TT)\}$. The remaining spectrum of the operator $  \boldsymbol{\cL}_{\nu,\alpha}$, in the same class,  is pure-point 
 and contained in $\{\Re \, z \leq -\nu/2\}$. The unstable eigenpair $\big(\boldsymbol{\lambda}_{\nu,\alpha k},e^{ikx}\boldsymbol{V}_{\nu,\alpha 
 k}(y)\big)$ fulfills the estimate
\begin{equation}\label{intunstableeigenvalue}
\Big|\boldsymbol{\lambda}_{\nu,\alpha k} - \frac{(\alpha k)^2}{\nu}\big( \|\de_y^{-1} U \|_{L^2}^2 - \nu^2\big) \Big| \leq C %\red{\Big(} 
\frac{(\alpha |k|)^3}{\nu^2}  
,\qquad \|\boldsymbol{V}_{\nu,\alpha k}-U\|_{L^2}\leq C \frac{\alpha |k|}{\nu} \, . 
\end{equation}
 \end{theorem}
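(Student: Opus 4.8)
The plan is to diagonalize $\boldsymbol{\cL}_{\nu,\alpha}$ in the Fourier variable $x$, so that on the invariant subspace $\{e^{ikx}f(y)\}$ it acts as the one-dimensional Orr-Sommerfeld operator
\[
  L_{k} f = \nu(\partial_{yy} - \alpha^2 k^2)f - i\alpha k\big(U f - U'' (\partial_{yy}-\alpha^2 k^2)^{-1}f\big),
\]
and then treat this as a perturbation of $\nu\partial_{yy}$ as the small parameter $\beta := \alpha|k|/\nu \to 0$. The key structural observation is that the dangerous term $-i\alpha k\,U''(\partial_{yy}-\alpha^2 k^2)^{-1}$ becomes singular as $\alpha\to 0$ because $(\partial_{yy}-\alpha^2k^2)^{-1}$ does not extend boundedly to the zero mode in $y$; one must therefore split $L^2(\TT)$ according to the kernel of $\partial_{yy}$. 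Writing $f = c + f_\perp$ with $c$ the $y$-average and $f_\perp$ mean-zero, the constant mode is where the instability lives: $\nu\partial_{yy}$ annihilates $c$, and the quadratic-in-$\alpha k$ gain from the $U''\Delta_\alpha^{-1}$ term acting back on the constant mode produces exactly the leading eigenvalue $\frac{(\alpha k)^2}{\nu}(\|\partial_y^{-1}U\|_{L^2}^2 - \nu^2)$. Concretely, I would look for the eigenfunction in the form $V = U + \beta\, r$ (note $\|V - U\|_{L^2} \le C\beta$ is exactly the claimed bound) and expand $L_k(U + \beta r) = \lambda (U+\beta r)$ in powers of $\beta$.

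The first step is a quantitative resolvent/Lyapunov–Schmidt reduction. On the mean-zero sector, $\nu\partial_{yy}$ is invertible with $\|(\nu\partial_{yy})^{-1}\|_{L^2_\perp\to H^2_\perp}\lesssim \nu^{-1}$, and the off-diagonal coupling terms carry a factor $\alpha|k|$; the Schur complement argument then shows that for $\beta\le\delta_0$ small the mean-zero part of the eigenfunction equation can be solved for $f_\perp$ as a function of the constant mode $c$, with $\|f_\perp\|_{L^2}\lesssim \beta |c|$. Plugging this back into the constant-mode equation reduces the eigenvalue problem to a scalar equation $g(\lambda,\beta) = 0$ where $g(\lambda,0) = \lambda - \frac{(\alpha k)^2}{\nu}(\|\partial_y^{-1}U\|_{L^2}^2-\nu^2)$ (the inner product $\langle U'', \partial_y^{-2} U\rangle = -\|\partial_y^{-1}U\|_{L^2}^2$ after integration by parts produces the norm, and the $-\nu^2$ comes from the diagonal $-\alpha^2k^2\nu$ contribution tracked carefully through the reduction). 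The implicit function theorem, with explicit control of $\partial_\lambda g$ and of the $\beta$-dependent remainder, gives the unique root $\boldsymbol{\lambda}_{\nu,\alpha k}$ with the error estimate $O((\alpha|k|)^3/\nu^2)$ in \cref{intunstableeigenvalue}, and the corresponding eigenfunction estimate. Hypothesis $\|\partial_y^{-1}U\|_{L^2}>\nu$ is what makes this root have positive real part.

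For the statement about the rest of the spectrum, the idea is an energy estimate: for any eigenpair $(\lambda, f)$ of $L_k$ with $f$ mean-zero-dominated, pairing with $\bar f$ and taking real parts, the skew-adjoint-looking advection term $-i\alpha k\, U f$ contributes nothing to $\Re$, the term $i\alpha k\, U''\Delta_\alpha^{-1} f$ is bounded by $C\alpha|k|\|f\|_{L^2}^2$, and $\Re\langle \nu\Delta_\alpha f, f\rangle \le -\nu\|\partial_y f\|_{L^2}^2 \le -\nu\|f_\perp\|_{L^2}^2$ (Poincaré on $\TT$). Hence $\Re\lambda \le -\nu + C\alpha|k| \le -\nu/2$ once $\delta_0$ is chosen so that $C\delta_0 \le 1/2$, provided we already know such an eigenfunction is not the unstable one, i.e.\ has a controlled constant-mode component; the Lyapunov–Schmidt reduction above shows any other eigenvalue must correspond to a branch with $\lambda$ bounded away from the unstable value, hence lies in this half-plane. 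That the spectrum is pure point follows because $L_k - \nu\Delta_\alpha$ is a relatively compact (indeed bounded, after the $\Delta_\alpha^{-1}$ smoothing) perturbation of the operator $\nu\Delta_\alpha$ with compact resolvent on $\TT$, so $L_k$ has compact resolvent and discrete spectrum.

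The main obstacle, as the abstract itself flags, is handling the term $\alpha\partial_x U''\Delta_\alpha^{-1}$ uniformly as $\alpha\to 0$: $\Delta_\alpha^{-1}$ is $\nu$-independent but blows up like $\alpha^{-2}$ on the $x$-independent part of functions in $y$, so the naive bound $\|\alpha\partial_x U''\Delta_\alpha^{-1}\| \lesssim \alpha^{-1}$ is useless. The resolution is precisely the mean-zero/constant splitting in $y$: on the constant-in-$y$ subspace one computes the action of this term \emph{exactly} (it is where the $\|\partial_y^{-1}U\|_{L^2}^2$ comes from and it is genuinely $O((\alpha k)^2)$, not $O(1)$), while on the mean-zero sector $\Delta_\alpha^{-1}$ is bounded by $C$ uniformly in $\alpha$ so the term is a genuine $O(\alpha|k|)$ perturbation there. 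Getting the bookkeeping of this two-scale structure right — tracking which factors of $\alpha k$ and $\nu$ appear in each block of the $2\times 2$ operator matrix — is the technical heart of the argument, and is what both the Kato-isomorphism and the normal-form approaches of the paper must confront.
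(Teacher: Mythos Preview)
Your Lyapunov--Schmidt/Schur-complement strategy is essentially the paper's normal-form approach (Section~\ref{sezione-forma-normale}), and the $\Pi_0/\Pi_{\neq}$ splitting is the right one. But there is a genuine gap in the bookkeeping that undermines both the eigenvalue expansion and the energy estimate.

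You write that ``the off-diagonal coupling terms carry a factor $\alpha|k|$'' and hence $\|f_\perp\|_{L^2}\lesssim \beta|c|$. This is false for the $(2,1)$ block: applying $L_k$ to a constant $c$ gives $\Pi_{\neq}L_k[c]=-\tfrac{i}{\eps}U''c-i\eps Uc$, of size $\eps^{-1}$, not $\eps$ (this is exactly the singular term the paper flags). Consequently $\|f_\perp\|\sim(\nu\eps)^{-1}|c|$, so the mean-zero part \emph{dominates} the constant part, and the reduction is non-perturbative---the paper's transformation $\ttT$ satisfies $\ttT={\rm Id}+\cO(\eps^{-1})$, far from the identity. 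What rescues the Schur complement is the \emph{other} off-diagonal block: the paper's key cancellation (Lemma~\ref{lem:fundamentalcancelation}) gives $\Pi_0\cR_\eps=\cO(\eps^2)$, so $\Pi_0 L_k\Pi_{\neq}=\cO(\eps^3)$, and the product $\eps^3\cdot\nu^{-1}\cdot\eps^{-1}=\eps^2/\nu$ is the correct eigenvalue scale. You do not clearly invoke this cancellation; your sentence about the action ``on the constant-in-$y$ subspace'' being ``genuinely $O((\alpha k)^2)$'' appears to conflate the action \emph{on} constants (which is $\cO(\eps^{-1})$) with the projection \emph{onto} constants after one pass through the resolvent (which is where the $\|\partial_y^{-1}U\|_{L^2}^2$ actually arises).

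The same error breaks your energy estimate for the remaining spectrum: the claim $|\langle i\eps U''\Delta_\alpha^{-1}f,f\rangle|\le C\eps\|f\|^2$ fails whenever $\Pi_0 f\neq 0$, since $\Delta_\alpha^{-1}\Pi_0=-\eps^{-2}\Pi_0$ produces a term of size $\eps^{-1}|\Pi_0 f|\,\|\Pi_{\neq}f\|$ in the real part. There is no a~priori reason this is small for a general eigenfunction, and the argument becomes circular (``provided we already know such an eigenfunction\ldots has a controlled constant-mode component''). The paper does not use an energy estimate here: it either tracks every eigenvalue via the Kato isomorphism built from Riesz projections around each $-\nu(j^2+\eps^2)$ (Section~\ref{sec:Kato}), or block-diagonalizes the full stable part after decoupling the unstable direction (Section~\ref{block-diagonalization-stable-part}).
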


    \begin{figure}[h!]
    \centering
    \begin{subfigure}{0.46\textwidth}
        \centering
        \includegraphics[width=\textwidth]{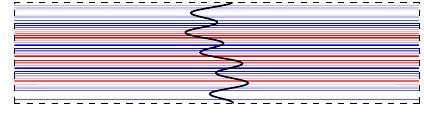}
        % negative space
        \caption{Shear flow $U(y)=\sin(y)+\cos(5y)$}
    \end{subfigure}
    \hspace{5mm}
    \begin{subfigure}{0.46\textwidth}
        \centering
        \includegraphics[width=\textwidth]{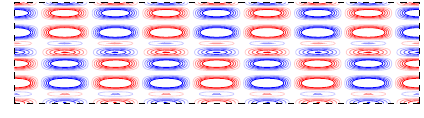}
        \caption{Approximate unstable eigenfunction}
    \end{subfigure}
        \caption{A sketch of the vorticity of the background laminar solution vs the unstable eigenfunction in a domain $\TT_{\alpha}\times \TT$ (with $\alpha=1/4$ for visualization purposes). In figure \textrm{(A)}, the black line is the shear velocity profile $U(y)=\sin(y)+\cos(5y)$ and the straight lines are the  level lines of its vorticity. In figure \textrm{(B)} are shown the level lines of  $\Re(e^{ix}\boldsymbol{V}_{\nu,\alpha k})\approx \cos(x)(\sin(y)+\cos(5y))$ (in which we neglect terms of order $\alpha/\nu$). }
\end{figure}

This result is, at least to our knowledge, the first to provide a rigorous justification of a long-wave instability mechanism for general shear flows in long rectangular tori $\TT_{\alpha}\times \TT$ with $\alpha\ll 1$. We note that the condition $\|\de_{y}^{-1}U\|_{L^2}>\nu$ is essentially the same found by Yudovich in \cite[eq (2.21)]{Yudovich66}\footnote{Unfortunately, this paper was never translated into English, and the result appears to have been somewhat forgotten.}, although it does not seem to have a rigorous mathematical justification\footnote{The analysis in \cite{Yudovich66} (see also the more recent related result \cite{revina2017stability}) appears to suffer from the same issues highlighted in \cite{lin2003instability} concerning Tollmien’s original work on shear flow instability. In particular, one has to \emph{assume} the existence of an unstable eigenpair and its analytic dependence on the parameter $\alpha$. This assumption allows for an asymptotic expansion of the eigenpair in $\alpha$, but the compatibility conditions required to close the problem at order $n$ only emerge at higher orders. Thus, even a finite-order truncation of the expansion requires a delicate analysis.}. Further discussion of related literature will be provided below.

We shall capture the long-wave instability for general shear flows with two different approaches: the first one is an adaptation of Kato's spectral theory, whereas the second one relies on normal-form methods. 
 These approaches prove Theorem \ref{th:mainLin}, but have some technical differences in the precision of the estimates and in the functional spaces considered. Two more precise statements, including more information on the asymptotic expansion of the unstable eigenfunction and on all the stable eigenvalues can be found in \cref{sec:Kato,sezione-forma-normale}. In both methods, we cannot rely on a standard perturbative approach, since the problem is singular when $\alpha\to 0$: for instance, $\boldsymbol{\cL}_{\nu,\alpha}(\sin(x))=\cO(1/\alpha)$. We then have to carefully exploit the particular structure of the problem at hand to obtain spectral instability for $\boldsymbol{\cL}_{\nu,\alpha}$ from a simpler, leading operator, which is also singular in $\alpha$. 
Both strategies introduce flexible techniques that were not used before to study stability of shear flows, and we believe that they can be potentially useful also for tackling other hydrodynamic stability problems beyond ``one-dimensional'' laminar flows as shear flows or vortices.

 \subsection{Comparison of our main theorem  with the previous literature}
 We now review some of the main results available in the literature related to our main Theorem \ref{th:mainLin}.

 \subsubsection{Instability of shear flows}
The study of dynamical properties of perturbations of laminar solutions, such as shear flows, is a foundational in hydrodynamic stability problem \cite{drazin2004hydrodynamic}. Classical contributions were made by Kelvin, Rayleigh, Orr, Heisenberg, C.C. Lin, Tollmien, among many others, between the late nineteenth and early twentieth centuries. For the Navier–Stokes equations, fundamental stability issues are closely tied to the boundary conditions: in the presence of physical boundaries with no-slip conditions at the walls, boundary layers can trigger severe instability mechanisms, see e.g.~\cite{Grenier16,Grenier16Adv,bian2023asymptotic}, as we shall discuss in more detail later on. On the other hand, fluids with periodic boundary conditions and external forcing are frequently considered in physical applications, such as two-dimensional turbulence theories, geophysical models, and molecular dynamics \cite{boffetta2012two,howard2020stability}. Motivated by the connection with turbulence, Kolmogorov in the 1950s \cite{arnol1960kolmogorov} proposed, within the same framework as the present paper, to investigate the behavior of perturbations of the shear flow $U(y)=\sin(y)$ for different values of $\alpha$ and $\nu$\footnote{According to \cite{arnol1960kolmogorov}: ``He hypothesized that for small $\nu$, a turbulent solution should appear (in the sense that there exists a non-trivial invariant measure $\mu_\nu$ in the space $(v_1,v_2)$) and that $\mu_{\nu} \to \mu_0$, where the limiting measure is concentrated on continuous functions.''}. 
The stability properties of this \textit{Kolmogorov flow} were first investigated in the pioneering work of Meshalkin and Sinai \cite{Mehsalkin61Investigation}: using continued-fraction methods, they proved that the flow is spectrally stable when $\alpha>1$ and unstable for sufficiently small $\alpha$, depending on $\nu>0$. Yudovich \cite{Yudovich66} subsequently studied a long-wave instability mechanism for general shear flows: by formally expanding in $\alpha$ (see also \cite{revina2017stability}), he predicted the instability expressed in \cref{intunstableeigenvalue}. This long-wave instability was later confirmed numerically, as shown for instance in \cite{green1974two,howard2020stability,okamoto1993bifurcation}. To the best of our knowledge, these general instability phenomena for the 2D Navier–Stokes equations in periodic domains have not been investigated further from a rigorous mathematical perspective, and our Theorem \ref{th:mainLin} appears to provide the first rigorous justification of the predictions in \cite{Yudovich66}.

In the presence of physical boundaries with no-slip velocity conditions—namely, in domains such as $(x,y)\in \RR\times \RR_{+}$ or $\RR\times [0,2]$ with $v|_{x=0}=(0,0)$—important breakthroughs were achieved by Grenier, Guo, and Nguyen \cite{Grenier16,Grenier16Adv}, and more recently revisited by Bian and Grenier \cite{bian2023asymptotic}. Their work provided a rigorous justification of several results previously predicted in the classical applied literature. In particular, their spectral instability theorem applies to a broad class of shear flows within a suitable range of physical parameters. Remarkably, even flows known to be stable for the Euler equations ($\nu=0$) become unstable at large but finite Reynolds numbers in a long-wave regime, highlighting the strongly destabilizing role of viscosity through the formation of boundary layers. These instabilities are spatially concentrated near the boundary and occur, in our notation, when $1\ll \alpha\approx\nu^{-p}$ for an appropriate range of exponents $0<p_0\leq p\leq p_1<1$, where $p_0$ and $p_1$ depend only on the shear profile $U(y)$.

For the 2D Euler equations, different instability mechanisms are available even in the absence of a long-wave regime. A first necessary condition for instability is Rayleigh’s criterion \cite{drazin2004hydrodynamic}, which requires the presence of an inflection point, i.e.~$U''(y_c)=0$ for some point $y_c$ in the domain. This condition, however, is not sufficient\footnote{As exemplified by the spectral properties of the Couette flow $U(y)=y$ in a bounded channel $\TT\times [0,1]$.}. Z. Lin \cite{lin2003instability} proved that for a broad class of shear flows (and vortices) with an inflection point, spectral instability does in fact occur. The proof provides a rigorous justification of Tollmien’s procedure, originally proposed in the 1930s. 
In addition, Z. Lin established another general abstract instability criterion in \cite{Lin04CMP}, valid for any steady state of the form $\psi_E=F(\omega_E)$. An application of this result implies the instability of Kolmogorov-type shear flows $U(y)=\sin(my)$ in the domain $\TT\times \TT$, for certain values of $m\in \mathbb{N}$, as shown in \cite{Latushkin18JMFM}. More recently, Z. Lin and C. Zeng \cite{Lin22Zeng}, among other results, generalized and made more quantitative the aforementioned criterion. Building on this framework, Liao, Lin, and Zhu \cite{liao2023stability} applied it to establish instability of certain Cat’s eye type flows.

 \subsubsection{Linear to nonlinear instability}
Once the linear stability properties of a given steady state are understood, it is natural to ask whether linear instability implies nonlinear instability (in an appropriate sense). For the Navier–Stokes equations, Yudovich \cite{yudovich1989linearization} showed that linear and nonlinear stability/instability are indeed related, at least when the velocity is measured in the $L^p(D)$ norm, with $D$ a bounded domain in dimension $n$ and $p\geq n$. Subsequently, Friedlander, Strauss, and Vishik \cite{friedlander1997nonlinear} proved a fairly general result for the Euler equations, establishing that nonlinear instability in $H^s(D)$ with $s>n/2+1$ holds whenever the linearized problem around a given steady state possesses a sufficiently large unstable eigenvalue. As a concrete example, \cite{friedlander1997nonlinear} extended the methods of \cite{Mehsalkin61Investigation} to demonstrate the passage from linear to nonlinear instability in 2D Euler for shear profiles $U(y)=\sin(my)$ on $\TT\times \TT$, with $m>1$ satisfying a Diophantine condition.
Subsequently, Grenier \cite{Grenier00CPAM} established another general criterion for the Euler equations, showing that sufficiently strong linear instability implies nonlinear instability of the velocity in $(L^\infty \cap L^2)(D)$. Later, Z. Lin \cite{Lin04IMRN} extended and refined the results of \cite{friedlander1997nonlinear} for the 2D Euler equations, while Friedlander, Pavlović, and Shvydkoy \cite{Friedlander06CMP} generalized the results of \cite{friedlander1997nonlinear,yudovich1989linearization} to demonstrate the passage from linear to nonlinear instability for the Navier–Stokes equations in $L^p(D)$ for any $1<p<\infty$.
\begin{remark}[Nonlinear long-wave instability]
All the aforementioned results on linear-to-nonlinear instability rely on unstable eigenvalues whose size is at least comparable to that of the most unstable eigenvalue(s). For the Navier–Stokes equations, in view of these results and the standard properties of the Laplacian, the presence of an unstable component in the spectrum of $\boldsymbol{\cL}_{\nu,\alpha}$ suffices to guarantee a nonlinear instability statement. However, we do not characterize the full spectrum outside the long-wave regime (in particular, for $\delta_0\nu\leq \alpha|k|\leq 1$), and thus it remains unclear whether the eigenvalues identified in Theorem \ref{th:mainLin} are indeed the most unstable ones. Consequently, existing results cannot be directly applied to promote the linear instability established in Theorem \ref{th:mainLin} to a nonlinear instability statement. More precisely, one would like to track the expected exponential growth of the eigenfunction in \cref{intunstableeigenvalue}, over a time scale on which the nonlinear correction remains negligible.
\end{remark}

\subsubsection{Stability of shear flows in tori}

We also briefly mention related results on the stability of shear flows, a topic that has received considerable attention in recent years. The main focus has been on understanding quantitative stability properties, such as \textit{inviscid damping} and \textit{enhanced dissipation}. Inviscid damping refers to the weak convergence, as $t\to \infty$, of vorticity perturbations toward an $x$-independent state in the inviscid regime, whereas enhanced dissipation describes exponential convergence toward a simpler $x$-independent diffusive dynamics on subdiffusive time scales. 
For the Kolmogorov flow in domains $\TT_\alpha\times \TT$ with $\alpha\geq 1$, both linear and nonlinear problems have been studied in \cite{Wei20Linear, Lin19Metastability, wei2019enhanced, Ibrahim19Pseudo, CotiZelati23Stationary}. More recently, Beekie, Chen, and Jia \cite{beekie2024uniform} addressed enhanced dissipation for more general shear flows with $\alpha>1$ under suitable spectral assumptions. 
The mathematical interest in these stability properties intensified after the breakthrough of Bedrossian and Masmoudi \cite{Bedrossian15PIHES}, who proved nonlinear inviscid damping for the Couette flow in $\TT\times \RR$. Many other notable stability results for shear flows in periodic channels or vortices have also been obtained, e.g.~\cite{Ionescu23Acta, Masmoudi24Annals, Bedrossian19Ann}, though we do not discuss them further here, as they fall outside the main focus of this paper.

Let us conclude our comparison with the previous literature with some remarks related to our main result Theorem \ref{th:mainLin}.
\begin{remark}[On the stability for $\alpha|k|>1$]
    Considering the operator $    \boldsymbol{\cL}_{\nu,\alpha}$ in the class of functions $\{e^{ikx}f(y)\, :\, f\in L^2(\TT)\}$ with $\alpha|k|>1 $ (the opposite of our regime in \cref{hyp:U}), a trivial scaling would put ourselves in the same setting of \cite{beekie2024uniform}. Thus, one can conclude the same stability statements of \cite{beekie2024uniform}, provided that the shear flow under consideration satisfies their hypothesis.
\end{remark}
\begin{remark}[On Taylor dispersion]
\label{rem:Taylorintro}
    Considering the operator $    \boldsymbol{\cL}_{\nu,\alpha}$ without the term involving $U''$, we recover the advection-diffusion operator
    \begin{align}
    \label{def:AdvDif}
\boldsymbol{\mathcal{T}}_{\nu,\alpha}=\nu\Delta_{\alpha}-\alpha U(y)\de_x.
    \end{align}
This operator naturally arises when considering a passive scalar advected by the shear flow $(U(y),0)$ and undergoing standard molecular diffusion. For the operator $\boldsymbol{\mathcal{T}}_{\nu,\alpha}$ in the regime $\alpha \ll \nu \leq 1$, there is a well-known stability mechanism called \textit{Taylor dispersion} \cite{taylor1953dispersion}. The interplay between advection and diffusion results in the convergence towards a simpler large-scale state on a time scale of $\cO(1)$, which is much faster than the standard diffusive time scale $\cO(\nu^{-1})$. Moreover, one can identify an effective diffusion equation modeling the evolution of averaged quantities.\footnote{In fact, Taylor dispersion can also be seen as a classical example of homogenization.}

This stability mechanism was also recently revisited and rigorously studied in \cite{coti2023enhanced,Beck20ARMA}. In particular, denoting by $\boldsymbol{\mu}_{\nu,\alpha,k}$ the largest eigenvalue of $\boldsymbol{\mathcal{T}}_{\nu,\alpha}$ in the class $\{e^{ikx} f(y)\, :\, f \in L^2(\TT)\}$ (in analogy with \cref{intunstableeigenvalue}), one expects
\begin{equation}
   \boldsymbol{\mu}_{\nu,\alpha k} \approx - \frac{(\alpha k)^2}{\nu} \Big(\nu^2 + \frac{C_U}{\nu}\Big),
\end{equation}
where $C_U>0$ is a constant depending only on $U$. This spectral property provides a way to quantify the effective diffusion coefficient. In \cite{Beck20ARMA}, for a passive scalar in an infinite pipe, it was proved that $C_U=\|\partial_y^{-1}U\|_{L^2}^2$. In our setting, by adapting Kato’s spectral approach from \cref{sec:Kato}, one can likewise show that $C_U=\|\partial_y^{-1}U\|_{L^2}^2$, as explained in more detail in Remark \ref{rem:Taylor}.\\ \indent
This highlights that for both operators $\boldsymbol{\cL}_{\nu,\alpha}$ and $\boldsymbol{\mathcal{T}}_{\nu,\alpha}$, the largest eigenvalue of $\nu\Delta_\alpha$, that is  $-\nu (\alpha k)^2$, is modified by a factor involving $\|\partial_y^{-1}U\|_{L^2}^2$. However, the presence of the term $U''\Delta_\alpha^{-1}$ in $\boldsymbol{\cL}_{\nu,\alpha}$ introduces a much more singular structure when $\alpha \ll 1$, which dramatically changes the spectral properties. In this regime, the interplay between diffusion and the Rayleigh operator gives rise to the destabilizing mechanism described in Theorem \ref{th:mainLin}.
\end{remark}

\subsection{Two approaches to instability}
\label{sec:Katonormal}

Before discussing the ideas behind the two approaches, we make some preliminary simple reductions that are common to both of them. 
First of all, observe that the operator $    \boldsymbol{\cL}_{\nu,\alpha}$ \cref{def:Lalphanu} decouples $x$-frequencies, meaning that we can take the Fourier transform in $x$ and study the $k$-by-$k$ operator 
\begin{align}
\cL_{\nu,\alpha k}\coloneqq \nu(\de_{yy}-\alpha^2k^2)-\im\alpha k\big(U(y)-U''(y)(\de_{yy}-\alpha^2k^2)^{-1}\big),
\end{align}
where $\cL_{\nu,\alpha k}:H^2(\TT)\to L^2(\TT)$.
Then, since our aim is to understand the long-wave instabilities, from now on we denote 
\begin{align}
    \eps%=\eps_k
    \coloneqq\alpha |k|.
\end{align}
Without loss of generality, we can assume that $k>0$. Indeed, it is not hard to check that for any eigenvalue $\lambda$ of $\cL_{\nu,\alpha k}$ with $k>0$, then $\overline{\lambda}$ is an eigenvalue of $\cL_{\nu,-\alpha k}=\overline{\cL_{\nu,\alpha k}}$. We then study the onset of instability, as $\eps \neq 0$, of the operator
\begin{equation}\label{cLnueps}
    \cL_{\nu,\eps} \coloneqq \cM_{\nu,\eps} - \im \eps \cR_\eps \,.
\end{equation}
where, with the definitions of $\Pi_0$ (the $y$-average) and $\Pi_{\neq}=\mathrm{Id}-\Pi_0$ given in \cref{def:Pi0}, we define
\begin{align}
\label{cD}&\cD_\eps\coloneqq\de_{yy}-\eps^2,\\
\label{cMcR}
&\cM_{\nu,\eps}\coloneqq\nu \cD_\eps - \frac{\im}{\eps} U''(y) \Pi_0 \, , \quad \cR_{\eps}\coloneqq U(y)+U''(y)(-\de_{yy}+\eps^2  )^{-1} \Pi_{\neq}.
\end{align}
Here, $\cM_{\nu,\eps}$ plays the role of the leading order operator with a simple structure, and we will show that $\sigma_{L^2}(\cM_{\nu,\eps})=\nu \sigma_{L^2}(\cD_\eps)$. Note that $\cM_{\nu,\eps}$ cannot be regarded as a perturbation of the standard $\cD_{\eps}$: it includes a term that becomes singular as $\eps \to 0$, but which acts only on functions with nontrivial average in $y$. 
 Instead,
$\cR_{\eps}$ is a Rayleigh type operator,  bounded  in $L^2(\TT)$, where the effects of the $0$-th mode in $y$ are removed.

\emph{The goal of both approaches is to transfer information from the spectrum of 
 $    {\cM}_{\nu,\eps}$ to the full $    {\cL}_{\nu,\eps}$, keeping in mind that both operators are singular as $\eps \to 0$. }
 We perform this either via Kato's spectral techniques to construct a suitable isomorphism, or by employing normal-form methods to construct an operator that ``block-diagonalizes'' $    {\cL}_{\nu,\eps}$. To tackle the singularity as $\eps \to0$, both methods take advantage of a crucial cancellation happening when the singular part of $\cM_{\nu,\eps}$ 
hits $\cR_\e$, based on the fact,  shown in Lemma \ref{lem:fundamentalcancelation}, that 
\begin{equation}
\label{eq:crucialintro}
\Pi_0\cR_\eps=\cO(\eps^2). 
\end{equation}

 \subsubsection{\textbf{Kato reduction}}
 The key point in the classical Kato perturbation theory \cite{Kato}, 
 % In the classical Kato perturbation theory \cite{Kato} one would like to see $\cL_{\nu,\eps}$ as a perturbation of a simpler operator. 
 % Then, the main step of Kato's method 
 is the transfer of information from a spectral projection $Q$, associated with the leading operator $\cM_{\nu,\eps}$, to a spectral projection $P$ of the full operator  $\cL_{\nu,\eps}$, where
\begin{equation}
\label{RieszIntro}
    Q \coloneqq -\frac{1}{2\pi\im}\oint_{\Gamma} (\cM_{\nu,\e}-\lambda)^{-1} \drm \lambda\,,\quad P \coloneqq -\frac{1}{2\pi\im}\oint_{\Gamma} (\cL_{\nu,\e}-\lambda)^{-1} \drm \lambda\,,
\end{equation}
and $\Gamma$ is a loop in the common resolvent set $\rho_{L^2}(\cM_{\nu,\e})\cap \rho_{L^2}(\cL_{\nu,\e})$ that encircles an isolated eigenvalue of $\cM_{\nu,\e}$. 
%, whose spectrum is completely described in \cref{unperturbedspectrum}. 
 In previous applications of Kato theory the unperturbed operator was either a normal operator, see e.g.\ \cite{Kappeler1991}, or a Fourier multiplier, see e.g.\ \cite{BMV, BMV1,BMV2, BCMV, bianchini2025instabilities}. Here a first difficulty is that the operator $\cM_{\nu,\e}$ does not possess, as a whole, these particular properties. Our analysis will then rely crucially on the fact that the term of size $\eps^{-1}$ in $\cM_{\nu,\eps}$ is a rank-$1$ update of  the self-adjoint Fourier multiplier $\nu\cD_\eps$ in \cref{cD}. This structure enables us to compute its resolvent via a Sherman–Morrison type formula \cref{originalShermanMorrison}, despite the singular behavior as $\eps \to 0$.
A second difficulty arises when one tries to apply the Kato scheme as it has been developed in previous literature. Indeed, a standard perturbative approach requires a bound of the form 
\begin{equation}\label{hopelessestimate}
    \| P - Q\|_{\cB(L^2,L^2)} = o(1)\,,
\end{equation}
in a regime of the parameters where $\cL_{\nu,\e}$ is sufficiently close to $\cM_{\nu,\e}$. In our case, an estimate of the form \cref{hopelessestimate} is hopeless, because also $P-Q$ includes a large rank-$1$ operator of size $\cO(\nu^{-2})$, see Lemma \ref{lem:P-Q} below. Nonetheless, since terms of order $\nu^{-2}$ will always contain the projection $\Pi_0$, we are able to invert the operator
\begin{equation}\label{Katocondition}
    \uno - (P-Q)^2 = \big( \uno - (P-Q) \big) \big( \uno + (P-Q) \big) \, ,
\end{equation}
by relying on  \cref{eq:crucialintro} and the fact that each of its components is a rank-$1$ update of an isomorphism close to the identity (an operator of the form $\uno + o(1)$). Thus, we can apply again the Sherman-Morrison type formula \cref{originalShermanMorrison} to invert said rank-$1$ updates. We stress that while \cref{hopelessestimate} is only a \emph{sufficient condition}, the invertibility of \cref{Katocondition} is \emph{equivalent} to proving that 
the operator $\uno - P - Q$
%, 
is an isomorphism between the ranges of $Q$ and $P$ (and between their kernels as well). Once this isomorphism is constructed, thanks to the properties of $\cM_{\nu,\eps}$ and the projections we know that eigenvalues of $\cL_{\nu,\eps}$ remain sufficiently close.  Then, the final outcome of Kato's method in our setting is described in  Lemma \ref{lem:katoeigen} below, containing a description of the unstable eigenvector --as well as a suitable decomposition of the complementary stable hyperplane into a direct sum of two-dimensional invariant subspaces-- of $  \cL_{\nu,\eps}$  in terms of the eigenvectors of $\cM_{\nu,\eps}$ and the spectral projections. 
Finally, we observe that the boundedness of the perturbation operator 
$\cR_\e$ makes us able to  characterize the whole\footnote{see \cite{BCMV} for a case in which an unbounded perturbation keeps from describing the spectrum of a Hamiltonian operator outside fixed horizontal strips in the complex plane.} spectrum of $\cL_{\nu,\e}$ for fixed values of $\e,\nu>0$ when $\eps$ is sufficiently small.

The expansion of the unstable eigenvalue and eigenfunction \cref{unstableeigenvalue} is then computed in \cref{sec:unstable}.

\subsubsection{\textbf{Normal-Form reduction}}
We now describe the normal-form approach appearing in \cref{sezione-forma-normale} for the analysis of the spectrum of the linear operator ${\mathcal L}_{\nu, \varepsilon}$ given in \cref{cLnueps,cD,cMcR}. Normal-form methods have been fruitfully used in Fluid Mechanics in order to construct periodic and quasi-periodic traveling-wave solutions of Euler and Navier-Stokes equations in several situations: see for instance \cite{BaldiMontalto,FMVV,BFMT} for the Euler equations on rectangular tori with quasi-periodic forcing terms, \cite{FMM} for quasi-periodic traveling waves near Couette flows and \cite{vortex1,vortex2,vortex3,vortex4} for quasi-periodic vortex patches. In the aforementioned papers, one deals with the so-called problem of small divisors, namely the fact that the linearized equation at the origin has spectrum accumulating to zero. The normal-form techniques are used essentially to integrate the linearized equations at any approximate solutions and for giving a sharp analysis of the spectrum of such operators. This methods allow to construct a bounded and invertible transformation (on Sobolev spaces) that molds the linearized operator into a much simpler one with a prescribed structure, i.e.\ in most cases a diagonal or block-diagonal operator. Consequently, the spectrum of the transformed operator is easily studied. In this case, we will consider $\cM_{\nu,\eps}$ as the unperturbed operator whose spectrum is equal to the one of $\nu\cD_{\eps}$.  Taking into account the ``double resonances" of  $\nu \cD_{\eps}$ on $L^2(\TT)$ ($-\nu\eps^2$ is a simple eigenvalue and $- \nu (j^2+\eps^2)$, $j \neq 0$, is double), we block diagonalize the operator ${\mathcal L}_{\nu, \varepsilon}$ and we compute sharp asymptotic expansion of its eigenvalues. In particular, we obtain that the perturbed eigenvalue $\lambda^{(0)}_{\nu, \varepsilon}$, bifurcating from $j=0$, is simple and has positive real part, whereas for any $j \in\NN $, there are two (or one double) perturbed eigenvalues $\lambda^{(\pm j)}_{\nu, \varepsilon}$ which are $\varepsilon$-close to $- \nu j^2$. The major difficulty is again the presence of a non-perturbative term in ${\mathcal L}_{\nu, \varepsilon}$. In particular, $\frac{\im}{\eps} U''(y) \Pi_0$ in ${\mathcal M}_{\nu, \varepsilon}$ (see \cref{cMcR}) has size $\cO(\varepsilon^{- 1})$. The key point to deal with this term is that this operator is rank-1 and nilpotent.
We describe more in detail our strategy, which is split in two parts: 
\begin{enumerate}[label=(\roman*)]
\item First, in \cref{normal-form-modo-zero}, we decouple the unstable direction of $\cL_{\nu,\e}$ from its stable invariant hyperplane.  In this way, we isolate the unstable eigenvalue and obtain its asymptotic expansion.
\item Then, in \cref{block-diagonalization-stable-part}, we study the restriction of $\cL_{\nu,\e}$ to the stable hyperplane obtained in part $(\mathrm{i})$. This operator is a perturbation of size $\cO(\varepsilon)$ of $\nu \partial_{yy}$ --restricted to Fourier modes different from zero--
 and it can be transformed into a new operator with only $2\times 2$ diagonal blocks --related to the double eigenvalues of its leading part--. The block-diagonalization follows by a fixed-point argument, which we can implement because of the absence of small divisors.
\end{enumerate}

\medskip
\noindent 
{{\bf \textrm{(i)}} \textit{Decoupling of the unstable direction.}} 
To study $\cL_{\nu,\eps}$, we can use the natural direct sum decomposition of $H^s(\TT)=\mathbb{C}\oplus H^s_{0}(\TT)$, with $s\geq 2$ and $H^s_0(\TT)$ being zero-average functions, to represent  the operator as the matrix
$$
\mathtt{L}_{\nu,\eps}\coloneqq \begin{bmatrix}
\Pi_0 {\mathcal L}_{\nu, \varepsilon} \Pi_0 & \Pi_0 {\mathcal L}_{\nu, \varepsilon} \Pi_{\neq} \\
\Pi_{\neq} {\mathcal L}_{\nu, \varepsilon} \Pi_0 & \Pi_{\neq} {\mathcal L}_{\nu, \varepsilon} \Pi_{\neq}
\end{bmatrix}.
$$
By properties of the projection $\Pi_0$, %we can identify
one can think of $\Pi_0 {\mathcal L}_{\nu, \varepsilon} \Pi_{\neq}$ and $\Pi_{\neq}{\mathcal L}_{\nu, \varepsilon} \Pi_0 $ as an infinite dimensional row and column vector respectively. The part with $\Pi_0\cL_{\nu,\eps}\Pi_0$ does instead encode the simple eigenvalue associated to $\cM_{\nu,\eps}$, which is the one being perturbed to generate the instability. To capture this, we aim at simplifying the structure of the matrix ${\mathtt L}_{\nu,\eps}$ by conjugating it with a  matrix ${\mathtt T}$ so that 
\begin{equation}
\label{eq:introNF}
    {\mathtt T} \, {\mathtt L}_{\nu, \varepsilon} {\mathtt T}^{- 1} = \begin{bmatrix}
\lambda_{\nu,\eps}^{(0)} & \boldsymbol{0}^\top \\
\boldsymbol{0} & {\mathcal L}_{\nu, \varepsilon}^{(1)}
\end{bmatrix}.
\end{equation}
Here $\lambda_{\nu,\eps}^{(0)}$ is exactly the eigenvalue in Theorem \ref{th:mainLin} and the unstable direction is indeed $(1,\boldsymbol{0})^{\top}$ in the new basis, hence the desired decoupling would be achieved. Instead, $\cL_{\nu,\eps}^{(1)}$ is a is a suitable modification of $\cL_{\nu,\eps}$ acting on $H^s_0(\TT) $, accounting for the stable part of the operator discussed afterwards. To justify this decoupling of the unstable direction, it remains to find ${\mathtt T}$ (and block-diagonalize $\cL_{\nu,\eps}^{(1)}$). Inspired by the finite dimensional case, we make an ansatz of the form
$$
{\mathtt T} \coloneqq  \begin{bmatrix}
1 & X^\top \\
Y & {\rm Id}
\end{bmatrix}\,,
$$
where $X,Y$ are two $H^{s}_0(\TT)$ functions and ${\mathtt T}$ is invertible 
if $\langle X, Y \rangle_{L^2(\TT)} \neq 1$  (see Lemma \ref{invertibilita mathtt T}). To obtain the desired structure after the conjugation, one has to solve quadratic expressions for both $X$ and $Y$, which is done by a fixed-point argument in Lemmas \ref{lem:vheq} and \ref{lem:hheq}. The outcome is that 
$$
X = \cO\big( \frac{\e^3}{\nu} \big) \qquad \text{and} \qquad Y = \cO\big( \frac{1}{\nu \e} \big)\,,
$$
which imply that our transformation ${\mathtt T} = {\rm Id} + \cO(\varepsilon^{- 1})$ is ``far from the identity'', reflecting from the non-perturbative nature of the problem (compare with the need of applying the Shermann-Morrison formula in Kato's approach).
To compute the desired expansion of $\lambda_{\nu,\eps}^{(0)}$, we have to precisely expand the quadratic form involving $X,Y$ that is defining the eigenvalue, which is done in Lemma \ref{primo coniugio normal-form}.

\medskip

\noindent

{{\bf \textrm{(ii)}}. \it Block-diagonalization of the stable part.} Having at hand the decomposition in \cref{eq:introNF}, we need to block-diagonalize the  operator $\cL_{\nu,\eps}^{(1)}$, where the notion of a block-diagonal operator is the natural one based on the standard Fourier basis, see \cref{def:blockdiagonal}. A first technical step is to introduce a suitable notion of block-diagonal decay norm $|\cdot|_s$, that  provide us with a nice functional framework, see \cref{def-decay-norm} and the properties listed in the Lemmas below. For regular enough shear flow profiles, we are allowed to write  
\begin{equation}
    \cL_{\nu,\eps}^{(1)}=\nu\de_{yy}+{\mathcal Q}, \qquad |\cQ|_s=\cO(\eps).
\end{equation}
Here $\cQ:H^s_0(\TT)\to H^s_0(\TT)$ is a linear operator depending on $X,Y$ defined before. In this formulation the problem becomes amenable to the application of the usual strategy in normal forms methods. Namely, whenever $\varepsilon\ll \nu $, we can  construct by a fixed point argument 
a map $\Psi:H^s_0(\TT)\to H^s_0(\TT)$ with $|\Psi|_s = \cO(\varepsilon \nu^{- 1})$ such that 
$$
({\rm Id} + \Psi)^{- 1} {\mathcal L}_{\nu, \varepsilon} ({\rm Id} + \Psi) = \nu \partial_{yy} + {\mathcal Z}
$$
where $|{\mathcal Z}|_s=\cO(\varepsilon)$ and is  \emph{block-diagonal} with $2\times 2$ blocks. This is proved in Proposition \ref{prop-block-diag-contraction}. Thanks to this structure, we can easily conclude that the spectrum of $\nu \partial_{yy} + {\mathcal Z}$ is the union of the spectra of each block-diagonal piece, which are given by $\{ \lambda_{\nu,\eps}^{(j)}\}_{ j \in \ZZ\setminus\{0\} }$ with $\lambda_{\nu,\eps}^{(j)} = - \nu j^2 + \cO(\varepsilon)$. Hence all the eigenvalues of ${\mathcal L}_{\nu, \varepsilon}^{(1)}$ have negative real parts and the stable part is also fully characterized.

\subsection{Notation}
Given a function $f\in L^2(\TT)$, its Fourier decomposition is given by 
\begin{equation}\label{FourierCoefficients}
    f(y)=\sum_{j\in \ZZ}f_{j}e^{ijy}, \qquad f_{j}\coloneqq \frac{1}{2\pi}\int_{\TT}e^{-\im jy}f(y) \dd y.
\end{equation}
In particular we introduce the orthogonal projections
\begin{equation}
\label{def:Pi0}
\Pi_0f\coloneqq f_0=\frac{1}{2\pi}\int_{\TT}f(y)\dd y, \qquad \Pi_{\neq}\coloneqq \uno-\Pi_0\, ,
\end{equation}
and, for every $j \in {\mathbb N}$,
\begin{equation}
\label{def:Pij}
\Pi_j f(y) \coloneqq   f_j e^{\im j y} +  f_{-j} e^{-\im j y}\,, 
\end{equation}
so that $f = \Pi_0 f + \sum_{j \in {\mathbb N}} \Pi_j f
$.

Given a closed linear operator ${\mathcal R} : \mathrm{D}(\cR) \subseteq L^2(\TT) \to L^2(\TT)$, with $C^\infty(\TT) \subseteq \mathrm{D}(\cR) $, we can define its matrix elements as
$$
{\mathcal R}_j^{j'} \coloneqq  \langle {\mathcal R}[e^{i j' y}]\,,\, e^{i j y} \rangle_{L^2}, \quad \forall j, j' \in {\mathbb Z}\,.
$$
Furthermore, one has  
\begin{equation}
\label{def:Rij}
    {\mathcal R} = \sum_{j, j' \geq 0} \Pi_j {\mathcal R} \Pi_{j'}
\end{equation}
and for any $j, j' \in {\mathbb N}$, we can identify the operator $\Pi_j {\mathcal R} \Pi_{j'}$ with the $2\times 2$ matrix
$$
\Pi_j {\mathcal R} \Pi_{j'} \equiv \begin{pmatrix}
{\mathcal R}_j^{j'} & {\mathcal R}_j^{- j'} \\
{\mathcal R}_{- j}^{j'} & {\mathcal R}_{- j}^{- j'}
\end{pmatrix}\,. 
$$
In particular, the operator ${\mathcal R} $ is said to be  \emph{block-diagonal} when
\begin{equation}
\label{def:blockdiagonal}
\Pi_j {\mathcal R} \Pi_{j'} = 0, \quad \forall j, j' \geq 0, \quad j \neq j'\,. 
    \end{equation}

In the sequel, given a bounded linear operator $\cT:X\to Y$, with $X,Y$ being two normed spaces, we denote its standard operator norm as
\begin{equation}
    \|\cT\|_{\cB(X,Y)}\coloneqq \sup_{\|x\|_X=1}\|\cT x\|_Y.
\end{equation}

\medskip

\noindent
{\bf Acknowledgements} M. Colombo and M. Dolce were supported by the Swiss State Secretariat for Education, Research and Innovation (SERI) under contract number MB22.00034 through the project TENSE.  M. Dolce was supported also by the Swiss National Science Foundation (SNF Ambizione grant PZ00P2\_223294).
R. Montalto and P. Ventura  are  supported by the ERC STARTING GRANT 2021 “Hamiltonian Dynamics, Normal Forms and Water Waves” (HamDyWWa), Project Number: 101039762.

 The Views and opinions expressed are however those of the authors only and do not necessarily reflect those of the European Union or the European Research Council. Neither the European Union nor the granting authority can be held responsible for them.

\section{Linear instability via Kato  approach}
\label{sec:Kato}
 In this section, we aim at presenting the proof based on Kato's approach of the instability mentioned in Theorem \ref{th:mainLin}. The goal is the complete characterization of the spectrum of the operator $\cL_{\nu,\e}$ on $L^2(\TT)$. Note that in this section it is enough to assume that the shear profile $U(y)$ is in $C^2(\TT)$. This is clearly implied by the hypotheses of Theorem \ref{th:mainLin}, since $H^3(\TT)$ is compactly embedded in $C^2(\TT)$. The main result of this section is the following
\begin{theorem}\label{thm:linmain} Let $U\in C^2(\TT)$ be a shear flow profile with $0$ average. Then there exists constants $\delta_0\in (0,1)$ and $C>1$, depending only on $\|U\|_{C^2}$, such that, for every $\nu, \eps>0$ with
 $\eps\leq \delta_0\nu$, the following holds true for the operator $\cL_{\nu,\eps}$ defined in \cref{cLnueps}. 
 % \begin{enumerate}
 %     \item 
 %     \item
 % \end{enumerate}
 There exists a simple  eigenvalue $\lambda^{(0)}_{\nu,\eps}\in \sigma_{L^2}(\cL_{\nu,\eps})$ which admits the following asymptotic expansion in $\eps$ 
\begin{equation}\label{unstableeigenvalue}
\lambda^{(0)}_{\nu,\e} = \frac{\e^2}{\nu}\left( \|\de_y^{-1} U \|_{L^2}^2 - \nu^2 +\cO\big(\eps(\nu+\nu^{-1})\big)\right)
\end{equation}
%\maria{qui non c'e' 1+...}
associated with an eigenvector of the form
%\footnote{un'espansione più precisa non serve per \cref{unstableeigenvalue} ma si può calcolare.}
\begin{equation}\label{unstableeigenvector}
   \| V_{\nu,\e}^{(0)}(y)  -  (U(y)-\im \nu \eps)\|_{L^2}  \leq C \frac\eps\nu% \widetilde{v}_{\nu,\eps}(y) \, , \qquad \|\widetilde{v}_{\nu,\eps}\|_{L^2}\leq C
   .
\end{equation}
%In particular, $\Re(\lambda^{(0)}_{\nu,\eps})>0$ whenever $\|\de_y^{-1}U\|_{L^2}>\nu$ and $\eps_0$ is sufficiently small.
In general%\footnote{volendo si può calcolare anche il resto dello spettro o stimare $\big|\lambda^{(j)}_{\nu,\e} + \nu j^2 \big|  < C \e$.}
, we have
\begin{equation}\label{restofthespectrum}
   \sigma_{L^2} \big(\cL_{\nu,\e} \big) = \{\lambda_{\nu,\eps}^{(j
   )}\}_{j\in \ZZ} , \qquad |\lambda_{\nu,\eps}^{(j)}+\nu(j^2+\eps^2)|\leq \frac{\nu}{2}.
   % \{ \lambda_{\nu,\e}^{(j)}\;:\; j\in\ZZ\}\, ,\qquad \textup{where}\quad\big|\lambda^{(j)}_{\nu,\e} + \nu j^2 \big|  < C \e\,,\quad \forall j\in \ZZ\setminus\{0\}\, .
\end{equation}
%\maria{questo e' dimostrato?}
\end{theorem}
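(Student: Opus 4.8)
The plan is to derive the spectrum of $\cL_{\nu,\eps}=\cM_{\nu,\eps}-\im\eps\cR_\eps$ from that of the ``leading'' operator $\cM_{\nu,\eps}$ by the Riesz--projection (Kato) scheme outlined in \cref{sec:Katonormal}, taking advantage of the fact that the $\cO(\eps^{-1})$ part of $\cM_{\nu,\eps}$ is a rank-one, nilpotent update of the self-adjoint Fourier multiplier $\nu\cD_\eps$. Since $U$ has zero average, so does $U''$, hence $-\tfrac{\im}{\eps}U''\Pi_0$ is rank-one with $\Pi_0 U''=0$; from this one checks directly that $\sigma_{L^2}(\cM_{\nu,\eps})=\nu\,\sigma_{L^2}(\cD_\eps)=\{-\nu(j^2+\eps^2)\}_{j\geq0}$, with $-\nu\eps^2$ a \emph{simple} eigenvalue (eigenfunction $1+\tfrac{\im}{\nu\eps}U$, i.e.\ $U-\im\nu\eps$ after rescaling) and each $-\nu(j^2+\eps^2)$, $j\geq 1$, of algebraic multiplicity two (eigenfunctions $e^{\pm\im j y}$). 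A Sherman--Morrison computation (cf.\ \cref{originalShermanMorrison}) then gives the resolvent of $\cM_{\nu,\eps}$ in closed form: the rank-one denominator equals $1$ identically, because $\Pi_0(\nu\cD_\eps-\lambda)^{-1}U''=0$, so
\[
(\cM_{\nu,\eps}-\lambda)^{-1}=(\nu\cD_\eps-\lambda)^{-1}+\frac{\im}{\eps(-\nu\eps^2-\lambda)}\,\bigl[(\nu\cD_\eps-\lambda)^{-1}U''\bigr]\,\Pi_0\,,
\]
whose second, rank-one, term is of size $\cO(\eps^{-1}\nu^{-2})$ on contours lying at distance $\gtrsim\nu$ from $\sigma_{L^2}(\cM_{\nu,\eps})$.

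Next I would fix separating loops: $\Gamma_0$, a circle of radius $\sim\nu/2$ centred at the origin, which encircles only the eigenvalue $-\nu\eps^2$ of $\cM_{\nu,\eps}$ (and, by \cref{hyp:U} with $\delta_0$ small, also the expected perturbed eigenvalue $\tfrac{\eps^2}{\nu}(\|\de_y^{-1}U\|_{L^2}^2-\nu^2)$), together with circles $\Gamma_j$ of radius $\sim\nu/4$ around each $-\nu(j^2+\eps^2)$, $j\geq 1$. All of them lie in $\rho_{L^2}(\cM_{\nu,\eps})$ with $\|(\cM_{\nu,\eps}-\lambda)^{-1}\|_{\cB(L^2,L^2)}=\cO(\eps^{-1}\nu^{-2})$ there. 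To put $\Gamma_j$ in $\rho_{L^2}(\cL_{\nu,\eps})$ I factor $\cL_{\nu,\eps}-\lambda=(\cM_{\nu,\eps}-\lambda)\bigl(\uno-(\cM_{\nu,\eps}-\lambda)^{-1}\im\eps\cR_\eps\bigr)$ and invert by Neumann series. The naive bound on the second factor is only $\cO(\nu^{-2})$, coming from the singular rank-one piece of the resolvent; the key observation is that this piece always carries the factor $\Pi_0\cR_\eps$, which by \cref{lem:fundamentalcancelation} is $\cO(\eps^2)$, so after this cancellation $\|(\cM_{\nu,\eps}-\lambda)^{-1}\im\eps\cR_\eps\|_{\cB(L^2,L^2)}=\cO(\eps/\nu)<1$ whenever $\eps\leq\delta_0\nu$. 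Hence the Riesz projections $P_j\coloneqq-\tfrac{1}{2\pi\im}\oint_{\Gamma_j}(\cL_{\nu,\eps}-\lambda)^{-1}\,\drm\lambda$ of $\cL_{\nu,\eps}$ are well defined.

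I then match the ranges of $Q_j$ (the analogous projection of $\cM_{\nu,\eps}$ on $\Gamma_j$) and $P_j$ through the identity \cref{Katocondition}, reducing the problem to the invertibility of $\uno-(P_j-Q_j)^2$. By \cref{lem:P-Q} the difference $P_j-Q_j$ is \emph{not} small — it contains a rank-one operator of size $\cO(\nu^{-2})$ — but, again thanks to the resolvent structure above and \cref{lem:fundamentalcancelation}, this large part always factors through $\Pi_0$, so each of $\uno\pm(P_j-Q_j)$ is a rank-one update of an operator of the form $\uno+o(1)$ and is invertible by the Sherman--Morrison formula. This yields (\cref{lem:katoeigen}) that $\uno-P_j-Q_j$ is an isomorphism from $\Ran Q_j$ onto $\Ran P_j$ and from $\Ker Q_j$ onto $\Ker P_j$; since $\dim\Ran Q_0=1$ and $\dim\Ran Q_j=2$ for $j\geq 1$, it follows that $\cL_{\nu,\eps}$ has a simple eigenvalue $\lambda^{(0)}_{\nu,\eps}$ inside $\Gamma_0$ and exactly two eigenvalues (with multiplicity) within $\nu/2$ of each $-\nu(j^2+\eps^2)$. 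Because $\cL_{\nu,\eps}$ differs from $\nu\cD_\eps$ — which has compact resolvent — by the bounded operator $\tfrac{\im}{\eps}U''\Pi_0+\im\eps\cR_\eps$, it has compact resolvent as well (resolvent identity), so these loops exhaust the spectrum, which gives \cref{restofthespectrum}; in particular $\Re\lambda^{(j)}_{\nu,\eps}\leq-\nu/2$ for every $j\neq 0$.

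Finally, for \cref{unstableeigenvalue,unstableeigenvector} I would set $V^{(0)}_{\nu,\eps}=P_0(U-\im\nu\eps)$, which is a nonzero element of $\Ran P_0$ since $U-\im\nu\eps$ spans $\Ran Q_0$; using that $P_0-Q_0$ is built from $\im\eps\cR_\eps$ sandwiched between resolvents and that $\Pi_0\cR_\eps=\cO(\eps^2)$ (cf.\ \cref{lem:katoeigen}), one gets $\|V^{(0)}_{\nu,\eps}-(U-\im\nu\eps)\|_{L^2}=\cO(\eps/\nu)$, which is \cref{unstableeigenvector}. Projecting the eigenvalue equation with $\Pi_0$ gives $\lambda^{(0)}_{\nu,\eps}=-\nu\eps^2-\im\eps\,\Pi_0\cR_\eps V^{(0)}_{\nu,\eps}/\Pi_0 V^{(0)}_{\nu,\eps}$; inserting $\Pi_{\neq}V^{(0)}_{\nu,\eps}=\tfrac{\im}{\nu\eps}\bigl(\Pi_0 V^{(0)}_{\nu,\eps}\bigr)U+\cO(\eps)$ — read off from the $\Pi_{\neq}$-component of the equation, where $\nu\cD_\eps$ is invertible with norm $\lesssim\nu^{-1}$ on zero-average functions — together with the explicit form of $\Pi_0\cR_\eps$ from \cref{lem:fundamentalcancelation} produces $\lambda^{(0)}_{\nu,\eps}=\tfrac{\eps^2}{\nu}\bigl(\|\de_y^{-1}U\|_{L^2}^2-\nu^2\bigr)+\cO\bigl(\eps^3(\nu+\nu^{-1})\bigr)$; the bookkeeping of the error terms is carried out in \cref{sec:unstable}. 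The main obstacle throughout is precisely that the classical perturbative estimate $\|P_j-Q_j\|_{\cB(L^2,L^2)}=o(1)$ fails — both the resolvent of $\cM_{\nu,\eps}$ and $P_j-Q_j$ contain rank-one terms blowing up, respectively like $\eps^{-1}$ and $\nu^{-2}$, as $\eps\to0$ — and the whole argument hinges on pairing these against the cancellation $\Pi_0\cR_\eps=\cO(\eps^2)$ and on the fact that the surviving large operators are rank-one updates of near-identity isomorphisms, hence invertible by Sherman--Morrison.
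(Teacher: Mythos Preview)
Your proposal is correct and follows essentially the same Kato--Riesz scheme as the paper: same resolvent formula for $\cM_{\nu,\eps}$ via Sherman--Morrison, same use of the cancellation $\Pi_0\cR_\eps=\cO(\eps^2)$ to make the Neumann series converge on the loops $\Gamma_j$, and the same inversion of $\uno-(P_j-Q_j)^2$ by treating each factor as a rank-one update of a near-identity operator. The one notable difference is how you extract the expansion \cref{unstableeigenvalue}: you project the eigenvalue equation with $\Pi_0$ to obtain $\lambda^{(0)}_{\nu,\eps}=-\nu\eps^2-\im\eps\,\Pi_0\cR_\eps V^{(0)}_{\nu,\eps}/\Pi_0 V^{(0)}_{\nu,\eps}$ and then read off $\Pi_{\neq}V^{(0)}_{\nu,\eps}$ from the complementary equation, whereas the paper in \cref{sec:unstable} instead expands the contour-integral representation of $\cL_{\nu,\eps}P_{\nu,\eps}^{(0)}$ in Neumann series and computes residues term by term; your route is more direct for the leading order but the paper's delivers the error terms in a more mechanical way.
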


The spectral properties encoded in Theorem  \ref{thm:linmain} are illustrated in \cref{fig:SchematicSpectrum}. Observe that the estimate for the location of $\lambda_{\nu,\eps}^{(j)}$ in \cref{restofthespectrum} can be sharpened by replacing $\nu/2$ on the right-hand side with $C\eps$, performing an expansion analogous to the one used in the proof of \cref{unstableeigenvalue}. In the same way, a more accurate description of the corresponding eigenfunctions could also be obtained. Furthermore, with the present method one can explicitly compute higher-order terms in the asymptotic expansion of $\lambda^{(0)}_{\nu,\eps}$ and its associated eigenfunction in \cref{unstableeigenvalue}. Since this is not the main focus of the paper, we omit these details here; they will, however, naturally emerge in the normal-form approach (see Theorem  \ref{teorema coniugio cal Lk}).
The rest of the section is devoted to the proof of Theorem \ref{thm:linmain}.%, which we split in \red{subsections and comment on organization} 
\begin{figure}[h!]
    \centering
    \begin{tikzpicture}[scale=1]

  % Axis
  \draw[->] (-7,0) -- (2,0) node[below] {\(\Re\)};
  \draw[->] (0,-1) -- (0,2) node[left] {\(\Im\)};

  % Spectrum of M_{\nu,\eps} (dots on real axis)
      \filldraw[blue] (-.2,0) circle (2pt);

  \foreach \n in {1,2} {
    \pgfmathsetmacro\x{-1.5*\n*\n - 0.2} % Assume \nu = 1, \eps = 0.5
    \filldraw[blue] (\x,0) circle (3pt);
  }

  % Disks of radius \nu/2 around each eigenvalue of M_{\nu,\eps}
  \foreach \n in {1,2} {
    \pgfmathsetmacro\x{-1.5*\n*\n - 0.2}
    \draw[green!50!black, dashed] (\x,0) circle (.7);
  }
    %two split eigenvalues
    \filldraw[red] (-1.5-.3-0.25,0.25) circle (2pt);
    \filldraw[red] (-1.5-.2+0.25,-0.2) circle (2pt);

    \filldraw[red] (-1.5*4-.2+0.4,0.2) circle (3pt);

  % Isolated unstable eigenvalue of L_{\nu,\eps}
  \filldraw[red] (.3,.25) circle (2pt);
  \node[above right] at (.3,.1) {\(\lambda^{(0)}_{\nu,\eps}\)};
% Unperturbed eigenvalue 
  \node[above] at (-.5,0) {\(-\nu\eps^2\)};

  % Legend
  \node[right] at (-5.4,1.8) {\textcolor{blue}{Spectrum of \(\mathcal{M}_{\nu,\eps}\)}};
  
  \node[right] at (-5.4,1.3) {\textcolor{red}{Spectrum of \(\mathcal{L}_{\nu,\eps}\)}};
\end{tikzpicture}

    \caption{A schematic representation of the spectral picture of the operators $\cM_{\nu,\eps}$ and $\cL_{\nu,\eps}$. Bigger dots correspond to double eigenvalues. The simple eigenvalue $-\nu \eps^2$ of $\cM_{\nu,\eps}$ is mapped in the unstable one $\lambda_{\nu,\eps}^{(0)}$. All other eigenvalues $-\nu(j^2+\eps^2)$ of $\cM_{\nu,\eps}$ are double and they either split in two eigenvalues $\lambda^{(\pm j)}_{\nu,\eps}$ or they form a new double eigenvalue. In both cases, they remain  in a ball of radius $\nu/2$ (which can be refined to a ball of radius $\cO(\eps)$).}\label{fig:SchematicSpectrum}
\end{figure}
% \begin{remark}
% \red{Should we add a remark saying that in reality we know that the $\lambda_{\nu,\eps}^{(j)}$ are in fact encircled in smaller balls? We should see this in the normal-form approach, maybe comment that here we don't do since we don't need to compute all the spectrum explicitly.}
% \end{remark}

\subsection{The operators $\cM_{\nu,\e}$ and $\cR_{\nu,\e}$}
We begin with an analysis of the leading and perturbative components of the operator $\cL_{\nu,\e}$ as in \cref{cLnueps,cMcR}. First of all, we observe that $\cR_\eps$ is bounded from $L^2(\TT)\to L^2(\TT)$ with the explicit bound given by 
    \begin{equation}\label{boundcR}
\|\cR_{\eps}\|_{\cB(L^2,L^2)}\leq \|U\|_{L^\infty}+\|U''\|_{L^\infty}.
    \end{equation}
    Then, the term of order $\eps^{-1}$ in $\cM_{\nu,\eps}$ only acts through the projection onto zero modes $\Pi_0$. This projection, when combined with the perturbative term $\cR_{\eps}$, entails crucial cancellations which will be used in the rest of the paper.
\begin{lemma}\label{lem:fundamentalcancelation} Let $\cR_\eps$ be the operator defined in \cref{cMcR}. Then, for all $f\in L^2(\TT)$, the following holds true
\begin{equation}\label{Cancelation}
    \Pi_0 \cR_\e  f = \e^2 \langle (\de_y+\eps)^{-1} U,(\de_y+\eps)^{-1} \Pi_{\neq} f\rangle_{L^2}.
\end{equation}
In particular, $\Pi_0\cR_\e 1=0$ and 
\begin{equation}\label{estimatePi0cR}
    \|\Pi_0 \cR_\e\|_{\cB(L^2,\RR)} = \e^2 \frac{\| (\de_y+\e)^{-1} U \|^2_{L^2}}{\|U\|_{L^2}} .
\end{equation}
\end{lemma}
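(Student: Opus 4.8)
The plan is to push everything to the Fourier side and isolate the algebraic cancellation responsible for the $\eps^2$ gain. Write $U(y)=\sum_{m\neq 0}U_m e^{imy}$ (the sum runs over $m\neq 0$ because $\Pi_0 U=0$) and, for $f\in L^2(\TT)$, $\Pi_{\neq}f=\sum_{j\neq 0}f_j e^{ijy}$. Since $\cR_\eps f = U f + U''(y)(-\de_{yy}+\eps^2)^{-1}\Pi_{\neq}f$, the first observation is that the multiplication term only ``sees'' $\Pi_{\neq}f$: indeed $\Pi_0[Uf]=\Pi_0[U\,\Pi_0 f]+\Pi_0[U\,\Pi_{\neq}f]=(\Pi_0 f)\,\Pi_0 U+\Pi_0[U\,\Pi_{\neq}f]=\Pi_0[U\,\Pi_{\neq}f]$. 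In particular $\cR_\eps 1=U$ and hence $\Pi_0\cR_\eps 1=\Pi_0 U=0$, which is the second assertion of the lemma.

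Next I would compute the two zero-modes explicitly. Since $e^{imy}e^{ijy}$ contributes to $\Pi_0$ only when $m=-j$, one gets $\Pi_0[U\,\Pi_{\neq}f]=\sum_{j\neq 0}U_{-j}f_j$. For the second term, $(-\de_{yy}+\eps^2)^{-1}\Pi_{\neq}f=\sum_{j\neq 0}\frac{f_j}{j^2+\eps^2}e^{ijy}$ and $U''=\sum_{m\neq 0}(-m^2)U_m e^{imy}$, whence $\Pi_0\big[U''(-\de_{yy}+\eps^2)^{-1}\Pi_{\neq}f\big]=\sum_{j\neq 0}(-j^2)U_{-j}\frac{f_j}{j^2+\eps^2}$. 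Adding the two contributions and using the crucial cancellation
$$1-\frac{j^2}{j^2+\eps^2}=\frac{\eps^2}{j^2+\eps^2}=\frac{\eps^2}{(\eps+ij)(\eps-ij)}\,,$$
we obtain $\Pi_0\cR_\eps f=\eps^2\sum_{j\neq 0}\frac{U_{-j}f_j}{(\eps+ij)(\eps-ij)}$. Finally, since the Fourier multiplier of $(\de_y+\eps)^{-1}$ on $e^{ijy}$ is $(ij+\eps)^{-1}$ and $U_{-j}=\overline{U_j}$ ($U$ real), this sum is precisely $\eps^2\langle(\de_y+\eps)^{-1}U,(\de_y+\eps)^{-1}\Pi_{\neq}f\rangle_{L^2}$, which is \eqref{Cancelation}. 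I would also record the coordinate-free version of the same computation, as it is the form most convenient later: with $h\coloneqq(-\de_{yy}+\eps^2)^{-1}\Pi_{\neq}f$ one has $h''=\de_{yy}h=-(-\de_{yy}+\eps^2)h+\eps^2 h=-\Pi_{\neq}f+\eps^2 h$, so integrating by parts twice (periodicity) $\Pi_0[U''h]=\frac{1}{2\pi}\int_\TT U''h=\frac{1}{2\pi}\int_\TT Uh''=\frac{1}{2\pi}\big(\eps^2\!\int_\TT Uh-\int_\TT U\,\Pi_{\neq}f\big)$; combined with $\Pi_0[Uf]=\frac{1}{2\pi}\int_\TT U\,\Pi_{\neq}f$, the two copies of $\int_\TT U\,\Pi_{\neq}f$ cancel and only $\frac{\eps^2}{2\pi}\int_\TT Uh=\eps^2\langle U,(-\de_{yy}+\eps^2)^{-1}\Pi_{\neq}f\rangle$ survives; distributing $(-\de_{yy}+\eps^2)^{-1}=(-\de_y+\eps)^{-1}(\de_y+\eps)^{-1}$ and moving the factor $(-\de_y+\eps)^{-1}$ onto $U$ through its adjoint $(\de_y+\eps)^{-1}$ recovers \eqref{Cancelation}.

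The estimate \eqref{estimatePi0cR} is then immediate: by \eqref{Cancelation} the map $f\mapsto\Pi_0\cR_\eps f$ is a bounded linear functional, and Cauchy–Schwarz — together with the extremal choice essentially proportional to $U$ — identifies its norm with the stated quantity.

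There is no genuine analytic obstacle here; the lemma is elementary. The only real ``work'' is bookkeeping the Fourier algebra (respectively the two integrations by parts) with care and placing the projectors $\Pi_0,\Pi_{\neq}$ correctly, so that the precise $\eps^2$-cancellation in \eqref{Cancelation} comes out exactly. This precision is what matters downstream: it is exactly this identity that lets both the Kato and the normal-form schemes absorb the singular $\cO(\eps^{-1})$ term $\frac{\im}{\eps}U''(y)\Pi_0$ inside $\cM_{\nu,\eps}$, so getting the constant in front of $\eps^2$ right is the point of the lemma.
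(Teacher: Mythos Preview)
Your proof is correct and follows essentially the same route as the paper: the paper integrates by parts twice to exhibit the cancellation and then passes to Fourier via Parseval, whereas you do the Fourier computation first and then record the same integration-by-parts argument as a ``coordinate-free'' alternative. For \eqref{estimatePi0cR} you argue exactly as the paper does, by Cauchy--Schwarz with the extremizer taken proportional to $U$.
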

\begin{proof} The identity   $\Pi_0\cR_\e 1 = \Pi_0 U = 0$ follows by \cref{hyp:U}. Let $f \in L^2 $, then 
\begin{align}
\Pi_0\cR_\e f &= \Pi_0\cR_\e \Pi_{\neq}f =\Pi_0 \Big(\big( U + U''(-\de_{yy}+\eps^2  )^{-1} \big)\Pi_{\neq}f \Big)  \\
\label{auxPi0cR}&= \langle U , \Pi_{\neq}f\rangle + \langle U, \de_{yy}(-\de_{yy}+\eps^2 )^{-1} \Pi_{\neq}f\rangle ,
\end{align}
where in the second term we integrated by parts twice (using the self-adjointness of $\de_{yy}$ in $L^2$). Since $\de_{yy}=(\de_{yy}-\eps^2)+\eps^2$, note that 
\begin{equation}
    \langle U, \de_{yy}(-\de_{yy}+\eps^2 )^{-1} \Pi_{\neq}f\rangle=-\langle U,  \Pi_{\neq}f\rangle+\eps^2\langle U, (-\de_{yy}+\eps^2 )^{-1} \Pi_{\neq}f\rangle.
\end{equation}
Hence, the first term in \cref{auxPi0cR} cancels with the first term on the right hand side of the identity above. From Parseval's theorem, we have that
\begin{equation}
    \Pi_0\cR_\e f = \eps^2\sum_{j\in \ZZ\setminus \{0\}} \frac{1}{j^2+\eps^2}U_j \overline{f}_j=\eps^2\sum_{j\in \ZZ\setminus \{0\}} \frac{1}{(\im j+\eps)(-\im j+\eps)}U_j \overline{f}_j,
\end{equation}
from which the identity \cref{Cancelation} follows. Note that one could have also used $-\de_{yy}+\eps^2  = (-\de_y + \e)(\de_y +\e) $ and then integration by parts.
For the bound \cref{estimatePi0cR}, it is enough to observe that the maximum of $\|\Pi_0 \cR_\e f\|_{L^2}$ on $\{\| f\|_{L^2} = 1\}$  is attained at $f= U/\|U\|_{L^2}$.
\end{proof}

Then, we have the following basic spectral properties of $\cM_{\nu,\eps}$.
\begin{lemma}\label{unperturbedspectrum}
Let $\eps  \neq 0 $ and $\cM_{\nu,\eps}: H^2(\TT)\to L^2(\TT)$ be the operator defined in \cref{cMcR}. Then, 
\begin{equation}
\label{eq:idspecMD}
 \sigma_{L^2}(\cM_{\nu,\eps})=\nu\sigma_{L^2}(\cD_{\eps})=-\nu \{(j^2+\eps^2)\}_{j\in \ZZ}  .
\end{equation}
Moreover, the set of eigenfunctions $\{U(y)-\im \eps\nu,e^{\pm \im j y}\}_{j\in \NN}$, respectively associated with the eigenvalues $\{-\nu\eps^2,-\nu(j^2+\eps^2)\}_{j\in \NN}$,  is  a basis of  $L^2(\TT)$. 
%and are eigenfunctions . 
% the  $L^2$-spectrum of  $\cM_{\nu,\eps}$ is described by the following eigenfunctions
% \begin{align}\label{eigenpairsunplem}\begin{cases}
%     \cM_{\nu,\eps} \big( \im U(y) + \eps \nu \big) = - \nu \eps^2 \big( \im U(y) + \eps \nu \big)\, \\
%    \cM_{\nu,\eps} e^{\im j y}  = -\nu(j^2 + \e^2) e^{\im j y}\, ,\ \forall j\in \ZZ\setminus\{0\}\, .
%    \end{cases}
% \end{align}
% The set $\{(iU(y)+\eps\nu),e^{\im j y}\}_{j\in \ZZ\setminus\{0\}}$  is  a basis of  $L^2$. 
\end{lemma}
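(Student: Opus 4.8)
The plan is to exploit that $\cM_{\nu,\eps}$ from \cref{cMcR} is a bounded, rank-one perturbation of the self-adjoint Fourier multiplier $\nu\cD_\eps$ of \cref{cD}: indeed $\cM_{\nu,\eps}f = \nu\cD_\eps f - \tfrac{\im}{\eps}(\Pi_0 f)\,U''$, where $\Pi_0 f\in\CC$ is the $y$-average of $f$. The single elementary fact that drives the proof is that $U''$ has zero $y$-average, i.e.\ $\Pi_0 U''=0$ (since $\int_\TT U''=0$ by periodicity; this is also consistent with $\Pi_0 U=0$ from \cref{hyp:U}). As a consequence, $\cM_{\nu,\eps}$ maps zero-average functions to zero-average functions, so with respect to the splitting $L^2(\TT)=\CC\oplus \Pi_{\neq}L^2(\TT)$ it is block lower-triangular, with diagonal blocks $-\nu\eps^2$ (on $\CC$) and $\nu\cD_\eps|_{\Pi_{\neq}L^2(\TT)}$; its spectrum is therefore the union of the two diagonal spectra, which is $\{-\nu\eps^2\}\cup\{-\nu(j^2+\eps^2):j\neq0\}=-\nu\{j^2+\eps^2\}_{j\in\ZZ}$.

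Making this precise for the inclusion $\sigma_{L^2}(\cM_{\nu,\eps})\subseteq\nu\sigma_{L^2}(\cD_\eps)$, I would fix $\lambda\notin\nu\sigma_{L^2}(\cD_\eps)$ and solve $(\cM_{\nu,\eps}-\lambda)f=g$ directly: setting $a:=\Pi_0 f$ we get $f=(\nu\cD_\eps-\lambda)^{-1}\!\big(g+\tfrac{\im a}{\eps}U''\big)$, and applying $\Pi_0$, using that $(\nu\cD_\eps-\lambda)^{-1}$ is diagonal in Fourier together with $(U'')_0=0$, the term proportional to $a$ disappears and $a=\Pi_0(\nu\cD_\eps-\lambda)^{-1}g$ is determined outright — this is the degenerate (denominator $=1$) case of the Sherman–Morrison identity. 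Substituting back gives an explicit bounded inverse of $\cM_{\nu,\eps}-\lambda$, hence $\lambda\in\rho_{L^2}(\cM_{\nu,\eps})$. For the reverse inclusion I would exhibit the eigenfunctions: for $j\in\ZZ\setminus\{0\}$ the function $e^{\im jy}$ has zero average, so $\cM_{\nu,\eps}e^{\im jy}=\nu\cD_\eps e^{\im jy}=-\nu(j^2+\eps^2)e^{\im jy}$; and for $-\nu\eps^2$, seeking an eigenvector of the form $U(y)+c$ and using $\Pi_0 U=0$ together with $\cD_\eps(U+c)=U''-\eps^2 U-\eps^2 c$, a one-line computation gives $(\cM_{\nu,\eps}+\nu\eps^2)(U+c)=(\nu-\tfrac{\im c}{\eps})U''$, which vanishes exactly for $c=-\im\nu\eps$. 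This establishes \cref{eq:idspecMD}, with $U(y)-\im\eps\nu$ the eigenfunction associated with $-\nu\eps^2$ and $e^{\pm\im jy}$ those associated with $-\nu(j^2+\eps^2)$.

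For the basis claim, I would introduce the linear map $T$ on $L^2(\TT)$ that fixes $e^{\pm\im jy}$ for all $j\in\NN$ and sends $1\mapsto U(y)-\im\eps\nu$. Since $\Pi_0 U=0$, one has $T=\uno+K$ with $K$ the rank-one operator $f\mapsto(\Pi_0 f)\,(U(y)-1-\im\eps\nu)$, which is bounded on $L^2(\TT)$ because $U\in C^2(\TT)\subset L^2(\TT)$; on its one-dimensional range $K$ acts as multiplication by the zeroth Fourier coefficient of $U-1-\im\eps\nu$, namely $-1-\im\eps\nu$, so $T=\uno+K$ has the single non-unit eigenvalue $-\im\eps\nu\neq0$ and is thus a bounded isomorphism of $L^2(\TT)$. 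Being the image under a bounded invertible operator of the orthonormal Fourier basis $\{1\}\cup\{e^{\pm\im jy}\}_{j\in\NN}$, the set $\{U(y)-\im\eps\nu\}\cup\{e^{\pm\im jy}\}_{j\in\NN}$ is a (Riesz) basis of $L^2(\TT)$, which completes the proof. I do not anticipate a genuine obstacle here: the proof is short once the triangular structure is in place; the only mildly delicate points are keeping straight the scalar-versus-constant-function identification behind $\Pi_0$, and checking that it is precisely $(U'')_0=0$ that trivializes the Sherman–Morrison denominator and, equivalently, closes the lower-triangular block decomposition.
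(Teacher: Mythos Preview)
Your proof is correct and follows essentially the same approach as the paper's: both exploit that $\cM_{\nu,\eps}\Pi_j=\nu\cD_\eps\Pi_j$ for $j\neq 0$ (your ``block lower-triangular'' observation) and verify by direct computation that $U(y)-\im\eps\nu$ is the eigenfunction for $-\nu\eps^2$. Your argument is more complete --- you explicitly construct the resolvent to establish $\sigma_{L^2}(\cM_{\nu,\eps})\subseteq\nu\sigma_{L^2}(\cD_\eps)$ (the paper effectively defers this to its later Sherman--Morrison lemma) and you prove the basis claim via the isomorphism $T=\uno+K$ rather than just noting linear independence --- but the underlying ideas are the same.
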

\begin{proof} For $j\neq0$, it is enough to observe that $\cM_{\nu,\eps}\Pi_j=\nu \cD_{\eps}\Pi_j$. Instead, the fact that $\cM_{\nu,\eps}(U(y)-\im \eps \nu)=-\nu \eps^2(U(y)-\im \eps \nu)$ is a direct computation. The new basis coincides with the standard Fourier basis except for the new $0$-th eigenvector $U(y)-\im \eps \nu$. The latter is linearly independent from the eigenvectors $e^{\im j x}$, $j\in \ZZ\setminus\{0\} $, because it has nonzero average.
\end{proof}
The Lemma \ref{unperturbedspectrum} suggests that the dependence on $\nu$ can be easily tracked.
 Then,  the part of order $\eps^{-1}$ is a rank-$1$ operator in the sense that $\mathrm{Ran}(U''(y)\Pi_0)=\mathrm{span}\{U''(y)\}$, which has dimension $1$. This is another fundamental observation that helps in computing explicitly the resolvent of $\cM_{\nu,\eps}$ by viewing it as a rank-$1$ modification of $\nu \cD_{\eps}$. Indeed,  this kind of structure is well-known in the finite dimensional case, and we can extend the celebrated Sherman-Morrison formula to an infinite dimensional setting of interest in our case. We refer to \cite{DENG20111561} for a more general result.
\begin{lemma}
\label{lem:ShermanMorrisoninf}
    Let $H$ be a Hilbert space with inner product $\langle\cdot,\cdot\rangle_H$, $\cA:D(\cA)\subset H\to H $ be a closed, densely-defined, linear operator with  bounded inverse $\cA^{-1}$ and $f,g\in H$.   
    Then, the operator $\cA+f\langle g,\cdot \rangle_H$ is invertible if and only if $
        \langle g,\cA^{-1} f\rangle_H+1\neq 0$.    Moreover 
    \begin{equation}\label{originalShermanMorrison}
        (\cA+f\langle g,\cdot\rangle_H)^{-1}=\cA^{-1}-\frac{\cA^{-1}(f\langle g,\cdot\rangle_H)\cA^{-1}}{1+\langle g,\cA^{-1} f\rangle_H}.
    \end{equation}
    
\end{lemma}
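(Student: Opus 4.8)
\textbf{Proof plan for the infinite-dimensional Sherman--Morrison formula (Lemma \ref{lem:ShermanMorrisoninf}).}
The plan is to reduce everything to elementary algebra by exploiting the rank-one structure. Write $\cB \coloneqq \cA + f\langle g, \cdot\rangle_H$ with natural domain $D(\cB)=D(\cA)$; since the rank-one perturbation is bounded, $\cB$ is again closed and densely defined. First I would prove the \emph{scalar criterion}. Suppose $1+\langle g, \cA^{-1}f\rangle_H \ne 0$. I claim the right-hand side of \cref{originalShermanMorrison}, call it $\cC$, is a genuine two-sided bounded inverse: it is bounded because $\cA^{-1}$ is bounded and $f\langle g,\cdot\rangle_H$ is bounded (of norm $\|f\|_H\|g\|_H$). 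Composing $\cB\cC$ on any $h\in H$ and $\cC\cB$ on any $h\in D(\cA)$, one just expands and uses that $\langle g, \cA^{-1}(f\langle g,\cA^{-1}h\rangle_H)\rangle_H = \langle g,\cA^{-1}f\rangle_H\,\langle g,\cA^{-1}h\rangle_H$, i.e. the scalar $\langle g,\cA^{-1}f\rangle_H$ factors out of every nested bracket. Collecting terms, the correction cancels precisely because of the denominator $1+\langle g,\cA^{-1}f\rangle_H$, yielding $\cB\cC = \cC\cB = \mathrm{Id}$. Hence $\cB$ is invertible with $\cB^{-1}=\cC$, which is formula \cref{originalShermanMorrison}.

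Conversely, suppose $1+\langle g, \cA^{-1}f\rangle_H = 0$. Then I would exhibit $\cA^{-1}f \in D(\cA)$ as a nonzero element of $\ker \cB$: indeed $\cB(\cA^{-1}f) = f + f\langle g, \cA^{-1}f\rangle_H = f(1+\langle g,\cA^{-1}f\rangle_H)=0$, and $\cA^{-1}f\ne 0$ since $\cA^{-1}$ is injective and one may assume $f\ne 0$ (if $f=0$ the perturbation vanishes, $\cB=\cA$ is invertible and the scalar condition reads $1\ne 0$, trivially true, so that degenerate case is consistent). Thus $\cB$ is not injective, in particular not invertible. This closes the equivalence.

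\textbf{Expected main obstacle.} The only genuinely non-routine point is the bookkeeping of \emph{domains}: one must check that $\cC$ maps $H$ into $D(\cA)$ so that $\cB\cC$ is well defined, and that $\cC\cB$ composes correctly on $D(\cA)$ without domain shrinkage. This follows from $\mathrm{Ran}(\cA^{-1})=D(\cA)$ and the fact that $f\in H$ forces $\cA^{-1}f\in D(\cA)$, so every term appearing in $\cC h$ lies in $D(\cA)$; but it is worth stating explicitly rather than glossing over, since in the unbounded setting a careless manipulation could produce an expression applied outside a domain. Everything else is the same one-line algebra as the classical matrix identity, the scalar $1+\langle g,\cA^{-1}f\rangle_H$ playing the role of $1+g^\top A^{-1}f$.
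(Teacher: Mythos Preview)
Your proposal is correct and follows essentially the same route as the paper's proof: the paper also shows that $\cA^{-1}f$ lies in the kernel when $1+\langle g,\cA^{-1}f\rangle_H=0$, and otherwise verifies by direct algebraic expansion that the right-hand side of \cref{originalShermanMorrison} is a two-sided inverse, using exactly the scalar-factoring identity you identify. Your additional remarks on domain bookkeeping are a welcome refinement that the paper leaves implicit.
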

The proof of this lemma is a direct computation that we include in \cref{rank1proof}. To apply the lemma above to the operator $\cM_{\nu,\eps}$, we simply observe that $\Pi_0=\langle 1,\cdot \rangle_{L^2}.$ Therefore we have the following.
\begin{lemma}\label{lem:ResolventcM} Let $ \zeta \in \rho_{L^2}(\cD_{\eps})$ belong to the resolvent set of $\cD_{\eps}$. Then, for every $\nu >0$,
\begin{align}\label{cMresolvent}
        (\cM_{\nu,\e} -\nu\zeta)^{-1} = \frac{1}{\nu} (\cD_\e -\zeta)^{-1}  +  \frac{u_{\e}(\zeta,y)}{\im \e \nu^2(\e^2+\zeta)}\Pi_0\,,
  \end{align}
where    $ u_{\e}(\zeta,y) \coloneqq  \big(\cD_{\eps}-\zeta\big)^{-1}  U''(y)= U(y)+ (\e^2+\zeta) \big(\cD_{\eps}-\zeta\big)^{-1} U(y)$.
Moreover,  The mapping $\zeta \mapsto u_\eps(\zeta,\cdot)$ is analytic on the half-plane $\{ z\in\CC\;:\; \Re\, z > - 1 \}$.
%and the function $u_{\e}(\zeta,y)$ satisfies the bound
 
\end{lemma}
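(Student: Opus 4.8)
The plan is to regard $\cM_{\nu,\eps}-\nu\zeta$ as a rank-one update of the boundedly invertible operator $\nu(\cD_\eps-\zeta)$ and to invoke the Sherman--Morrison formula \cref{originalShermanMorrison}. Since $\Pi_0=\langle 1,\cdot\rangle_{L^2}$, one writes $\tfrac{\im}{\eps}U''(y)\Pi_0=\big(\tfrac{\im}{\eps}U''\big)\langle 1,\cdot\rangle_{L^2}$, so that
\begin{equation*}
\cM_{\nu,\eps}-\nu\zeta=\nu(\cD_\eps-\zeta)+\big(-\tfrac{\im}{\eps}U''\big)\langle 1,\cdot\rangle_{L^2},
\end{equation*}
and I would apply \cref{lem:ShermanMorrisoninf} with $H=L^2(\TT)$, $\cA=\nu(\cD_\eps-\zeta)$, $f=-\tfrac{\im}{\eps}U''$ and $g=1$. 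The hypotheses are immediate: for $\zeta\in\rho_{L^2}(\cD_\eps)$ and $\nu>0$, the operator $\cA$ is closed and densely defined on $H^2(\TT)$ with bounded inverse $\cA^{-1}=\tfrac1\nu(\cD_\eps-\zeta)^{-1}$, and $U\in C^2(\TT)$ gives $f,g\in L^2(\TT)$.

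\textbf{The cancellation, and the formula.} The crux is the value of the scalar $1+\langle g,\cA^{-1}f\rangle_{L^2}$. Writing $U''=\de_{yy}U=(\cD_\eps-\zeta)U+(\eps^2+\zeta)U$ — which at once proves the second identity $u_\eps(\zeta,y)=U(y)+(\eps^2+\zeta)(\cD_\eps-\zeta)^{-1}U(y)$ in the statement — one finds $\cA^{-1}f=-\tfrac{\im}{\eps\nu}u_\eps(\zeta,\cdot)$, and both summands of $u_\eps$ have zero $y$-average, because $\int_\TT U=0$ and $(\cD_\eps-\zeta)^{-1}$ preserves the subspace of zero-average functions. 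Hence $\Pi_0 u_\eps(\zeta,\cdot)=0$, so $1+\langle g,\cA^{-1}f\rangle_{L^2}=1\neq0$; in particular $\cM_{\nu,\eps}-\nu\zeta$ is invertible for \emph{every} $\nu>0$, and \cref{originalShermanMorrison} collapses to $(\cM_{\nu,\eps}-\nu\zeta)^{-1}=\cA^{-1}-\cA^{-1}(f\langle g,\cdot\rangle_{L^2})\cA^{-1}$. To finish I would evaluate the second term on $h\in L^2(\TT)$: it equals $\langle 1,\cA^{-1}h\rangle_{L^2}\,\cA^{-1}f$, and since $\cD_\eps$ acts on constants as $-\eps^2$ one has $\langle 1,\cA^{-1}h\rangle_{L^2}=-\tfrac{1}{\nu(\eps^2+\zeta)}\Pi_0 h$; combining with $\cA^{-1}f=-\tfrac{\im}{\eps\nu}u_\eps$ and $1/\im=-\im$ yields exactly the rank-one term in \cref{cMresolvent}.

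\textbf{Analyticity.} For the last assertion I would restrict $\cD_\eps$ to the closed subspace $L^2_0(\TT)$ of zero-average functions, where it is self-adjoint with pure-point spectrum $\{-(j^2+\eps^2):j\in\ZZ\setminus\{0\}\}$, all of whose points lie to the left of $-(1+\eps^2)<-1$. Thus $\{z\in\CC:\Re z>-1\}\subseteq\rho_{L^2}(\cD_\eps|_{L^2_0})$ and the operator-valued map $\zeta\mapsto(\cD_\eps|_{L^2_0}-\zeta)^{-1}$ is analytic there; composing it with the fixed vectors $U''\in L^2_0(\TT)$, respectively $U\in L^2_0(\TT)$, shows that $u_\eps(\zeta,\cdot)$ is an $L^2(\TT)$-valued analytic function on that half-plane. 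The point $\zeta=-\eps^2\notin\rho_{L^2}(\cD_\eps)$ is covered because the identity $u_\eps=U+(\eps^2+\zeta)(\cD_\eps-\zeta)^{-1}U$ exhibits the prefactor $(\eps^2+\zeta)$ killing the singular contribution of the full resolvent at that point.

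\textbf{Main obstacle.} The only genuinely substantive point is the cancellation $\Pi_0 u_\eps(\zeta,\cdot)=0$: it forces the Sherman--Morrison denominator to be exactly $1$, hence independent of $\nu$, $\eps$ and $\zeta$, which is what keeps \cref{cMresolvent} and the attendant resolvent bounds under control as $\eps\to0$. This is precisely a shadow of \cref{lem:fundamentalcancelation}; the remainder is routine bookkeeping with the rank-one structure.
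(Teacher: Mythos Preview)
Your proposal is correct and follows essentially the same route as the paper: both apply \cref{lem:ShermanMorrisoninf} with $\cA=\nu(\cD_\eps-\zeta)$, $f=-\tfrac{\im}{\eps}U''$, $g=1$, verify that $\Pi_0 u_\eps=0$ forces the Sherman--Morrison denominator to equal~$1$, and then simplify using $(\cD_\eps-\zeta)^{-1}\Pi_0=-(\eps^2+\zeta)^{-1}\Pi_0$. The only cosmetic difference is that the paper establishes analyticity via the explicit Fourier series of $u_\eps$, whereas you invoke resolvent analyticity on $L^2_0(\TT)$; these are equivalent.
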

\begin{proof}
By \cref{eq:idspecMD}, one has $\nu \zeta \in \rho_{L^2}(\nu\cD_\eps)$. We can thus apply Lemma \ref{lem:ShermanMorrisoninf} with $H=L^2$, $\cA=\nu(\cD_{\eps}-\zeta)$, $f=-\im U''(y)/\eps$ and $g=1$ so that $\Pi_0=\langle g,\cdot\rangle_{L^2}$. We only need to check the condition for the invertibility. Since $\Pi_0$ commutes with $\de_{y}$ and $\Pi_0 U=0$, one has
\begin{equation}
    \langle 1, (\cD_\eps-\zeta)^{-1} U''(y)\rangle_{L^2}=\Pi_0\big((\cD_\eps-\zeta)^{-1} U''(y)\big)=0.
\end{equation}
Thus, by  Lemma \ref{lem:ShermanMorrisoninf} and since
$\Pi_0$ and $ \cD_\eps-\zeta$ commute, we have
\begin{equation}\label{compositionaux}
  (\cM_{\nu,\eps} -\nu \zeta)^{-1} = \frac1\nu \big(\cD_{\eps}-\zeta\big)^{-1}  + \frac{\im}{\e\nu^2} \big(\cD_{\eps}-\zeta\big)^{-1}  U''(y) \big(\cD_{\eps}-\zeta\big)^{-1} \Pi_0 \,.
\end{equation}

By \cref{cD}, the operator $(\cD_\e - \zeta)^{-1} \Pi_0 =- (\e^2+\zeta)^{-1} \Pi_0 $ always returns a scalar. Thus we can substitute   $(\cD_\e - \zeta)^{-1} U''(y) $ with $u_\e(\zeta,y)$ in \cref{compositionaux} to obtain \cref{cMresolvent}. 

To prove the desired properties of $u_{\eps}(\zeta,\cdot)$, note that we have
\begin{equation}
u_{\e}(\zeta,y) = -\sum_{j \neq 0} \frac{U_j e^{\im j y}}{j^2+\e^2+\zeta}\, .
\end{equation} 
Thus  the mapping $\zeta \mapsto u_{\e}(\zeta,y) $  has its greatest pole in $-(1+\e^2)<-1$ and is analytic for  $\Re\, \zeta > -1$.

\end{proof}

We now have at hand all the properties of operators $\cM_{\nu,\eps}$ and $\cR_{\nu,\eps}$ that are needed to start the study of the spectrum of $\cL_{\nu,\eps}.$

\subsection{Spectral projections and resolvent bounds}
% \red{Idea here is to get info on the Riesz projection of $\cL_{\nu,\eps}$ using the one of the leading order operator $\cM_{\nu,\eps}$, which we can formally define as... (insert definition). The standard Kato approach, requires that $(P-Q)^2<1$. In our case, this is hopeless because we will always need to take care of a large rank-1 operator. Therefore, we need to adapt the original approach and understand how to extract this rank-1 operator. Then, we will still be able to invert $Id-(P-Q)^2$ by using our Shermann-Morrison formula that handles explicitly the large rank-1 update. The end goal of this section, is to prove the following theorem (the Lemma 3.12)}

We now proceed with the rigorous definition of the spectral projections involved in the analysis. Let $j\in \NN_0$ and $\Gamma_j$ be the loop in the complex plane parameterized by 
\begin{equation}\label{Gammaj}
   \Gamma_j \;:\; \TT \ni \theta \mapsto -(j^2+ \e^2)  + \tfrac12 e^{\im \theta} \in \CC\,.
\end{equation}
 It encircles the isolated double (or simple if $j=0$) eigenvalue $-(j^2+\e^2)$ that the self-adjoint operator $\cD_\e$ in \cref{cD} shares with the operator $\cM_{1,\e}$ in \cref{cMcR}.  The rest of the common spectrum of the two operators lies outside $\Gamma_j$. In particular
$
     dist\big(\Gamma_j, \sigma_{L^2}(\cD_\e)\big) = \tfrac12
$
 and, since $\cD_\e$ is self-adjoint,
\begin{equation}\label{estimateFourierMultiplier}
 \| (\cD_\e - \zeta)^{-1} \|\leq 2\,, \qquad \forall \zeta \in \Gamma_j\,,\ j\in \NN_0\, .
\end{equation}
% \red{Abbiamo sbagliato qui secondo me. Il Riesz projection è definito come in \cref{RieszIntro}. Allora prendo come $\Gamma=\nu \Gamma_j$ poiche mi aspetto che gli autovalori siano dentro al circuito $\nu \Gamma_j$. Quindi avrei
% \begin{equation}
%         P_{\nu,\e}^{(j)} \coloneqq -\frac1{2\pi\im} \oint_{\nu\Gamma_j} (\cL_{\nu,\e} -\lambda)^{-1} \drm \lambda \;:\; L^2(\TT) \to L^2(\TT)\quad \text{and}\quad \cV^{(j)}_{\nu,\e} \coloneqq \mathrm{Ran}\,P_{\nu,\e}^{(j)}\,,
% \end{equation}
% Con il cambio di variabile $\lambda=\nu \zeta$ allora scopro che $\nu\Gamma_j\to \Gamma_j$ e $\drm\lambda=\nu \drm \zeta$, per cui
% \begin{equation}
%         P_{\nu,\e}^{(j)} \coloneqq -\frac{\nu}{2\pi\im} \oint_{\Gamma_j} (\cL_{\nu,\e} -\nu\zeta)^{-1} \drm \zeta 
% \end{equation}
% Questo extra fattore di $\nu$ sembra rimuovere stranezze relazionate ad avere $\eps/\nu^2$ etc. Se questo è corretto, scriverei cosi:}
Then, by Lemma \ref{unperturbedspectrum} we know that the eigenvalues of $\cM_{\nu,\eps}$ are encircled within the loop $\nu\Gamma_j$. Recalling the definition of the Riesz projections in \cref{RieszIntro}, we set $\Gamma=\nu\Gamma_j$, where $\Gamma_j$ is the loop in \cref{Gammaj}, and, after the change of variables $\lambda=\nu \zeta$, we introduce the auxiliary Riesz projection 
\begin{equation}\label{auxiliaryRieszproj}
    Q_{\nu,\e}^{(j)} \coloneqq -\frac{\nu}{2\pi\im} \oint_{\Gamma_j} (\cM_{\nu,\e} -\nu \zeta)^{-1} \drm \zeta \, ,\quad [ Q_{\nu,\e}^{(j)}]^2 =  Q_{\nu,\e}^{(j)}\, .
\end{equation}
Standard properties of the Riesz projection (see e.g.\ \cite{RieszNagy}) ensure that
\begin{enumerate}
    \item $\mathrm{Ran}\, Q_{\nu,\e}^{(j)}$ is the eigenspace of $\cM_{\nu,\e}$ associated with $-\nu(j^2+\e^2)$;
    \item $\mathrm{Ker}\, Q_{\nu,\e}^{(j)}$ is the direct sum of all the remaining eigenspaces of $\cM_{\nu,\e}$.
\end{enumerate}
We aim to extend the above spectral picture to the operator $\cL_{\nu,\e}=\cM_{\nu,\e} - \nu \zeta -\im \e \cR_\e $. We anticipate that $\nu\Gamma_j$ will also be contained in the resolvent set $\rho_{L^2}(\cL_{\nu,\eps})$ and therefore we define the Riesz projection 
\begin{equation}\label{Rieszproj}
    P_{\nu,\e}^{(j)} \coloneqq -\frac{\nu}{2\pi\im} \oint_{\Gamma_j} (\cL_{\nu,\e} -\nu\zeta)^{-1} \drm \zeta \;:\; L^2(\TT) \to L^2(\TT)\quad \text{and}\quad \cV^{(j)}_{\nu,\e} \coloneqq \mathrm{Ran}\,P_{\nu,\e}^{(j)}\,,
\end{equation}
where $[P_{\nu,\e}^{(j)}]^2 =P_{\nu,\e}^{(j)}$ and $\cV^{(j)}_{\nu,\e}$ is an invariant set 
of the operator $\cL_{\nu,\e}$.  We also
 observe that, for every $\zeta \in \Gamma_j$, we have that $\nu \zeta \in \rho(\cM_{\nu,\e})$ and so, by Lemma \ref{lem:ResolventcM}, $\cM_{\nu,\e} - \nu \zeta$ is invertible with bounded inverse. 
The resolvent operator of $\cL_{\nu,\e}$ is then given by
\begin{subequations}\label{resolventcL}
\begin{align}\label{resolventcL1}
 (\cL_{\nu,\e} -\nu\zeta)^{-1}    &= \big( \uno - \im \e (\cM_{\nu,\e} - \nu \zeta)^{-1}  \cR_\e \big)^{-1} (\cM_{\nu,\e} -\nu\zeta)^{-1} \\\label{resolventcL2}
    &=  \sum_{p=0}^{\infty} \big( \im \e  (\cM_{\nu,\e} -\nu\zeta)^{-1} \cR_\e\big)^{p}  (\cM_{\nu,\e} -\nu\zeta)^{-1} \, .
\end{align}
\end{subequations}
provided one can invert the operator $ \uno - \im \e (\cM_{\nu,\e} - \nu \zeta)^{-1}\cR_{\eps}$ and compute its Neumann expansion. We justify this in the following.

\begin{lemma}[Bounds for the resolvent of $\cL_{\nu,\e} $]
\label{lem:inverseMR}
   Let $j\in \NN_0$, $ \zeta \in \Gamma_j$,  $\e,\nu>0$ be such that
   \begin{equation}\label{smallcond}
       \frac{\e}{\nu}\|U\|_{H^2} < \frac{1}{4}
   \end{equation}
   Then, 
    \begin{equation}\label{uniformcondition}
        \| \im \e (\cM_{\nu,\e} -\nu\zeta)^{-1} \cR_\e\|_{\cB(L^2,L^2)} < 3  \frac{\e}{\nu}\|U\|_{C^2} .
    \end{equation}
%     for every $\nu >0$,
% \begin{equation}\label{auxiteration}
%         \im \e (\cM_{\nu,\e} -\nu\zeta)^{-1} \cR_\e = \frac{\im \e}{\nu} (\cD_\e -\zeta)^{-1} \cR_\e  +  \frac{u_{\e}(\zeta,y)}{\nu^2(\e^2+\zeta)}\Pi_0 \cR_\e\,,
%     \end{equation}
% where    $ u_{\e}(\zeta,y) \coloneqq  \big(\cD_{\eps}-\zeta\big)^{-1}  U''(y)$. We then have the following: 
% \begin{enumerate}
%     \item The operator norm of \cref{auxiteration} is bounded as
% \begin{align}\label{compositionestimate} 
%    \| \im \e (\cM_{\nu,\e} -\nu\zeta)^{-1} \cR_\e\|_{\cB(L^2,L^2)} &\leq \frac{2\e}{\nu} \Big( \|U\|_{C^2}+ \frac{2\e}{\nu} \|U\|_{L^2}\|U''\|_{L^2} \Big) \, .
% \end{align}
% \item \label{bd:ueps}The function $u_{\e}(\zeta,y)$ satisfies the bound
% \begin{equation}\label{bound:u}
%   \| u_{\e}(\zeta,y)\|_{L^2}   \leq  2\| U''\|_{L^2} \, .
% \end{equation} 
% \item The mapping $\zeta \mapsto u_\eps(\zeta,\cdot)$ is analytic on the half-plane $\{ z\in\CC\;:\; \Re\, z > - 1 \}$. \label{uepsanalytic}
% \end{enumerate}
\end{lemma}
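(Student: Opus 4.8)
The plan is to feed the explicit resolvent identity \cref{cMresolvent} of Lemma~\ref{lem:ResolventcM} into $\im\e(\cM_{\nu,\e}-\nu\zeta)^{-1}\cR_\e$ and estimate the resulting two pieces separately, uniformly over the loops $\Gamma_j$, $j\in\NN_0$. Multiplying \cref{cMresolvent} on the right by $\im\e\cR_\e$, the prefactor $(\im\e)^{-1}$ of the rank-one term cancels and one obtains
\begin{equation*}
\im\e(\cM_{\nu,\e}-\nu\zeta)^{-1}\cR_\e \;=\; \frac{\im\e}{\nu}(\cD_\e-\zeta)^{-1}\cR_\e \;+\; \frac{u_\e(\zeta,\cdot)}{\nu^2(\e^2+\zeta)}\,\Pi_0\cR_\e\,.
\end{equation*}
The first summand is a benign $\cO(\e/\nu)$ term. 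The second looks singular as $\e\to 0$, but the apparent $\e^{-1}$ blow-up will be killed by the cancellation $\Pi_0\cR_\e=\cO(\e^2)$ established in Lemma~\ref{lem:fundamentalcancelation}.

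For the geometry I would first note that each $\zeta\in\Gamma_j$ equals $-(j^2+\e^2)+\tfrac12 e^{\im\theta}$, so that $\e^2+\zeta=-j^2+\tfrac12 e^{\im\theta}$ obeys $|\e^2+\zeta|\ge\tfrac12$ for every $j\in\NN_0$, while $\|(\cD_\e-\zeta)^{-1}\|_{\cB(L^2,L^2)}\le 2$ by \cref{estimateFourierMultiplier}. Then the first summand is at once bounded, via \cref{boundcR}, by $\tfrac{\e}{\nu}\cdot 2\cdot(\|U\|_{L^\infty}+\|U''\|_{L^\infty})\le\tfrac{2\e}{\nu}\|U\|_{C^2}$.

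For the second summand the one delicate point is to bound $u_\e$ uniformly in $j$. I would resist the temptation to expand $u_\e=U+(\e^2+\zeta)(\cD_\e-\zeta)^{-1}U$, since $|\e^2+\zeta|$ grows like $j^2$ along $\Gamma_j$; instead I keep $u_\e(\zeta,\cdot)=(\cD_\e-\zeta)^{-1}U''$ and apply again $\|(\cD_\e-\zeta)^{-1}\|\le 2$, getting $\|u_\e(\zeta,\cdot)\|_{L^2}\le 2\|U''\|_{L^2}$. Pairing this with $|\Pi_0\cR_\e f|\le\e^2\|U\|_{L^2}\|f\|_{L^2}$ (which follows from \cref{estimatePi0cR} and $\|(\de_y+\e)^{-1}U\|_{L^2}\le\|U\|_{L^2}$) and with $|\e^2+\zeta|^{-1}\le 2$, the second summand has operator norm at most $\tfrac{1}{\nu^2}\cdot 2\cdot 2\|U''\|_{L^2}\cdot\e^2\|U\|_{L^2}\le\tfrac{4\e^2}{\nu^2}\|U\|_{H^2}^2=4\big(\tfrac{\e}{\nu}\|U\|_{H^2}\big)^2$. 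At this stage the smallness hypothesis \cref{smallcond} turns the square into a single power: $4\big(\tfrac{\e}{\nu}\|U\|_{H^2}\big)^2<\tfrac{\e}{\nu}\|U\|_{H^2}\le\tfrac{\e}{\nu}\|U\|_{C^2}$, using $\|U\|_{H^2}\le\|U\|_{C^2}$. Summing the two contributions yields $\|\im\e(\cM_{\nu,\e}-\nu\zeta)^{-1}\cR_\e\|_{\cB(L^2,L^2)}<\tfrac{2\e}{\nu}\|U\|_{C^2}+\tfrac{\e}{\nu}\|U\|_{C^2}=\tfrac{3\e}{\nu}\|U\|_{C^2}$.

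The hard part is exactly this uniform-in-$j$ control of the rank-one term: a careless estimate of $u_\e/(\e^2+\zeta)$ produces a $j$-dependent bound, useless on arbitrarily large loops $\Gamma_j$, and the fix combines (a) keeping $u_\e$ as $(\cD_\e-\zeta)^{-1}U''$ so that the uniform resolvent bound applies, (b) the $\e^2$ gain of $\Pi_0\cR_\e$ from Lemma~\ref{lem:fundamentalcancelation} to absorb the $\e^{-1}$ in \cref{cMresolvent}, and (c) the assumption \cref{smallcond} to trade the leftover $(\e/\nu)^2$ for $\e/\nu$. Everything else is routine.
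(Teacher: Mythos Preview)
Your proof is correct and follows essentially the same route as the paper: decompose via \cref{cMresolvent}, bound the first term with \cref{estimateFourierMultiplier} and \cref{boundcR}, bound the second by keeping $u_\e=(\cD_\e-\zeta)^{-1}U''$ and using \cref{estimatePi0cR} together with $\|(\de_y+\e)^{-1}U\|_{L^2}\le\|U\|_{L^2}$, then invoke \cref{smallcond} to close. The only cosmetic difference is that the paper packages the two contributions into a single factor $\|U\|_{C^2}+\tfrac{2\e}{\nu}\|U\|_{L^2}\|U''\|_{L^2}$ and bounds it by $\tfrac32\|U\|_{C^2}$, whereas you estimate the terms separately; the arithmetic is the same.
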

The previous Lemma with formula \eqref{resolventcL1} show that, for $j$, $ \zeta $,  $\e,\nu>0$ as in the Lemma,  $\cL_{\nu,\e} -\nu\zeta$ is invertible with bounded inverse, whose operator norm can be controlled using Lemma \ref{lem:ResolventcM} and
\begin{equation}\label{Neumannestimate}
    \|(  \uno-\im \e (\cM_{\nu,\e} -\nu\zeta)^{-1} \cR_\e)^{-1} \|_{\cB(L^2,L^2)} \leq \frac{1}{1-3 \frac{\e}{\nu}\|U\|_{C^2}} \leq 4\, ,
\end{equation}
thus justifying \cref{resolventcL1,resolventcL2}. 
%Before proving the lemma, let us simplify the bound \cref{compositionestimate} for subsequent applications. Note that the hypothesis in the lemma below is consistent with \cref{conditionprop}.
% \begin{lemma}\label{rem:katobegins}
%     There exists an absolute constant $C_* \approx 4.32 $ such that:
%     \begin{align}
% \label{uniformcondition}
%  \text{if }\  \frac{\e}{\nu}\|U\|_{C^2} < \frac{1}{C_*}\quad\text{then }\    \| \im \e (\cM_{\nu,\e} -\nu\zeta)^{-1} \cR_\e\|_{\cB(L^2,L^2)} < C_*  \frac{\e}{\nu}\|U\|_{C^2}  \, .
% \end{align}
% \end{lemma}
% \begin{proof}
%  It suffices to observe that 
% \begin{equation}
%     \|U\|_{L^2} \|U''\|_{L^2} \leq \frac25 (\|U\|_{L^2} + \|U''\|_{L^2})^2 \leq \frac{4\pi}{5} \|U\|_{C^2}^2\, .
% \end{equation}
% Then \cref{compositionestimate} can be worsen into, by letting $x\coloneqq 2\e\nu^{-1} \|U\|_{C^2}$,
% \begin{align}
%    \| \im \e (\cM_{\nu,\e} -\nu\zeta)^{-1} \cR_\e\|_{\cB(L^2,L^2)} <  x + \tfrac{4\pi}{5}\,x^2\,.
% \end{align}
% On the right-hand side of the inequality, we have the equation of a parabola with a minimum at the origin and that reaches height 1 at two opposite values $x=\pm 1/C_*$ that can be easily computed. By convexity, we have $x + \tfrac{4\pi}{5}\,x^2 < C_*\, x < 1$ as long as $0<x<1/C_* $. 
% % , there exists an absolute constant $C_*\approx 2.16$,  such that  $\|\im \e (\cM_{\nu,\e} -\nu\zeta)^{-1} \cR_\e\|<1$ whenever 
% % Hence, if the inequality above holds true, then the loops $\Gamma_j$ lie also in the resolvent of $\cL_{\nu,\e}$ as shown in \cref{lem:Katobegins} below.
% \end{proof}
\begin{proof}[ Proof of Lemma \ref{lem:inverseMR}]
    One has $\zeta \in \rho_{L^2}(\cD_\eps)$ and, by Lemma \ref{lem:ResolventcM},
\begin{equation}\label{auxiteration1}
        \im \e (\cM_{\nu,\e} -\nu\zeta)^{-1} \cR_\e = \frac{\im \e}{\nu} (\cD_\e -\zeta)^{-1} \cR_\e  +  \frac{u_{\e}(\zeta,y)}{\nu^2(\e^2+\zeta)}\Pi_0 \cR_\e\,,
    \end{equation}
To estimate the operator norm of the right hand side,  we first observe that   $    \| {u_{\e}(\zeta,y)}
     \Pi_0 \cR_\e f \|_{L^2} =  \| {u_{\e}(\zeta,y)}\|_{L^2}
     |\Pi_0 \cR_\e f|$, for all $f \in L^2(\TT)$ .
%\end{equation} 
By \cref{estimateFourierMultiplier,boundcR,estimatePi0cR,cMresolvent}, we get
\begin{equation}\label{compositionestimateaux}
    \|\im \e (\cM_{\nu,\e} -\nu\zeta)^{-1} \cR_\e \|_{\cB(L^2,L^2)} \leq \frac{2\e}{\nu} (\|U\|_{L^\infty} + \|U''\|_{L^\infty})  + \frac{2\e^2}{\nu^2} \|u_\e \|_{L^2} \frac{\| (\de_y+\e)^{-1} U \|^2_{L^2}}{\|U\|_{L^2}}\,,
\end{equation}
By \cref{estimateFourierMultiplier},
%\begin{equation}\label{bound:u}
 $\| u_{\e}(\zeta,\cdot)\|_{L^2}    = \|\big(\cD_{\eps}-\zeta\big)^{-1}  U''(y)\|_{L^2}    \leq  %\frac 1{{\rm dist} \zeta, \rho_{L^2}( \cD_\eps) )}
 2\| U''\|_{L^2}\, $.
By denoting as $U_j$ the Fourier coefficients of $U$, so that $U=\sum_{j \neq 0} U_j e^{\im j y}$ because of  \cref{hyp:U}, we have
\begin{equation}\label{worsenestimate}
 (\de_y+\e)^{-1} U = {\sum}_{j\neq 0}  \frac{U_j}{\im j + \e } e^{\im j y}\quad \text{and} \quad \| (\de_y+\e)^{-1} U \|_{L^2} \leq \|U\|_{L^2}\, .
\end{equation}
Hence we obtain
\begin{align}\label{compositionestimate} 
\| \im \e (\cM_{\nu,\e} -\nu\zeta)^{-1} \cR_\e\|_{\cB(L^2,L^2)} &\leq \frac{2\e}{\nu} \Big( \|U\|_{C^2}+ \frac{2\e}{\nu} \|U\|_{L^2}\|U''\|_{L^2} \Big) \, .
\end{align}
Under the assumption \cref{smallcond}, the factor in parenthesis in the right-hand side is bounded by $(3/2)\|U\|_{C^2}$, thus proving \cref{uniformcondition}.
\end{proof}

Thanks to condition \cref{uniformcondition}, we can safely  define the Riezs projection in \cref{Rieszproj} for every $j\in \NN_0$. As explained in the introduction, with the standard Kato's approach one would like to prove that $P_{\nu,\e}^{(j)}$ is sufficiently close to $Q_{\nu,\e}^{(j)}$. Unfortunately, this is not the case in our problem. Instead, we observe that the new projection is close to a rank-$1$ update of the projection $Q_{\nu,\e}^{(j)}$ in \cref{auxiliaryRieszproj}, as we show in the lemma below. 
\begin{lemma}\label{lem:P-Q}  There exists $\delta_U >0$ and $C>1$, depending on $\|U \|_{H^2}$ but independent of $\eps,\nu,j$, such that, if $\nu^{-1}\e<\delta_U\,$, $j\in\NN_0$,   then the Riezs projections $Q_{\nu,\e}^{(j)}$ and $ P_{\nu,\e}^{(j)}$ in \cref{auxiliaryRieszproj} and \cref{Rieszproj} satisfy
% .Then, there exists a constant $C>0$, independent of $j,\eps,\nu$ such that
\begin{equation}\label{Katodistance}
    P_{\nu,\e}^{(j)} - Q_{\nu,\e}^{(j)} = \frac{1}{\nu^{2}} \big(w^{(j)}_{\e}(y) +\frac{\eps}{\nu} f^{(j)}_{\nu,\eps}(y)
    %\cF_2^{(j)}(y)
    \big)\Pi_0+ \frac{\e}{\nu}\cF_{\nu,\eps}^{(j)}
    %\frac{\e}{\nu} \cF_1^{(j)}\, ,
\end{equation}
where
the function $w^{(j)}_{\e}(y)$ is given by
\begin{equation}\label{def:wj}
     w^{(j)}_{\e}(y)  \coloneqq -\frac{1}{2\pi\im} \oint_{\Gamma_j} \frac{(\cD_\e-\zeta)^{-1}\Pi_{\neq}\cR_\e}{\e^2+\zeta} u_\e(\zeta,y) \drm \zeta\, ,
\end{equation}
with $u_\e$ defined in Lemma \ref{lem:inverseMR}. The function $f_{\nu,\eps}^{(j)}$ and the linear operator $\cF_{\nu,\eps}^{(j)}:L^2(\TT) \to L^2(\TT)$ are bounded independently of $j,\eps,\nu$ and satisfy
\begin{align}
\label{bd:Pi0F1F2}
    \|\cF_{\nu,\eps}^{(j)}\|_{\cB(L^2,L^2)}+ \frac{1}{\eps \nu}\|\Pi_0\cF_{\nu,\eps}^{(j)}\|_{\cB(L^2,\RR)}+  \|f_{\nu,\eps}^{(j)}\|_{L^2}+\frac{1}{ \eps\nu}|\Pi_0f_{\nu,\eps}^{(j)}|\leq C ,
\end{align}
%where $C_U>0$ is a constant independent of $\eps,\nu,j$.
% The spatial average of the function $w_\e^{(j)}$ satisfies the bound
% \begin{equation}
% \label{averagewj}
% |    \Pi_0 w^{(j)}_{\e} | \leq 8 \e^2 \|U''\|_{L^2}^2
% %\leq \eps^2\|U\|^2_{L^2} .
% \end{equation}
% % \begin{equation}
% %     |\Pi_0 w^{(j)}_{\e}| \leq  8\e^2\frac{\|(\de_y +\e)^{-1}U \|_{L^2}\|U\|_{L^2}}{ \|U''\|_{L^2} }\, .
% % \end{equation}
% % and $C>0$ is a constant independent of $\nu,\eps,j$.
\end{lemma}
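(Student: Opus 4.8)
The plan is to read $P^{(j)}_{\nu,\e}-Q^{(j)}_{\nu,\e}$ off the Neumann representation of the resolvent in \cref{resolventcL1,resolventcL2}, integrated along $\Gamma_j$. Writing $K_\zeta\coloneqq \im\e\,(\cM_{\nu,\e}-\nu\zeta)^{-1}\cR_\e$, which by \cref{uniformcondition} satisfies $\|K_\zeta\|_{\cB(L^2,L^2)}<3\tfrac{\e}{\nu}\|U\|_{C^2}\le\tfrac14$ for every $\zeta\in\Gamma_j$ and every $j\in\NN_0$ (shrinking $\delta_U$ in terms of $\|U\|_{C^2}$ if needed), one has the exact identity
\begin{equation}\label{plan:PminusQ}
P^{(j)}_{\nu,\e}-Q^{(j)}_{\nu,\e}=-\frac{\nu}{2\pi\im}\oint_{\Gamma_j}K_\zeta\,(\uno-K_\zeta)^{-1}\,(\cM_{\nu,\e}-\nu\zeta)^{-1}\,\drm\zeta\,.
\end{equation}
Into this I would substitute the explicit resolvent formula \cref{cMresolvent}, namely $(\cM_{\nu,\e}-\nu\zeta)^{-1}=\tfrac1\nu(\cD_\e-\zeta)^{-1}+r_\e(\zeta)\,\Pi_0$ with $r_\e(\zeta)\coloneqq \tfrac{u_\e(\zeta,\cdot)}{\im\e\nu^2(\e^2+\zeta)}$, which also yields $K_\zeta=\tfrac{\im\e}{\nu}(\cD_\e-\zeta)^{-1}\cR_\e+\im\e\,r_\e(\zeta)\,(\Pi_0\cR_\e)$. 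The key point is that, \emph{uniformly in} $j$, all elementary factors on $\Gamma_j$ are controlled: by the choice \cref{Gammaj} one has $\mathrm{dist}(\Gamma_j,\sigma_{L^2}(\cD_\e))=\tfrac12$, hence $\|(\cD_\e-\zeta)^{-1}\|\le2$ (so also $\|u_\e(\zeta,\cdot)\|_{L^2}=\|(\cD_\e-\zeta)^{-1}U''\|_{L^2}\le2\|U''\|_{L^2}$) and $|\e^2+\zeta|\ge\tfrac12$, together with $\|\cR_\e\|\le\|U\|_{L^\infty}+\|U''\|_{L^\infty}$ from \cref{estimateFourierMultiplier,boundcR}. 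Thus $\|K_\zeta\|\lesssim\e/\nu$, $\|r_\e(\zeta)\Pi_0\|\lesssim1/(\e\nu^2)$, and — invoking the cancellation \cref{estimatePi0cR} — $\Pi_0 K_\zeta=-\tfrac{\im\e}{\nu(\e^2+\zeta)}\Pi_0\cR_\e=\cO(\e^3/\nu)$, while $\Pi_0 r_\e(\zeta)=0$ since $u_\e(\zeta,\cdot)$ has zero average.

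I would then split \cref{plan:PminusQ} along the two summands of $(\cM_{\nu,\e}-\nu\zeta)^{-1}$. The contribution of $\tfrac1\nu(\cD_\e-\zeta)^{-1}$, equal to $-\tfrac1{2\pi\im}\oint_{\Gamma_j}K_\zeta(\uno-K_\zeta)^{-1}(\cD_\e-\zeta)^{-1}\drm\zeta$, is $\cO(\e/\nu)$ in $\cB(L^2,L^2)$ and, since $\Pi_0 K_\zeta=\cO(\e^3/\nu)$, has $\Pi_0$-component $\cO(\e^3/\nu)$; it goes into $\tfrac\e\nu\cF^{(j)}_{\nu,\e}$. The contribution of $r_\e(\zeta)\Pi_0$ has the form $g^{(j)}_{\nu,\e}(y)\,\Pi_0$ with $g^{(j)}_{\nu,\e}=-\tfrac{\nu}{2\pi\im}\oint_{\Gamma_j}K_\zeta(\uno-K_\zeta)^{-1}r_\e(\zeta,\cdot)\,\drm\zeta$. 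Using $(\uno-K_\zeta)^{-1}=\uno+K_\zeta(\uno-K_\zeta)^{-1}$ to isolate the leading term $K_\zeta r_\e=\tfrac{\im\e}{\nu}(\cD_\e-\zeta)^{-1}\cR_\e r_\e+\im\e\,r_\e\,\Pi_0\cR_\e r_\e$, together with $\cR_\e r_\e=\tfrac1{\im\e\nu^2(\e^2+\zeta)}\cR_\e u_\e$, the decomposition $\cR_\e u_\e=\Pi_{\neq}\cR_\e u_\e+\Pi_0\cR_\e u_\e$, and $(\cD_\e-\zeta)^{-1}\Pi_0=-(\e^2+\zeta)^{-1}\Pi_0$, the summand $\tfrac{\im\e}{\nu}(\cD_\e-\zeta)^{-1}\Pi_{\neq}\cR_\e r_\e$ contributes exactly $\tfrac1{\nu^2}w^{(j)}_\e(y)$ to $g^{(j)}_{\nu,\e}$, i.e.\ $\tfrac1{\nu^2}w^{(j)}_\e\Pi_0$ to $P^{(j)}_{\nu,\e}-Q^{(j)}_{\nu,\e}$, with $w^{(j)}_\e$ as in \cref{def:wj}. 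Every remaining piece of $g^{(j)}_{\nu,\e}$ — the rank-one term $\im\e\,r_\e\,\Pi_0\cR_\e r_\e$, the higher-order correction $K_\zeta^2(\uno-K_\zeta)^{-1}r_\e$, and the constant $-\tfrac{\Pi_0\cR_\e u_\e}{\nu^3(\e^2+\zeta)^2}$ coming from $\tfrac{\im\e}{\nu}(\cD_\e-\zeta)^{-1}\Pi_0\cR_\e r_\e$ — is either an $L^2$-function of norm $\cO(\e/\nu^3)$, absorbed into $\tfrac\e{\nu^3}f^{(j)}_{\nu,\e}\Pi_0$, or (the last item) a constant function of size $\cO\big(\tfrac1{\nu^2}\min\{\e^2,1\}\big)$, the $\e^2$ again provided by \cref{estimatePi0cR}; this last constant is distributed between $\tfrac\e{\nu^3}f^{(j)}_{\nu,\e}\Pi_0$ and $\tfrac\e\nu\cF^{(j)}_{\nu,\e}$ (e.g.\ to the former when $\nu\lesssim1$, to the latter when $\nu\gtrsim1$), using only $\e/\nu<\delta_U<1$.

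It then remains to verify \cref{bd:Pi0F1F2}. The bounds $\|\cF^{(j)}_{\nu,\e}\|_{\cB(L^2,L^2)}\le C$ and $\|f^{(j)}_{\nu,\e}\|_{L^2}\le C$ follow by summing the geometric series $\sum_{p\ge1}K_\zeta^p$ (legitimate since $\|K_\zeta\|\le\tfrac14$) and using the uniform estimates above: none of the pieces listed is worse than $\cO(\e/\nu)$ in operator norm, respectively $\cO(\e/\nu^3)$ in $L^2$ after factoring out $\Pi_0$. The smallness bounds $\tfrac1{\e\nu}\|\Pi_0\cF^{(j)}_{\nu,\e}\|+\tfrac1{\e\nu}|\Pi_0 f^{(j)}_{\nu,\e}|\le C$ are where the structure genuinely enters: each time $\Pi_0$ hits a factor $K_\zeta$ from the left it produces, by \cref{estimatePi0cR}, an extra $\Pi_0\cR_\e=\cO(\e^2)$, and all rank-one contributions have range in $\mathrm{span}\{u_\e(\zeta,\cdot)\}$, which is mean-free; carrying this through shows that the whole $\Pi_0$-to-$\Pi_0$ block of $P^{(j)}_{\nu,\e}-Q^{(j)}_{\nu,\e}$ is only $\cO(\e^2/\nu^2)$, which is exactly what \cref{bd:Pi0F1F2} records (note $\Pi_0 w^{(j)}_\e=0$ because of the $\Pi_{\neq}$ in \cref{def:wj}). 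I expect the main obstacle to be precisely this bookkeeping: isolating the genuinely $\cO(\nu^{-2})$ rank-one piece $\tfrac1{\nu^2}w^{(j)}_\e\Pi_0$ while proving that \emph{everything else} is one factor of $\e/\nu$ (or $\e/\nu^3$ with a $\Pi_0$) smaller, and in particular that the scalar block shrinks to $\cO(\e^2/\nu^2)$ — without the cancellation $\Pi_0\cR_\e=\cO(\e^2)$ of \cref{lem:fundamentalcancelation} this collapses, the $\Pi_0$-block of the difference would sit at the same order $\nu^{-2}$ as the off-diagonal one, and the subsequent inversion of $\uno-(P-Q)^2$ would fail.
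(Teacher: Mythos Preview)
Your proposal is correct and follows essentially the same route as the paper: both start from the resolvent identity to write $P^{(j)}_{\nu,\e}-Q^{(j)}_{\nu,\e}$ as a contour integral of $K_\zeta(\uno-K_\zeta)^{-1}(\cM_{\nu,\e}-\nu\zeta)^{-1}$ (the paper writes this equivalently as $\im\e(\cL_{\nu,\e}-\nu\zeta)^{-1}\cR_\e(\cM_{\nu,\e}-\nu\zeta)^{-1}$), split along the two summands of \cref{cMresolvent}, extract the leading rank-one piece $\tfrac{1}{\nu^2}w^{(j)}_\e\Pi_0$ from the $r_\e\Pi_0$ contribution, and bound all remainders using the cancellation $\Pi_0\cR_\e=\cO(\e^2)$. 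The only cosmetic difference is that the paper labels the intermediate pieces $\cI_1,\cI_2,\cI_3,\cI_4$ and places the constant $\cI_4$ entirely inside $f^{(j)}_{\nu,\e}$ (as $\tfrac{\nu}{\e}\cI_4$), whereas you distribute it between $f^{(j)}_{\nu,\e}$ and $\cF^{(j)}_{\nu,\e}$ according to the size of $\nu$; your treatment is in fact slightly more careful here, since it avoids the implicit use of $\e\nu\lesssim1$.
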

\begin{remark}
    We stress that the function $w_{\eps}^{(j)}$ is not small in general. Indeed, by Lemma \ref{lem:ResolventcM} observe that we can rewrite 
    \begin{align}
    w_\e^{(j)} &=-\frac{1}{2\pi\im} \oint_{\Gamma_j} \frac{(\cD_\e-\zeta)^{-1}\Pi_{\neq} \cR_\e}{\e^2+\zeta} U(y) \drm \zeta -\frac{1}{2\pi\im} \oint_{\Gamma_j} {(\cD_\e-\zeta)^{-1}\Pi_{\neq} \cR_\e} (\cD_\e-\zeta)^{-1} U(y) \drm \zeta \\
    &=: a^{(j)}_\e(y)+b^{(j)}_\e(y) \, . 
\end{align}
Taking the $\ell$-th Fourier coefficient of $a^{(j)}_{\eps}$ we have\begin{align}
a^{(j)}_{\e,\ell} =    \mathbbm{1}_{\{\ell\neq 0\}}[\cR_\e U]_\ell    \frac{1}{2\pi\im}\oint_{\Gamma_j} \frac{\drm \zeta}{(\e^2+\zeta)(\ell^2+\e^2+\zeta)} =  \frac{[\cR_\e U]_\ell}{\ell^2}\big(\mathbbm{1}_{\{j=0\}\cap \{\ell\neq 0\}}-\mathbbm{1}_{\{\ell=\pm j\}\cap \{\ell\neq 0\}}\big)
\end{align}
where in the last identity we used the residue theorem and the definition of $\Gamma_j$.
The $\ell$-th Fourier coefficient of $b^{(j)}_{\eps}$ is instead given by 
\begin{align}
    b_{\eps,\ell}^{(j)}=-\mathbbm{1}_{\{\ell\neq 0\}}\sum_{n\neq 0}  \cR_{\e,\ell-n} U_n    \frac{1}{2\pi\im} \oint_{\Gamma_j} \frac{\drm \zeta}{(\ell^2+\e^2+\zeta)(n^2+\eps^2+\zeta)} .
\end{align}
When $j=0$, since also $n\neq0$ by $U_0=0$, we can conclude that $b_{\eps}^{(0)}=0$ whereas $a_{\eps}^{(0)} =\partial_{yy}^{-1} \Pi_{\neq} \cR_\e U$ is a non-zero function that is not small in general. One can also deduce that there are no special cancellations between $a_{\eps}^{(j)}$ and $b_{\eps}^{(j)}$ for $j\neq 0$, meaning that indeed $\nu^{-2}w_{\eps}^{(j)}\Pi_0$ is a large rank-1 component in $P_{\nu,\eps}^{(j)}-Q_{\nu,\eps}^{(j)}$.
\end{remark}
\begin{proof}[Proof of Lemma \ref{lem:P-Q}] 
By the definitions \cref{auxiliaryRieszproj} and \cref{Rieszproj}, since $\cL_{\nu,\eps}=\cM_{\nu,\eps}-\im \eps \cR_{\eps}$, we observe that 
\begin{align}
    &P_{\nu,\e}^{(j)} - Q_{\nu,\e}^{(j)}=-\frac{\nu}{2\pi\im} \oint_{\Gamma_j} \Big((\cL_{\nu,\e} -\nu\zeta)^{-1} -(\cL_{\nu,\eps}-\nu\zeta)^{-1}(\cM_{\nu,\e} -\nu\zeta-\im \eps \cR_{\eps})(\cM_{\nu,\e} -\nu\zeta)^{-1} \Big)\drm \zeta \\
    &\;\,=-\frac{\im \nu \e}{2\pi\im} \oint_{\Gamma_j} (\cL_{\nu,\e} -\nu\zeta)^{-1} \cR_\e  (\cM_{\nu,\e} -\nu\zeta)^{-1} \drm \zeta.
\end{align}
Thus, by formulas \cref{resolventcL1} and \cref{cMresolvent} we find that
\begin{align}
   &P_{\nu,\e}^{(j)} - Q_{\nu,\e}^{(j)} \;\,\stackrel{\cref{resolventcL}}{=}-\frac{\im \nu \e}{2\pi\im} \oint_{\Gamma_j}  \big( \uno -  \im \e  (\cM_{\nu,\e} -\nu\zeta)^{-1} \cR_\e\big)^{-1}  (\cM_{\nu,\e} -\nu\zeta)^{-1}  \cR_\e  (\cM_{\nu,\e} -\nu\zeta)^{-1}\drm \zeta \\ \label{1and2}
&\stackrel{\cref{cMresolvent}}{=} -\frac{1}{2 \pi\im} \oint_{\Gamma_j}  \big( \uno -  \im \e  (\cM_{\nu,\e} -\nu\zeta)^{-1} \cR_\e\big)^{-1} ( \im \e(\cM_{\nu,\e} -\nu\zeta)^{-1}  \cR_\e)  (\cD_\e -\zeta)^{-1} \drm \zeta  \\
   &\qquad - \frac{1}{2\nu\pi\im} \oint_{\Gamma_j}  \frac{\big( \uno -  \im \e  (\cM_{\nu,\e} -\nu\zeta)^{-1}\cR_\e\big)^{-1}}{\e^2 + \zeta}   (\cM_{\nu,\e} -\nu\zeta)^{-1}  \cR_\e \circ u_{\e}(\zeta,y)  \Pi_0 \drm \zeta =: \cI_1 + \cI_2(y) \Pi_0\, .
\end{align}
By \cref{estimateFourierMultiplier,uniformcondition,Neumannestimate}, the term $\cI_1$ is bounded %uniformly in $j$
by
\begin{equation}\label{estimatefirstcirculation}
   \|\cI_1\|_{\cB(L^2,L^2)}
   \leq 2\frac{3 \frac{\eps}{\nu}\|U\|_{C^2}}{1-3 \frac{\eps}{\nu}\|U\|_{C^2}} <   9\frac{\e}{\nu}\|U\|_{C^2} \, ,\quad\text{provided }\ 3 \frac{\e}{\nu}\|U\|_{C^2} < \frac13\, ,
\end{equation}
where the last two inequalities are due to the inequality $\tfrac{2x}{1-x}<3x$ for $0\leq x< \tfrac13$. Moreover, by the Neumann expansion in \cref{resolventcL2} and using  $\|\Pi_0\cR_\eps\|_{\cB(L^2,\RR)}\leq \eps^2\|U\|^2_{L^2}$ from Lemma \ref{lem:fundamentalcancelation}, arguing as before we deduce that 
\begin{align}
\|\Pi_0\cI_1\|_{\cB(L^2,\RR)}\leq 9\frac{\eps^3}{\nu}\|U\|_{C^2}^3.
\end{align}
Hence, the operator
\begin{equation}\label{cF1}
\frac{\eps}{\nu}\mathcal{F}_{\nu,\eps}^{(j)}\coloneqq\mathcal{I}_1
\end{equation}
is bounded independently of $j$, $\e$ or $\nu$ with bounds as in \cref{bd:Pi0F1F2}.

On the other hand, the term $\cI_2(y)\Pi_0$ in \cref{1and2} is a rank-$1$ operator whose magnitude does not decrease for $\e$ small.
Let us inspect its components. By \cref{resolventcL2,uniformcondition}, we can use the Neumann series to isolate the perturbative part in the operator below as 
\begin{equation}\label{firstpiece}
   \big(\uno -  \im \e  (\cM_{\nu,\e} -\nu\zeta)^{-1} \cR_\e\big)^{-1}\eqqcolon\uno+\frac{\e}{\nu} \cA_1\,,
\end{equation}
for an operator $\cA_1$ that, by the arguments analogous to the ones used in \cref{estimatefirstcirculation}, is estimated by
\begin{equation}\label{firstpiecebd}
    \|\cA_1\|_{\cB(L^2,L^2)} <  9\|U\|_{C^2}, \quad \|\Pi_0\cA_1\|_{\cB(L^2,\RR)}< 9\eps^2\|U\|_{C^2}^3\,,\quad \text{provided }\    \frac{\e}{\nu}\|U\|_{C^2} < \frac19\, .
\end{equation}
Analogously, by \cref{cMresolvent} we split
\begin{equation}
\label{secondpiece}
    (\cM_{\nu,\e} -\nu\zeta)^{-1}  \cR_\e  =\frac1\nu \big(\big(\cD_{\eps}-\zeta\big)^{-1}\cR_\e   +\frac{\eps}{\nu}\cA_2\big)\,,\quad \text{where}\quad  \cA_2\coloneqq \frac{ u_\e(\zeta,y)}{\im (\e^2 + \zeta)}  \frac{1}{\e^2}\Pi_0\cR_\e\,.
\end{equation}
By \cref{estimatePi0cR} we know that $\|\e^{-2}\Pi_0\cR_\e\|_{\cB(L^2,\RR)} \leq \|U\|_{L^2} $. Hence, in view of \cref{estimateFourierMultiplier}, the definition of $u_{\eps}(\zeta,y)$ in Lemma \ref{lem:ResolventcM} and the fact that $\zeta \in \Gamma_j$, we get 
\begin{equation}\label{secondpiecebd}
     \| \cA_2\|_{\cB(L^2,L^2)}  \leq 4   \|U\|_{L^2}^2, \qquad \Pi_0\cA_2=0,
\end{equation}
where the last identity follows since $\Pi_0u_\eps(\zeta,y)=0$.
Therefore, using the splitting \cref{firstpiece,secondpiece},  we can write 
\begin{align}
\cI_2 &=- \frac{1}{2\nu^2\pi\im} \oint_{\Gamma_j}  \frac{\uno+\frac{\eps}{\nu}\cA_1}{\e^2 + \zeta}  \big((\cD_\eps-\zeta)^{-1} \cR_\e +\frac{\eps}{\nu}\cA_2\big)\circ u_{\e}(\zeta,y)  \drm \zeta \\ \label{cI2finale}
&=\frac{1}{\nu^2}\big(\widetilde{w}_{\eps}^{(j)}(y)+\frac{\eps}{\nu}\cI_3(y)\big)\,,
\end{align}
where we define
\begin{align}\label{def:wtildej}
     \widetilde{w}^{(j)}_{\e}(y)  \coloneqq -\frac{1}{2\pi\im} \oint_{\Gamma_j} \frac{(\cD_\e-\zeta)^{-1}\cR_\e}{\e^2+\zeta} u_\e(\zeta,y) \drm \zeta\, &=w^{(j)}_{\e}(y)-\frac{1}{2\pi\im} \oint_{\Gamma_j} \frac{(\cD_\e-\zeta)^{-1}\Pi_0\cR_\e}{\e^2+\zeta} u_\e(\zeta,y) \drm \zeta\,\\
     &\coloneqq w^{(j)}_{\e}(y)+\cI_4(y),
\end{align}
whereas 
$\cI^{(j)}_3$ contains all the terms where $\cA_1$ or $\cA_2$ appear at least one time. By
\cref{firstpiecebd,secondpiecebd} we get
\begin{equation}\label{estimatesecondcirculation}
   \| \cI_3\|_{L^2} +\eps^{-2}|\Pi_0\cI_3|\leq  C\,,
\end{equation}
for some constant $C>0$ depending on $U$ but independent of $j,\eps,\nu\in \NN_0$. 
To bound $\cI_4$ we note that
\begin{align}
    \cI_4 = &-\frac{1}{2\pi\im} \oint_{\Gamma_j} \frac{\Pi_0 \cR_\e}{(\e^2+\zeta)^2} u_\e(\zeta,y) \drm \zeta \\
    \stackrel{\cref{Cancelation}}{=} &-\frac{\e^2 }{2\pi \im} \oint_{\Gamma_j} \frac{1}{(\e^2+\zeta)^2}  \langle (\de_y +\e)^{-1} U, (\de_y +\e)^{-1}u_\e(\zeta,y) \rangle \drm \zeta \\
    \stackrel{\text{Lemma } \ref{lem:ResolventcM}}{=} &-\frac{\e^2 }{2\pi \im} \oint_{\Gamma_j} \frac{1}{(\e^2+\zeta)^2}  \langle (\de_y +\e)^{-1} U, (\de_y +\e)^{-1} (\cD_\e - \zeta)^{-1} U''\rangle \drm \zeta \\ \label{computeresidue}
    % \stackrel{\cref{FourierSeries}}
    {=} &-\e^2 \sum_{\ell\neq 0} \frac{\ell^2 |U_\ell|^2}{\ell^2+\e^2} \Big( \frac{1}{2\pi\im} \oint_{\Gamma_j} \frac{1}{(\e^2+\zeta)^2 (\ell^2 + \e^2 +\zeta)} \drm \zeta \Big) \,,
\end{align}
where the last formula holds for $j\neq 0$.
By the residue theorem we get 
        \begin{equation}
    \label{averagewjpf}
\cI_4 =\begin{cases}
    \dfrac{\e^2}{(\e^2+j^2)} (|U_j|^2+|U_{-j}|^2) &\textup{if }j\in \mathbb{N}_0\,,\\[2mm]
    \e^2 \| (\de_{y}+\e)^{-1} \de_{y}^{-1} U\|_{L^2}^2 &\textup{if }j=0\, .
    \end{cases}
    \end{equation}
In any case, we can conclude that $\|\cI_4\|_{L^2}\leq C \eps^2$ and therefore we can define 
\begin{equation}
\label{cF2}
 f_{\nu,\eps}^{(j)}=\cI_3+\frac{\nu}{\eps}\cI_4
\end{equation}
Since $\cI_4=\Pi_0\cI_4$, by the bound on $\cI_4$ and \cref{estimatesecondcirculation} we see that $f_{\nu,\eps}^{(j)}$ satisfy bounds in agreement with \cref{bd:Pi0F1F2}. Finally,
\cref{1and2,estimatefirstcirculation,cF1,cI2finale,estimatesecondcirculation,cF2} justify identity \cref{Katodistance} and the bound \cref{bd:Pi0F1F2}.
\end{proof}

\subsection{Kato reduction}

We are finally ready to define, following Kato \cite{Kato}, 
the isomorphism at the core of this section. Since here we are only interested %\footnote{Namely we do not care about carrying any special symmetry  from one basis to another. \maria{Qui non capisco cosa vuoi dire!}}
 in the correspondence between the ranges of the projections $Q_{\nu,\eps}^{(j)}$ and $P_{\nu,\eps}^{(j)}$,
we can simply define the desired mapping as 
\begin{equation}\label{Katomorphism}
    \cU_{\nu,\e}^{(j)} \coloneqq 
    \uno - P_{\nu,\e}^{(j)} - Q_{\nu,\e}^{(j)} \, ,
\end{equation}
which, since $(P_{\nu,\e}^{(j)})^2 = P_{\nu,\e}^{(j)}$ and $(Q_{\nu,\e}^{(j)})^2 = Q_{\nu,\e}^{(j)}$, maps 
\begin{equation}\label{RanRanKerKer}
 \mathrm{Ran}\, Q_{\nu,\e}^{(j)}\quad \stackrel{  {\scriptsize \begin{matrix}\cU_{\nu,\e}^{(j)} \end{matrix}} }{\longrightarrow}\quad  \cV_{\nu,\e}^{(j)} \coloneqq \mathrm{Ran}\, P_{\nu,\e}^{(j)}  \quad \stackrel{  {\scriptsize \begin{matrix}\cU_{\nu,\e}^{(j)} \end{matrix}} }{\longrightarrow} \quad \mathrm{Ran}\, Q_{\nu,\e}^{(j)}  \,. 
    %\quad\text{and}\quad \mathrm{Ker}\, Q_{\nu,\e}^{(j)} \stackrel{\cU_{\nu,\e}^{(j)}}{\longrightarrow} \mathrm{Ker}\, P_{\nu,\e}^{(j)}  \stackrel{\cU_{\nu,\e}^{(j)}}{\longrightarrow}\mathrm{Ker}\, Q_{\nu,\e}^{(j)} \, ,
\end{equation}
To show that $\cU_{\nu,\e}^{(j)}$ is an isomorphism, we observe that
the inverse of $\cU_{\nu,\e}^{(j)}$ is given by
\begin{equation}
    [\cU_{\nu,\e}^{(j)}]^{-1} =\Big( \uno - \big(P_{\nu,\e}^{(j)} - Q_{\nu,\e}^{(j)}\big)^2 \Big)^{-1} \big(\, \uno - P_{\nu,\e}^{(j)} - Q_{\nu,\e}^{(j)}\;) \, ,
\end{equation}
provided $\uno - \big(P_{\nu,\e}^{(j)} - Q_{\nu,\e}^{(j)}\big)^2$ is invertible. We justify the last passage in the following
\begin{lemma}
\label{lem:KatoIso}
    Under the hypothesis of Lemma \ref{lem:P-Q},  the operator $\uno - \big(P_{\nu,\e}^{(j)} - Q_{\nu,\e}^{(j)}\big)^2 $, with $P_{\nu,\e}^{(j)}$ and $ Q_{\nu,\e}^{(j)}$ in \cref{Rieszproj,auxiliaryRieszproj}, is invertible and the operator $\cU_{\nu,\e}^{(j)}$ in \cref{Katomorphism} is an isomorphism.
\end{lemma}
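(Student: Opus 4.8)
The plan is to exploit the explicit structure of $P^{(j)}_{\nu,\eps}-Q^{(j)}_{\nu,\eps}$ provided by Lemma~\ref{lem:P-Q}. Suppressing the indices for readability, I would write
\[
P-Q \;=\; \phi\,\Pi_0 + \cG\,, \qquad \phi \coloneqq \frac{1}{\nu^{2}}\Bigl( w^{(j)}_{\eps} + \frac{\eps}{\nu} f^{(j)}_{\nu,\eps} \Bigr)\,, \quad \cG \coloneqq \frac{\eps}{\nu}\cF^{(j)}_{\nu,\eps}\,.
\]
Here $\cG$ is small, $\|\cG\|_{\cB(L^2,L^2)} \le C\eps/\nu$, whereas $\phi\,\Pi_0$ is a rank-one operator of size $\cO(\nu^{-2})$ that is genuinely \emph{not} small; this is precisely the obstruction that forbids the naive estimate \cref{hopelessestimate}. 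To get around it I would use two facts. The first is the purely algebraic factorization $\uno - (P-Q)^2 = \bigl(\uno-(P-Q)\bigr)\bigl(\uno+(P-Q)\bigr)$, which reduces the problem to inverting $\uno \pm (P-Q)$. The second is the cancellation $\Pi_0 w^{(j)}_\eps = 0$, which is immediate from \cref{def:wj} because $(\cD_\eps-\zeta)^{-1}\Pi_{\neq}$ always produces a zero-average function; together with $|\Pi_0 f^{(j)}_{\nu,\eps}| \le C\eps\nu$ and $\|\Pi_0\cF^{(j)}_{\nu,\eps}\|_{\cB(L^2,\RR)} \le C\eps\nu$ from \cref{bd:Pi0F1F2}, this is the manifestation in this context of the fundamental cancellation $\Pi_0\cR_\eps = \cO(\eps^2)$ of Lemma~\ref{lem:fundamentalcancelation}. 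I would also record, directly from \cref{def:wj} and the uniform resolvent bound \cref{estimateFourierMultiplier} (using $|\eps^2+\zeta|\ge \tfrac12$ on $\Gamma_j$, since there $\eps^2+\zeta = -j^2 + \tfrac12 e^{\im\theta}$), that $\|w^{(j)}_\eps\|_{L^2}\le C$ and hence $\|\phi\|_{L^2}\le C\nu^{-2}$, with $C$ uniform in $j$.

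For the invertibility of $\uno \pm (P-Q)$ I would proceed as follows. After choosing $\delta_U$ small enough (depending only on $\|U\|_{H^2}$, which we may assume) one has $\|\cG\|_{\cB(L^2,L^2)} < \tfrac12$, so $\uno\pm\cG$ is invertible with $\|(\uno\pm\cG)^{-1}\|_{\cB(L^2,L^2)}\le 2$, and
\[
\uno \pm (P-Q) \;=\; (\uno\pm\cG)\Bigl( \uno \pm (\uno\pm\cG)^{-1}\phi\,\Pi_0 \Bigr)\,.
\]
Since $(\uno\pm\cG)^{-1}\phi\,\Pi_0$ is a rank-one update of the identity, I would invoke the Sherman--Morrison formula of Lemma~\ref{lem:ShermanMorrisoninf} (with $\cA=\uno$, $g=1$, $f=\pm(\uno\pm\cG)^{-1}\phi$): invertibility of the bracket is then equivalent to $1 \pm \Pi_0\bigl[(\uno\pm\cG)^{-1}\phi\bigr] \neq 0$. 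Expanding $(\uno\pm\cG)^{-1}$ as a Neumann series, the zeroth term is $\Pi_0\phi = \tfrac{\eps}{\nu^{3}}\Pi_0 f^{(j)}_{\nu,\eps}$ thanks to $\Pi_0 w^{(j)}_\eps=0$, hence $|\Pi_0\phi| \le C\eps^2/\nu^2$; each further term satisfies $|\Pi_0\cG^{n}\phi| \le \|\Pi_0\cG\|_{\cB(L^2,\RR)}\|\cG\|_{\cB(L^2,L^2)}^{n-1}\|\phi\|_{L^2} \le C\eps^{2}(C\eps/\nu)^{n-1}\nu^{-2}$, the crucial point being that the a priori large factor $\|\Pi_0\cG\| = \tfrac{\eps}{\nu}\|\Pi_0\cF^{(j)}_{\nu,\eps}\|_{\cB(L^2,\RR)} \le C\eps^2$ is compensated by the $\nu^{-2}$ carried by $\|\phi\|_{L^2}$. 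Summing, $\bigl|\Pi_0[(\uno\pm\cG)^{-1}\phi]\bigr| \le C(\eps/\nu)^2 \le C\delta_U^2 < \tfrac12$, so the Sherman--Morrison condition holds; therefore $\uno\pm(P-Q)$, and hence $\uno - (P-Q)^2$, are invertible. All constants are uniform in $j\in\NN_0$ because those of Lemma~\ref{lem:P-Q} are.

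Finally, to conclude that $\cU^{(j)}_{\nu,\eps}$ is an isomorphism I would use the elementary identity, valid for any two idempotents $P,Q$, that $(\uno-P-Q)^2 = \uno - (P-Q)^2$ (expand both sides and use $P^2=P$, $Q^2=Q$), together with the fact that $(P-Q)^2 = P - PQ - QP + Q$ commutes with $P$ and with $Q$, hence with $\cU^{(j)}_{\nu,\eps}=\uno-P-Q$. Since $\uno-(P-Q)^2$ is invertible, so is $\cU^{(j)}_{\nu,\eps}$, with $[\cU^{(j)}_{\nu,\eps}]^{-1} = (\uno - (P-Q)^2)^{-1}(\uno-P-Q)$ as anticipated before the statement; combined with the mapping property \cref{RanRanKerKer} and the commutation relations just noted, this gives that $\cU^{(j)}_{\nu,\eps}$ restricts to isomorphisms between $\Ran Q^{(j)}_{\nu,\eps}$ and $\Ran P^{(j)}_{\nu,\eps}$, and between $\Ker Q^{(j)}_{\nu,\eps}$ and $\Ker P^{(j)}_{\nu,\eps}$. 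The step I expect to be the main obstacle is the one flagged at the outset: $P^{(j)}_{\nu,\eps}-Q^{(j)}_{\nu,\eps}$ is not small, so the whole argument hinges on marrying the trivial factorization $\uno - A^2 = (\uno-A)(\uno+A)$ with rank-one Sherman--Morrison inversions, which only closes because of the cancellation $\Pi_0\cR_\eps = \cO(\eps^2)$.
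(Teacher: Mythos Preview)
Your proof is correct and follows essentially the same approach as the paper's. The only cosmetic difference is that you first factor out $(\uno\pm\cG)$ and then apply Sherman--Morrison to a rank-one perturbation of the identity, whereas the paper applies Sherman--Morrison directly to the rank-one update of $(\uno\pm\cT^{(j)}_{\nu,\eps})$ (your $\cG$ is exactly their $\cT^{(j)}_{\nu,\eps}$); the resulting invertibility condition and its verification via $\Pi_0 w^{(j)}_\eps=0$ and the bounds \cref{bd:Pi0F1F2} are identical. Your final paragraph actually supplies a bit more detail than the paper on passing from the invertibility of $\uno-(P-Q)^2$ to that of $\cU^{(j)}_{\nu,\eps}$.
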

\begin{proof}
Let us define the auxiliary operator
\begin{equation}\label{cT}
    \cT_{\nu,\e}^{(j)} \coloneqq P_{\nu,\e}^{(j)} - Q_{\nu,\e}^{(j)} -\frac1{\nu^2} (w^{(j)}_{\e}(y)+\frac{\eps}{\nu}f_{\nu,\eps}^{(j)}(y)) \Pi_0 \stackrel{\cref{Katodistance}}{=} \frac{\e}{\nu} \cF_{\nu,\eps}^{(j)}  ,
    % \quad \text{with}\quad \|  \cT_{\nu,\e}^{(j)} \|_{L^2 \to L^2} \stackrel{\cref{Katodistance}}{\leq} C \nu^{-2}\e\, ,
\end{equation}
with $\cF_{\nu,\eps}^{(j)}$ and $f_{\nu,\eps}^{(j)}$ given in Lemma \ref{lem:P-Q}, frow which we deduce the following bounds 
\begin{equation}\label{boundIpmcT}
   \| \cT^{(j)}_{\nu,\e} \|_{\cB(L^2,L^2)} \leq  C\frac{\e}{\nu} \eqqcolon \gamma \,, \qquad  \| \Pi_0\cT^{(j)}_{\nu,\e} \|_{\cB(L^2,\RR)}\leq C\eps^2,
\end{equation}
where $C>1$ is a constant independent of $\eps,\nu,j$.
If
$\gamma <1$, the operators 
$\uno \pm  \cT_{\nu,\e}^{(j)}$ are invertible with inverse given by a Neumann series and bounded by
\begin{equation}\label{boundIpmcTinv}
    \| (\uno \pm  \cT_{\nu,\e}^{(j)})^{-1} \|_{\cB(L^2,L^2)} \leq \frac{1}{1-\gamma}\,.
\end{equation}
Now, let us consider the operator 
\begin{equation}\label{prodottonotevole(A^2-B^2)}
    \uno - (P_{\nu,\e}^{(j)} - Q_{\nu,\e}^{(j)} )^2 = \big(  \uno - (P_{\nu,\e}^{(j)} - Q_{\nu,\e}^{(j)} )\big) \big(  \uno + (P_{\nu,\e}^{(j)} - Q_{\nu,\e}^{(j)} ) \big)\, .
\end{equation}
By \cref{cT}, the two factors are both rank-$1$ updates of invertible operators when written as follows
\begin{equation}
       \uno \pm (P_{\nu,\e}^{(j)} - Q_{\nu,\e}^{(j)} ) = \underbrace{\uno \pm \cT_{\nu,\e}^{(j)} }_{\text{invertible}}\pm \underbrace{\nu^{-2} W_\eps^{(j)} \Pi_0}_{\text{rank }1} \, , \qquad W_{\eps}^{(j)}(y)=w_\e^{(j)}(y)+\eps\nu^{-1}f_{\nu,\eps}^{(j)}(y).
\end{equation}
We then apply  Lemma \ref{lem:ShermanMorrisoninf}, with 
\begin{equation}
    H=L^2(\TT)\,,\quad \cA = \uno \pm \cT_{\nu,\e}^{(j)}\, ,\quad f= \pm \nu^{-2} W_\e^{(j)}\quad\text{and}\quad g=1\ \textup{ (so that }\langle g, \cdot \rangle = \Pi_0\textup{)}\, ,
\end{equation}
to obtain
\begin{equation}\label{tojustify}
    \big( \uno \pm (P_{\nu,\e}^{(j)} - Q_{\nu,\e}^{(j)} ) \big)^{-1} \stackrel{\cref{originalShermanMorrison}}{=}\big( \uno \pm \cT_{\nu,\e}^{(j)}\e) \big)^{-1} \mp \nu^{-2}\frac{ \big( \uno \pm \cT_{\nu,\e}^{(j)} \big)^{-1} W_{\e}^{(j)}(y) \Pi_0 \big( \uno \pm \cT_{\nu,\e}^{(j)} \big)^{-1}}{1\pm \nu^{-2} \Pi_0  \big( \uno \pm \cT_{\nu,\e}^{(j)} \big)^{-1} W_{\e}^{(j)}(y) }\, .
\end{equation}
To be able to apply  Lemma \ref{lem:ShermanMorrisoninf} we  need to check that $\big|\nu^{-2} \Pi_0  \big( \uno \pm \cT_{\nu,\e}^{(j)} \big)^{-1} W_{\e}^{(j)}(y) \big|\neq 1 $. We have
\begin{align}
    \Pi_0  \big( \uno \pm \cT_{\nu,\e}^{(j)} \big)^{-1} W_{\e}^{(j)} =\Pi_0W_{\eps}^{(j)}+\sum_{n=1}^\infty (\mp 1)^n\Pi_0 (\cT_{\nu,\eps}^{(j)})^nW_{\eps}^{(j)}.
\end{align}
Since $\Pi_0 w_{\eps}^{(j)}=0$ and exploiting the bounds \cref{boundIpmcT,bd:Pi0F1F2}, we deduce that 
\begin{align}
     |\Pi_0  \big( \uno \pm \cT_{\nu,\e}^{(j)} \big)^{-1} W_{\e}^{(j)}|\leq C\eps^2,
\end{align}
% for a suitable constant $C>0$ depending only on $U$.
% which, by \cref{averagewj,boundIpmcT,boundIpmcTinv} is bounded by
% \begin{equation}
%     \big|  \Pi_0  \big( \uno \pm \cT_{\nu,\e}^{(j)} \big)^{-1} w_{\e}^{(j)}(y) \big| \leq 8\e^2\|U''\|_{L^2}^2 + \frac{\gamma}{1-\gamma}  < \nu^3\,,
% \end{equation}
% provided $\e$ is sufficiently small or $\nu$ sufficiently big.
 provided $\eps \nu^{-1}$ is sufficiently small, which is guaranteed upon restricting further $\delta_U$ in Lemma \ref{lem:P-Q} if necessary. This justifies \cref{tojustify}.
% Similarly one proves that the second factor in \cref{prodottonotevole} is invertible, and 
Consequently the whole operator $\uno - \big(P_{\nu,\e}^{(j)} - Q_{\nu,\e}^{(j)}\big)^2$, as well as
the mapping $\cU_{\nu,\e}^{(j)}$ in \cref{Katomorphism}, is bounded with bounded inverse.
% Finally, one directly verifies that
% \begin{align}
%     &\cU_{\nu,\e}^{(j)} P_{\nu,\e}^{(j)} = - Q_{\nu,\e}^{(j)}P_{\nu,\e}^{(j)}\, ,\quad \cU_{\nu,\e}^{(j)} (\uno - P_{\nu,\e}^{(j)}) = (\uno - Q_{\nu,\e}^{(j)}) (\uno - P_{\nu,\e}^{(j)})\, , \\  &\cU_{\nu,\e}^{(j)} Q_{\nu,\e}^{(j)} = - P_{\nu,\e}^{(j)}Q_{\nu,\e}^{(j)}\, ,\quad  \cU_{\nu,\e}^{(j)} (\uno - Q_{\nu,\e}^{(j)}) = (\uno - P_{\nu,\e}^{(j)}) (\uno - Q_{\nu,\e}^{(j)})\, ,
% \end{align}
% to obtain \cref{RanRanKerKer}.
\end{proof}

We are ready to characterize the $\cL_{\nu,\e}$-invariant subspaces $\cV^{(j)}_{\nu,\e} $ defined in \cref{RanRanKerKer}, proving in particular  Theorem \ref{thm:linmain} except for the expansion \cref{unstableeigenvalue}.
\begin{lemma}
\label{lem:katoeigen}Let $\e,\nu>0$ be in the open regime where the operator $\cU_{\nu,\e}^{(j)}$ in \cref{Katomorphism} is an isomorphism. 
Let $j\in\NN_0$ and $\Gamma_j$ as in \cref{Gammaj}. 
Then, the $\cL_{\nu,\e}$-invariant subspace $\cV^{(j)}_{\nu,\e} \coloneqq \mathrm{Ran}\,P_{\nu,\e}^{(j)} $
is given by
\begin{equation}\label{Katobasis}
   \cV^{(j)}_{\nu,\e} = \begin{cases}
   \mathrm{span}\, \big\{ P_{\nu,\e}^{(0)}[ U(y) -\im \nu \e]\coloneqq V_{\nu,\eps}^{(0)} \big\} & \mbox{if }j= 0\,,\\[2mm]
   \mathrm{span}\, \big\{  P_{\nu,\e}^{(j)} e^{\im j y} \,,\; P_{\nu,\e}^{(j)} e^{-\im j y}\big\} & \mbox{if }j\neq 0\, ,
   \end{cases}
\end{equation}
and one has $L^2(\TT) = \bigoplus_{j\in \NN_0} \cV^{(j)}_{\nu,\e}$. Moreover, 
\begin{align}
    \label{bd:eigen}\|V_{\nu,\eps}^{(0)}(y) - (U(y)-\im \nu \eps) \|_{L^2}+\|P_{\nu,\e}^{(j)} e^{\pm\im j y} - e^{\pm\im j y} \|_{L^2} \leq C \frac{\e}{\nu}\, ,\quad  \forall j>0\,,
\end{align}
for a constant $C>0$ independent of $\eps,\nu,j$.
\end{lemma}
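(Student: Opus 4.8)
The plan is to establish the three assertions of Lemma~\ref{lem:katoeigen} in the order they are stated: the identification~\cref{Katobasis} of the invariant subspaces $\cV^{(j)}_{\nu,\e}$, then the quantitative bound~\cref{bd:eigen}, and finally the splitting $L^2(\TT)=\bigoplus_{j\in\NN_0}\cV^{(j)}_{\nu,\e}$.

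For~\cref{Katobasis} I would transport the explicit description of $\mathrm{Ran}\,Q^{(j)}_{\nu,\e}$ through the Kato isomorphism. By Lemma~\ref{unperturbedspectrum}, $\mathrm{Ran}\,Q^{(0)}_{\nu,\e}=\mathrm{span}\{U(y)-\im\nu\e\}$ while $\mathrm{Ran}\,Q^{(j)}_{\nu,\e}=\mathrm{span}\{e^{\im j y},e^{-\im j y}\}$ for $j\geq1$. If $v\in\mathrm{Ran}\,Q^{(j)}_{\nu,\e}$ then $Q^{(j)}_{\nu,\e}v=v$, so by~\cref{Katomorphism} $\cU^{(j)}_{\nu,\e}v=-P^{(j)}_{\nu,\e}v$; since Lemma~\ref{lem:KatoIso} and~\cref{RanRanKerKer} guarantee that $\cU^{(j)}_{\nu,\e}$ restricts to an isomorphism $\mathrm{Ran}\,Q^{(j)}_{\nu,\e}\to\cV^{(j)}_{\nu,\e}$, the operator $P^{(j)}_{\nu,\e}$ does so as well. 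Applying $P^{(j)}_{\nu,\e}$ to the generating vectors above yields~\cref{Katobasis}, with $V^{(0)}_{\nu,\e}:=P^{(0)}_{\nu,\e}[U-\im\nu\e]$, and in particular these vectors are nonzero.

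For the bound~\cref{bd:eigen} the key observation is that, since $Q^{(0)}_{\nu,\e}[U-\im\nu\e]=U-\im\nu\e$ and $Q^{(j)}_{\nu,\e}e^{\pm\im j y}=e^{\pm\im j y}$ for $j\geq 1$, one has $V^{(0)}_{\nu,\e}-(U-\im\nu\e)=(P^{(0)}_{\nu,\e}-Q^{(0)}_{\nu,\e})(U-\im\nu\e)$ and $P^{(j)}_{\nu,\e}e^{\pm\im j y}-e^{\pm\im j y}=(P^{(j)}_{\nu,\e}-Q^{(j)}_{\nu,\e})e^{\pm\im j y}$, so the estimate reduces to feeding these explicit vectors into the decomposition~\cref{Katodistance}. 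For $j\geq1$ the rank-one term in~\cref{Katodistance} is annihilated because $\Pi_0 e^{\pm\im j y}=0$, leaving $\tfrac\e\nu\cF^{(j)}_{\nu,\e}e^{\pm\im j y}$, which is $O(\e/\nu)$ by~\cref{bd:Pi0F1F2}. For $j=0$ one uses $\Pi_0(U-\im\nu\e)=-\im\nu\e$ (recall $\Pi_0 U=0$): the rank-one term then contributes $-\tfrac{\im\e}{\nu}\bigl(w^{(0)}_\e+\tfrac\e\nu f^{(0)}_{\nu,\e}\bigr)$, and since $w^{(0)}_\e=\de_{yy}^{-1}\Pi_{\neq}\cR_\e U$ (as computed in the discussion of $w^{(j)}_\e$ after Lemma~\ref{lem:P-Q}) is bounded in $L^2$ by $\|\cR_\e\|_{\cB(L^2,L^2)}\|U\|_{L^2}$ via~\cref{boundcR}, while $\|f^{(0)}_{\nu,\e}\|_{L^2}\leq C$ by~\cref{bd:Pi0F1F2}, this stays $O(\e/\nu)$; the leftover $\tfrac\e\nu\cF^{(0)}_{\nu,\e}(U-\im\nu\e)$ is $O(\e/\nu)$ as well. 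Constants are absorbed using $\nu<\|\de_y^{-1}U\|_{L^2}\leq C(\|U\|_{H^3})$ and $\e\leq\delta_0\nu$, which hold under the running hypotheses.

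The last and most delicate assertion is $L^2(\TT)=\bigoplus_j\cV^{(j)}_{\nu,\e}$. That the algebraic sum is direct is standard: since the loops $\nu\Gamma_j$ of~\cref{Gammaj} enclose pairwise disjoint portions of $\sigma_{L^2}(\cL_{\nu,\e})$, the Riesz projections satisfy $P^{(i)}_{\nu,\e}P^{(j)}_{\nu,\e}=\delta_{ij}P^{(j)}_{\nu,\e}$, so the $\cV^{(j)}_{\nu,\e}=\mathrm{Ran}\,P^{(j)}_{\nu,\e}$ are independent. For exhaustion I would first note $\sigma_{L^2}(\cL_{\nu,\e})\subset\bigcup_j\nu B_j$, where $B_j$ is the open disk of radius $\tfrac12$ about $-(j^2+\e^2)$: the estimate of Lemma~\ref{lem:inverseMR}, whose proof only uses $\mathrm{dist}(\zeta,\sigma_{L^2}(\cD_\e))\geq\tfrac12$ and $|\e^2+\zeta|\geq\tfrac12$, holds at \emph{every} $\zeta$ outside $\bigcup_j B_j$, so $\cL_{\nu,\e}-\nu\zeta$ is there invertible via the Neumann series~\cref{resolventcL2}; and $\cL_{\nu,\e}$ has compact resolvent (it is a bounded perturbation of $\nu\cD_\e$), so its spectrum is discrete and the part inside $\nu B_j$ is exactly $\sigma$ of the restriction of $\cL_{\nu,\e}$ to $\cV^{(j)}_{\nu,\e}$, of total multiplicity $\dim\cV^{(j)}_{\nu,\e}\in\{1,2\}$. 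To turn this into a genuine decomposition I would invoke completeness of the root subspaces: $\cL_{\nu,\e}=\nu\cD_\e-\tfrac{\im}\e U''(y)\Pi_0-\im\e\cR_\e$ is a \emph{bounded} perturbation of the self-adjoint $\nu\cD_\e$, whose inverse is trace class (as $\sum_j(j^2+\e^2)^{-1}<\infty$) and whose spectral gaps $\nu(2j+1)$ diverge; by the classical theorem on completeness of root subspaces for such perturbations (of Keldysh type; see e.g.\ Gohberg--Krein), the root subspaces of $\cL_{\nu,\e}$ --- which by the localization above coincide with the $\cV^{(j)}_{\nu,\e}$ --- span $L^2(\TT)$, and with the uniform gap growth this is in fact a Riesz basis of subspaces, giving $L^2(\TT)=\bigoplus_j\cV^{(j)}_{\nu,\e}$. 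A more self-contained alternative, staying within the tools of this section, is to prove $\sum_{j\leq N}P^{(j)}_{\nu,\e}\to\uno$ strongly by writing this partial sum as the Riesz integral over a loop $\Gamma^{(N)}$ enclosing exactly $\{-\nu(j^2+\e^2)\}_{0\leq j\leq N}$ (on which the bound of Lemma~\ref{lem:inverseMR} again applies, since $\Gamma^{(N)}$ stays at distance $\geq\nu/2$ from $\sigma_{L^2}(\cM_{\nu,\e})$), comparing with $\sum_{j\leq N}Q^{(j)}_{\nu,\e}\to\uno$ --- which holds because the eigenbasis $\{U-\im\nu\e,e^{\pm\im j y}\}$ of $\cM_{\nu,\e}$ from Lemma~\ref{unperturbedspectrum} differs from the orthonormal Fourier basis only in its zeroth vector, hence is a Riesz basis --- and controlling the difference through the Neumann estimate together with the fact that $(\cD_\e-\zeta)^{-1}$ acts by arbitrarily small Fourier multipliers on any fixed trigonometric polynomial when $\zeta$ runs over the far arcs of $\Gamma^{(N)}$. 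I expect precisely this step --- promoting the spectral localization and the family of per-$j$ isomorphisms $\cU^{(j)}_{\nu,\e}$ into a global direct-sum decomposition --- to be the main obstacle, since the bounds on $P^{(j)}_{\nu,\e}-Q^{(j)}_{\nu,\e}$ supplied by Lemma~\ref{lem:P-Q} are uniform in $j$ but do not decay, so one cannot merely sum them.
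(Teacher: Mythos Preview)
Your treatment of \cref{Katobasis} and \cref{bd:eigen} is essentially identical to the paper's: transport the eigenvectors of $\cM_{\nu,\e}$ through $\cU^{(j)}_{\nu,\e}$ (equivalently, apply $P^{(j)}_{\nu,\e}$), then feed these vectors into the decomposition of Lemma~\ref{lem:P-Q}, killing the rank-one piece via $\Pi_0 e^{\pm\im j y}=0$ for $j\geq1$ and via $\Pi_0(U-\im\nu\e)=-\im\nu\e$ for $j=0$. Your remark that absorbing the $\e^2$ contribution into $C\e/\nu$ implicitly uses a bound on $\nu$ is correct; the paper silently relies on this (it is guaranteed by the instability hypothesis $\nu<\|\de_y^{-1}U\|_{L^2}$ in Theorem~\ref{th:mainLin}, though not restated in the lemma).

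Where you diverge from the paper is the direct-sum statement $L^2=\bigoplus_j\cV^{(j)}_{\nu,\e}$. The paper's proof is terse here: it observes that the isomorphism forces $\dim\cV^{(j)}_{\nu,\e}$ to match $\dim\mathrm{Ran}\,Q^{(j)}_{\nu,\e}$, hence the multiplicities inside each $\nu\Gamma_j$ are correct, and leaves the exhaustion of $L^2$ implicit (presumably relying on $\cL_{\nu,\e}$ having compact resolvent together with the localization of the full spectrum inside $\bigcup_j\nu B_j$). Your two proposed routes---invoking a Keldysh-type completeness theorem, or proving $\sum_{j\leq N}P^{(j)}_{\nu,\e}\to\uno$ strongly by comparing with the analogous sum for $Q^{(j)}_{\nu,\e}$ along growing contours---are both more detailed than what the paper supplies and would close the argument rigorously. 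The second route stays within the tools already developed; your worry that the uniform-but-nondecaying bounds on $P^{(j)}-Q^{(j)}$ prevent naive summation is exactly right, and the fix you sketch (exploiting the decay of $(\cD_\e-\zeta)^{-1}$ on fixed trigonometric polynomials along far arcs) is the correct mechanism.
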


\begin{proof}
We observe that, by Lemmas \ref{unperturbedspectrum} and \ref{auxiliaryRieszproj},
\begin{equation}\label{trivialproj}
    Q_{\nu,\e}^{(0)} [ U(y) -\im \nu \e ] = U(y) -\im \nu \e \quad \text{and} \quad Q_{\nu,\e}^{(j)}e^{\pm \im j y} = e^{\pm \im j y} \,,\quad \forall j\in \NN\, .
\end{equation}
Then, for every $j \in \NN_0$, by \cref{RanRanKerKer} and since the operator $\cU_{\nu,\e}^{(j)}$ in \cref{Katomorphism} is an isomorphism,  
\begin{equation}
    \cV^{(j)}_{\nu,\e} = \mathrm{span}\, \big\{  \cU_{\nu,\e}^{(j)} e^{\im j y} \,,\; \cU_{\nu,\e}^{(j)} e^{-\im j y}\big\}\, ,\quad \cV^{(0)}_{\nu,\e} = \mathrm{span}\, \big\{ \cU_{\nu,\e}^{(0)}[ U(y) -\im \nu \e] \big\}  \,,
\end{equation}
where, by \cref{Katomorphism,trivialproj},
\begin{equation}\label{eqn:rankpj}
   \cU_{\nu,\e}^{(j)} e^{\pm \im j y} = -P_{\nu,\e}^{(j)} e^{\pm \im j y} \, ,\quad \cU_{\nu,\e}^{(0)}[\im U(y) + \nu \e] = -P_{\nu,\e}^{(0)}[ U(y) -\im \nu \e]\, ,
\end{equation}
from which we obtain \cref{Katobasis}. Finally, to prove  \cref{bd:eigen}, note that by \cref{Katodistance} and $\Pi_0U=0$, we have
\begin{align}
V_{\nu,\eps}^{(0)}(y) - (U(y)-\im \nu \eps) =(P_{\nu,\eps}^{(0)}-Q_{\nu,\eps}^{(0)})(U(y)-\im \nu \eps)\\
=-\im\frac{\eps}{\nu}\big(w_{\eps}^{0}(y)+\frac{\eps}{\nu}\cF_2^{(0)}(y)\big)+\frac{\eps}{\nu}\cF_1^{(0)}(U(y)-\im \nu \eps).
\end{align}
Hence, from the identity above and Lemma \ref{lem:P-Q}, we deduce a bound complying with \cref{bd:eigen}. Arguing analogously for the other eigenvectors, we prove \cref{bd:eigen} also for all the other $j>0$.

From \cref{eqn:rankpj}, for every $j \in \mathbb N$ we see that $P_{\nu,\e}^{(j)}$ has at least rank $2$, so that there are at least $2$ eigenvaules encircled by $\nu \Gamma_j$. In fact the isomorphism characterizes the spectrum, therefore there are exactly two eigenvalues (or possibly a double eigenvalue) in each circle, as claimed in \cref{restofthespectrum}. 
\end{proof}

Theorem \ref{thm:linmain} follows from Lemmas \ref{lem:KatoIso} and \ref{lem:katoeigen}  except for the expansion in \cref{unstableeigenvalue}, which is proved in the next section.

\smallskip

\subsection{The unstable eigenvalue}
\label{sec:unstable}
The goal of this section is to prove the remaining point in Theorem \ref{thm:linmain}, namely the expansion of the unstable eigenvalue $\lambda_{\nu,\e}^{(0)}$ of $\cL_{\nu,\e}$ stated in \cref{unstableeigenvalue} in Theorem \ref{thm:linmain}. In the sequel, since we only deal with the case $j=0$ we omit the superscripts $(0)$ from the notation.

To find the expression of $\lambda_{\nu,\eps}$ we recall that, by Lemma \ref{lem:katoeigen} and $ V_{\nu,\e}
  =P_{\nu,\eps}
  (U(y)-\im \nu\eps) $
% and expand the function on the right-hand side of
we have
\begin{equation}
    \label{eq:trivlambda}
\lambda_{\nu,\eps}V_{\nu,\eps}=\cL_{\nu,\eps}V_{\nu,\eps}=\cL_{\nu,\eps}P_{\nu,\eps}[U(y)-\im \nu \eps].
\end{equation} 
Taking the $L^2(\TT)$ inner product with $V_{\nu,\eps}$, we know that 
\begin{equation}
    \label{eq:trivlambda1}
\lambda_{\nu,\eps}=\frac{1}{\|V_{\nu,\eps}\|^2_{L^2}}\langle\cL_{\nu,\eps}V_{\nu,\eps},V_{\nu,\eps}\rangle.
\end{equation}
We would then need to expand the expression above. However, we will just expand $\cL_{\nu,\eps}V_{\nu,\eps}$ and then take the inner product with $V_{\nu,\eps}$, which gives slightly suboptimal estimates in the errors but simplifies the computations. 

Since the eigenvalue will scale with a factor $\eps^2$, we need to keep track of the expansions up to second order. To isolate the leading order operators, we find it convenient to introduce the following shorthand\smallskip

 \noindent{\bf Notations.}   Let $\ell \in \NN$ and $p_1,\dots,p_\ell,q_1,\dots,q_\ell\in \ZZ$, $\kappa_i\coloneqq\eps^{p_i}\nu^{q_i}$, $i=1,\dots,\ell$. Let $f\in L^2(\TT)$  and $\cT:L^2(\TT)\to L^2(\TT)$ be a bounded operator.
    We say that $\cT=\cO_{\rm op}(\kappa_1+\dots+\kappa_\ell)$ if $$\|\cT\|_{\cB(L^2,L^2)}\leq C (\kappa_1 + \dots + \kappa_\ell)$$ for an absolute $C>0$ independent of $\eps$ and $\nu$. 
    We write $f=\cO_{\rm fun}(\kappa_1+\dots+ \kappa_\ell)$ if $$\|f\|_{L^2}\leq C(\kappa_1+\dots+\kappa_\ell)$$
    for an absolute $C>0$ independent of $\eps$ and $\nu$.
    \smallskip

%Then, we need compute the action of $\cL_{\nu,\e}$ on the eigenvector $V_{\nu,\eps}^{(0)}$ in \cref{Katobasis} associated to $\lambda^{(0)}_{\nu,\eps}$, where we recall that
With the above notation, we deduce from Lemma \ref{lem:katoeigen} that
\begin{equation}
    \label{exp:Vnueps0}
  V_{\nu,\e}%^{(0)}
  =P_{\nu,\eps}%^{(0)}
  (U(y)-\im \nu\eps) = (U(y)-\im \nu\eps)+\cO_{\rm fun}\big(\frac{\eps}{\nu}\big).
\end{equation}
Since  the circulation of the identity vanishes, our analysis starts with the %following simple
observation
\begin{align}\label{circulateuno}
  \cL_{\nu,\e} P_{\nu,\e} &=  -\frac{\nu}{2\pi\im} \oint_{\Gamma_0}  (\cL_{\nu,\e} -\nu\zeta + \nu\zeta) (\cL_{\nu,\e} -\nu\zeta)^{-1} \drm \zeta  \\
     &= -\frac{\nu}{2\pi\im} \oint_{\Gamma_0}  \nu\zeta (\cL_{\nu,\e} -\nu\zeta)^{-1} \drm \zeta\\
     &=-\frac{\nu}{2\pi\im} \oint_{\Gamma_{0}}  \nu\zeta \big( \uno -  \im \e  (\cM_{\nu,\e} -\nu\zeta)^{-1} \cR_\e\big)^{-1}  (\cM_{\nu,\e} -\nu\zeta)^{-1} %[U(y) -\im \nu \e] 
     \drm \zeta \,, 
\end{align}
%where we used that
% To avoid carrying factors of $\nu$ inside the integrals in \cref{circulateuno}, we reparametrize the loop and (with a slight abuse in notation) we denote 
% $$\lambda=\nu\zeta, \qquad \text{ and } \qquad  \Gamma_\nu=\nu\Gamma_0.$$
%The proof of \cref{unstableeigenvalue} is carried over in several steps below: in Step 1, we simplify the analysis by showing that, as a direct consequence of results in the previous sections, some terms can be easily seen to be of order $\eps^3$. In Step 2, we extract the leading order terms in $\lambda_{\nu,\eps}$ and we control some delicate errors that require a separate analysis. Finally, in Step 3 we use the these results to prove \cref{unstableeigenvalue} in \cref{thm:linmain}.
%\subsection*{$\diamond$ Step 1: Isolating some $\eps^3$-terms}
% %We compute that
% \begin{align}
%     \cL_{\nu,\e} &V_{\nu,\e}  \stackrel{\cref{eq:trivlambda}}{=} -\frac{\nu}{2\pi \im} \oint_{\Gamma_0} \nu\zeta(\cL_{\nu,\e} -\nu\zeta)^{-1} [U(y) -\im  \nu \e] \drm \zeta  \\
%     &\stackrel{\cref{cLnueps}}{=} -\frac{\nu}{2\pi \im}  \oint_{\Gamma_0} \nu\zeta (\cM_{\nu,\e} -\nu\zeta- \im \e \cR_\e)^{-1} [ U(y) -\im\nu \e] \drm\zeta \\ 
%      &\;\,=\;\, -\frac{\nu}{2\pi\im} \oint_{\Gamma_{0}}  \nu\zeta \big( \uno -  \im \e  (\cM_{\nu,\e} -\nu\zeta)^{-1} \cR_\e\big)^{-1}  (\cM_{\nu,\e} -\nu\zeta)^{-1} [U(y) -\im \nu \e] \drm \zeta \,, 
% \end{align}
where in the last step we used that $\nu\zeta$ lies in the resolvent set of $\cM_{\nu,\e}$ by construction and the operator $\uno -  \im \e  (\cM_{\nu,\e} -\nu\zeta)^{-1} \cR_\e$ is invertible by Lemma \ref{lem:inverseMR}.  By Lemma \ref{unperturbedspectrum}, we know that 
\begin{equation}
\label{eigenpairsunplem0}
    (\cM_{\nu,\e} -\nu\zeta)^{-1} [ U(y) -\im \nu \e]=-\frac{1}{\nu(\eps^2+\zeta)}[ U(y) -\im \nu \e].
\end{equation}
Therefore, exploiting the Neumann series representation in \cref{resolventcL2} up to order $2$,   using Lemmas \ref{lem:inverseMR} and \ref{lem:ResolventcM}, and combining \cref{eigenpairsunplem0} with the bound in \cref{uniformcondition}  we get
\begin{align}
     &\cL_{\nu,\e} V_{\nu,\e}  \\
&\;\,\stackrel{\cref{eigenpairsunplem0}}{=}\,  -\frac{\nu}{2\pi\im} \oint_{\Gamma_0} \frac{ -\zeta}{\e^2+\zeta} \big( \uno -  \im \e  (\cM_{\nu,\e} -\nu\zeta)^{-1} \cR_\e\big)^{-1}   [U(y) -\im \nu \e] \drm \zeta \\
    \label{exp:CL} &\stackrel{\cref{resolventcL2}}{=} \frac{\nu}{2\pi\im} \oint_{\Gamma_0} \frac{ \zeta}{\e^2+\zeta}   [ U(y) -\im\nu \e] \drm \zeta \\
     &\qquad+\;\frac{\nu}{2\pi\im} \oint_{\Gamma_0} \frac{ \zeta}{\e^2+\zeta} \Big(\im \e  \big(\nu\cD_{\eps}-\nu\zeta\big)^{-1} \cR_\e  +\frac{u_{\e}(\zeta,y)}{\nu^2(\e^2+\zeta)}\Pi_0 \cR_\e \Big)   [U(y) -\im \nu \e] \drm \zeta \\
     &\qquad+\;\frac{\nu}{2\pi\im} \oint_{\Gamma_0} \frac{ \zeta}{\e^2+\zeta} \Big(\im \e  \big(\nu\cD_{\eps}-\nu\zeta\big)^{-1} \cR_\e  +\frac{u_{\e}(\zeta,y)}{\nu^2(\e^2+\zeta)}\Pi_0 \cR_\e \Big)^2   [ U(y)-\im \nu \e] \drm \zeta  +  \cO_{\mathrm{fun}}\big(\frac{\e^3}{\nu^2}\big)\\
     &\quad \coloneqq v_1+v_2+v_3+\cO_{\mathrm{fun}}\big(\frac{\e^3}{\nu^2}\big)
     \end{align}
     Within the expansion above, we still have terms of order  $\eps^3$ that are to be absorbed in the error term. Indeed, by recalling that the loop $\Gamma_0$ encircles a portion of the complex plane containing $-\e^2$, we apply the residue theorem to the circulation of a simple pole that gives $v_1$ to obtain
     \begin{equation}
       v_1 = \frac{\nu}{2\pi\im} \oint_{\Gamma_0} \frac{ \zeta}{\e^2+\zeta}   [ U(y) -\im\nu \e] \drm \zeta  = -\nu\e^2 \big(U(y) -\im\nu \e\big) \, .
     \end{equation}
     In view of \cref{exp:Vnueps0}, we conclude that
     \begin{equation}
     \label{err:v1sub}
         v_1=-\nu\eps^2V_{\nu,\eps}+\cO_{\rm fun}(\eps^3).
     \end{equation}
      $v_2$ contains the leading order correction to the eigenvalue, arising from the part exhibiting $\Pi_0\cR_\eps$. Here, observe that by Lemma \ref{lem:fundamentalcancelation} we have $\Pi_0\cR_{\eps}1=0$. Thus, we isolate the leading order term in $v_2$ and rewrite $v_2= \frac{\eps^2}{\nu} V_{\rm lead} + \eps^3 V_{\rm err}^1$ where
      \begin{align}%cicciput
      \label{def:Vlead}V_{\rm lead}&\coloneqq \frac{1}{2\pi\eps^2 \im}\oint_{\Gamma_0}  \frac{\zeta u_{\e}(\zeta,y)}{(\e^2+\zeta)^2} \Pi_0 \cR_\e [U(y)]\dd \zeta\\
        V_{\rm err}^1&=\frac{\nu}{2\pi\eps^2\im}\oint_{\Gamma_0} \frac{\im\zeta}{\e^2 +\zeta}  \big(\nu\cD_{\eps}-\nu\zeta\big)^{-1} \cR_\e [U(y) -\im  \nu \e]\dd \zeta.
    \end{align}
     For $v_3$, by Lemmas \ref{lem:fundamentalcancelation} and \ref{lem:ResolventcM}, we know that  $u_\eps(\lambda,y)\Pi_0\cR_{\eps}=\cO_{\rm op}(\eps^2)$. On the other hand, the terms containing $(\nu\cD_{\eps}-\lambda)^{-1}$ can contribute to the order of the pole. Hence, for the moment we can only consider as errors the terms containing at least one $\Pi_0\cR_{\eps}$, and we get
     \begin{align}
         v_3=\frac{\eps^2\nu}{2\pi \im}\oint_{\Gamma_0}\frac{-\zeta}{\eps^2+\zeta}(\nu\cD_{\eps}-\nu \zeta)^{-1}\cR_{\eps}(\nu\cD_{\eps}-\nu \zeta)^{-1}\cR_{\eps}[U(y)-\im \nu \eps]\dd \zeta+\cO_{\rm fun}\big(\frac{\eps^3}{\nu},\frac{\eps^4}{\nu^3}\big).
     \end{align}
     We denote 
\begin{align}
    \label{def:Verr}& V_{\rm err}^2=-\frac{\nu}{2\pi\eps\im}\oint_{\Gamma_0} \frac{\zeta}{\e^2 +\zeta} \Big(\big(\nu\cD_{\eps}-\nu\zeta\big)^{-1} \cR_\e\Big)^2 [U(y) -\im  \nu \e]\dd \zeta.
    %V_{\rm err}^2\coloneqq \frac{1}{2\pi\eps^2\im}\oint_{\Gamma_0} \frac{\im\zeta}{\e^2 +\zeta}  \big(\nu\cD_{\eps}-\nu\zeta\big)^{-1} \cR_\e \big(\uno+\im \eps \big(\nu\cD_{\eps}-\nu\zeta\big)^{-1} \cR_\e\big)[U(y) -\im  \nu \e]\dd \zeta. TODO
\end{align}  Then, on account of the observations above and since $\eps\nu^{-1}<1$, we rewrite 
\begin{align}
    \label{contolinearefinale}
     \cL_{\nu,\eps}V_{\nu,\eps}&= -\nu\e^2V_{\nu,\eps}+\frac{\eps^2}{\nu} V_{\rm lead} + \eps^3 (V_{\rm err}^1+ V_{\rm err}^2)+\cO_{\rm fun}\big(\eps^3(1+\frac{1}{\nu^2})\big) 
    %  \\
    %  &\qquad\;+ \frac1{2\pi\im}\oint_{\Gamma}  \frac{\lambda g_{\nu,\e}(\lambda,y)}{(-\nu\e^2-\lambda)^2} \Pi_0 \cR_\e [\im U(y) + \underbrace{\nu \e}_{=0\text{ by }\cref{Cancelation}}] \drm \lambda  \\
    % &\qquad\; + \frac{\e^2}{2\pi\im}\oint_{\Gamma} \frac{\im\lambda}{-\nu\e^2 -\lambda}  \big(\nu\cD_{\eps}-\lambda\big)^{-1} \cR_\e\big(\nu\cD_{\eps}-\lambda\big)^{-1} \cR_\e  U(y)   + \cO(\e^3) \, .
\end{align}
We next claim an expansion of $V_{\rm lead}$, containing the desired expansion for $\lambda_{\nu,\eps}$, and a bound on $V_{\rm err}$ as
\begin{equation}\label{residuoimportante}
   V_{\rm lead} = \| \de_y^{-1} U \|_{L^2}^2V_{\nu,\eps}+ \cO_{\rm fun}\big(\eps(1+\frac{1}{\nu})\big)\, ,
\end{equation}
  \begin{equation}
\label{bd:Verr}
V_{\rm err}^1+V_{\rm err}^2=\cO_{\rm fun}(1),
    \end{equation}
    The proof is postponed, since the bound on $V_{\rm err}^1$ and $V_{\rm err}^2$ does not directly follow from a direct inspection of the operators involved, but it requires a more delicate analysis.% that we carry on in the next step.

 Appealing to \cref{eq:trivlambda,contolinearefinale,residuoimportante,bd:Verr}, we know that 
\begin{align}
\lambda_{\nu,\eps}V_{\nu,\eps}=\frac{\eps^2}{\nu}\big(\|\de_{y}^{-1}U\|^2_{L^2}-\nu^2\big)V_{\nu,\eps}+\frac{\eps^2}{\nu}\cO_{\rm fun}\big(\eps(1+\frac{1}{\nu})\big)+\cO_{\rm fun}\big(\eps^3(1+\frac{1}{\nu^2})\big).
\end{align}
By taking the $L^2$-inner product with $V_{\nu,\eps}/\|V_{\nu,\eps}\|_{L^2}^2$, we deduce that
\begin{equation}
   \lambda_{\nu,\e} = \frac{\eps^2}{\nu}\Big(\|\de_{y}^{-1}U\|^2_{L^2}-\nu^2+\cO\big(\eps(\nu+\nu^{-1})\big)\Big)
\end{equation}
which proves \cref{unstableeigenvalue}. This concludes the proof of Theorem \ref{thm:linmain}, provided the two claims are proved.
\begin{remark}
    The error of size $\cO(\eps\nu)$ is related to the fact that we are directly approximating $V_{\nu,\eps}$ with $U(y)-\im \nu \eps$, as done for instance in \cref{err:v1sub}. We anticipate that with the normal forms technique we know that this error is in fact an error of size $\cO(\eps^2)$. To obtain this result also here, it is enough to expand the inner product determining $\lambda_{\nu,\eps}$, keeping in mind that  $V_{\nu,\eps}$ satisfies an expansion as in \cref{exp:CL} without the factor $\zeta$ in the numerator. 
\end{remark}
% \subsection*{$\diamond$ Step 2: Extracting the leading order correction}
% We start with the computation of $V_{\rm lead},$ for which we have the following.

\begin{proof}[Proof of \cref{residuoimportante}]
From Lemma \ref{lem:fundamentalcancelation}, we know that 
\begin{equation}
    \frac{1}{\eps^2}\Pi_0\cR_\eps[U(y)]=\|(\de_y+\eps)^{-1}U\|^2_{L^2}.
\end{equation}
Thus, by the properties of $u_\eps$ in Lemma \ref{lem:ResolventcM}, we get
\begin{align}
V_{\rm lead} &=   \|(\de_y+\e)^{-1} U\|^2_{L^2}\frac{1}{2\pi \im}\oint_{\Gamma_0}  \frac{(\zeta + \e^2 - \e^2) u_{\e}(\zeta,y)}{(\e^2+\zeta)^2} \drm \zeta \\
&=    \|(\de_y+\e)^{-1} U\|^2_{L^2}\frac{1}{2\pi\im}\oint_{\Gamma_0}  \frac{u_{\e}(\zeta,y)}{\e^2+\zeta} \drm \zeta + \cO_{\rm fun}(\e^2) \\
&=  \|(\de_y+\e)^{-1} U\|^2_{L^2}u_{\e}(-\e^2,y)  + \cO_{\rm fun}(\e^2)\, ,
\end{align}
where in the last step we used the residue theorem. By  the definition of $u_\eps$ in Lemma \ref{lem:ResolventcM},  we see that 
\begin{equation}
u_{\e}(-\e^2,y) =  U(y) \, .
\end{equation}
Finally, expanding also $\|(\de_y+\e)^{-1}U\|^2_{L^2}=\|\de_y^{-1}U\|^2_{L^2}+\cO(\eps^2) $ and using \cref{exp:Vnueps0}, we get 
% \begin{align}
%     V_{\rm lead}=  \|\de_y^{-1} U\|^2_{L^2}V_{\nu,\eps}+\cO_{\rm fun}\big(\eps(1+\frac{1}{\nu})\big)
% \end{align}
% thus proving the desired bound 
\cref{residuoimportante}.
\end{proof}
% We now turn our attention to $V_{\rm err}$, for which we have the following.
% \begin{lemma}
%     \label{lem:bdVerr}
%     Let $V_{\rm err}$ be defined as in \cref{def:Verr}. Then
%     \begin{equation}
% \label{bd:Verr}
% V_{\rm err}=\cO_{\rm fun}(1).
%     \end{equation}
% \end{lemma}
\begin{proof}[First part of the proof of \cref{bd:Verr}, $V_{\rm err}^1=\cO_{\rm fun}(1)$]
%    Looking at the definition of $V_{\rm err}$ in \cref{def:Verr}, we first control the piece involving the $\uno$, that we denote as 
    % \begin{align}
    %     V_{\rm err}^1=\frac{\nu}{2\pi\eps^2\im}\oint_{\Gamma_0} \frac{\im\zeta}{\e^2 +\zeta}  \big(\nu\cD_{\eps}-\nu\zeta\big)^{-1} \cR_\e [U(y) -\im  \nu \e]\dd \zeta.
    % \end{align}
    From the definition of $\cR_\eps$ in \cref{cMcR}, note that 
    \begin{align}
        \cR_{\eps}[U(y)-\im \nu \eps]&=\Pi_{\neq}\big[U^2(y)+U''(-\de_{yy}+\eps^2)^{-1}U(y)-\im \nu \eps U(y)\big]+\Pi_0\cR_{\eps}[U(y)-\im \nu \eps]\\
        &\coloneqq \Pi_{\neq} f_U+\Pi_0\cR_\eps[U(y)-\im \nu \eps]
    \end{align}
    Since $(\nu\cD_\eps-\nu\zeta)^{-1}\Pi_0 g=-\nu^{-1}(\eps^2+\zeta)^{-1}\Pi_0 g$, we deduce that 
    \begin{align}
        V_{\rm err}^1=\frac{1}{2\pi\eps^2\im}\oint_{\Gamma_0} \frac{\im\zeta}{\e^2 +\zeta}  \big(\cD_{\eps}-\zeta\big)^{-1} \Pi_{\neq}f_U\dd \zeta+\frac{1}{2\pi \nu \im}\oint_{\Gamma_0} \frac{-\im\zeta}{(\e^2 +\zeta)^2} \frac{\Pi_0\cR_\eps}{\eps^2}[U(y)-\im \nu \eps] \dd \lambda.
    \end{align}
    For the first circulation, thanks to the presence of $\Pi_{\neq}$, observe that $(\cD_\eps-\zeta)^{-1}\Pi_{\neq}$ does not change the order of the pole. Therefore, we gain from the factor $\zeta$ in the numerator of the first circulation, resulting in a term of order $\cO_{\rm fun}(1/\nu).$ Instead, the second circulation has a pole of order $2$, meaning that we do not gain anything when computing the integral. However, thanks to Lemma \ref{lem:fundamentalcancelation}, we know that  $\Pi_0\cR_\eps=\cO_{\rm op}(\eps^2)$. We can then conclude that $V_{\rm err}^1=\cO_{\rm fun}(1).$
\end{proof}
\begin{proof}[Second part of the proof of \cref{bd:Verr}, $V_{\rm err}^2=\cO_{\rm fun}(1)$]
    For  $V_{\rm err}^2$
    % , which we denote as 
    %     \begin{align}
    %     V_{\rm err}^2=-\frac{\nu}{2\pi\eps\im}\oint_{\Gamma_0} \frac{\zeta}{\e^2 +\zeta} \Big(\big(\nu\cD_{\eps}-\nu\zeta\big)^{-1} \cR_\e\Big)^2 [U(y) -\im  \nu \e]\dd \zeta,
    % \end{align}
    we can proceed analogously (and we also gain a factor of $\eps$). More precisely, denoting $$\cA=(\nu\cD_{\eps}-\nu\zeta)^{-1}\cR_\eps,$$ one can split
    \begin{align}
\cA^2=(\Pi_{\neq}\cA)\circ(\Pi_{\neq}\cA)+(\Pi_{\neq}\cA)\circ(\Pi_{0}\cA)+(\Pi_{0}\cA)\circ(\Pi_{\neq}\cA)+(\Pi_{0}\cA)\circ(\Pi_{0}\cA).
    \end{align}
    Then, since $\Pi_0\cA=-(\nu \eps^2+\nu\zeta)^{-1}\Pi_0\cR_\eps$, we get 
    \begin{align}
        \cA^2=\,&(\Pi_{\neq}\cA)\circ(\Pi_{\neq}\cA)-\frac{1}{\nu(\eps^2+\zeta)}(\Pi_{\neq}\cA)\circ(\Pi_{0}\cR_{\eps})\\
        &-\frac{1}{\nu(\eps^2+\zeta)}(\Pi_{0}\cR_\eps)\circ(\Pi_{\neq}\cA)+\frac{1}{\nu^2(\eps^2+\zeta)^2}(\Pi_{0}\cR_{\eps})\circ(\Pi_{0}\cR_{\eps}).
    \end{align}
When $\Pi_{\neq}$ occurs twice, we are not changing the order of the pole and therefore we gain a factor $\eps^2$ from the residue theorem. When $\Pi_0$ occurs once, we do not gain from the integral but we know that $\Pi_0\cR_{\eps}=\cO_{\rm op}(\eps^2)$ by Lemma \ref{lem:fundamentalcancelation}. The last term with two $\Pi_0$ has zero residue and therefore we conclude that 
%\begin{equation}
    $V_{\rm err}^2=\cO_{\rm fun}(\eps(1+\frac{1}{\nu})).$
%\end{equation}
%This proves the lemma since $V_{\rm err}=V_{\rm err}^1+V_{\rm err}^2.$
\end{proof}

\begin{remark}
\label{rem:Taylor} 
    To end this section, let us explain how to prove the Taylor dispersion mechanism \cite{taylor1953dispersion} introduced in Remark \ref{rem:Taylorintro}. Following the same reductions done to arrive at $\cL_{\nu,\eps}$, we know that one needs to study properties of the operator
    \begin{equation}
        \mathcal{T}_{\nu,\eps}=\nu\cD_{\eps}-\im \eps U(y).
    \end{equation}
        Using the (standard) Kato's approach, it is enough to follow the steps in this section with the changes
        \begin{equation}
            \cM_{\nu,\eps}\to \nu \cD_{\eps}, \qquad \cR_{\eps}\to U(y), \qquad U(y)-\im \nu \eps \to 1.
        \end{equation}
        Then, all the proofs greatly simplify\footnote{In the sense that $\im \eps U(y)$ is really a perturbation of $\nu\cD_{\eps}$ since $\Pi_0U=0$ and $\eps\ll \nu$} and one can compute the correction to the largest eigenvalue of $\nu\cD_{\eps}$, given by $-\nu\eps^2$, by following the computations in \cref{sec:unstable}. In particular, the leading order term in the correction of the eigenvalue will be given by the circulation of the order $n=2$ in the Neumann series for $(\uno-\im\eps (\nu\cD_{\eps}-\nu\zeta)^{-1}\circ U(y))^{-1}(1)$ (the analogue of $v_3$ defined in \cref{sec:unstable}). This is 
        \begin{align}
            &-\frac{\nu \eps^2}{2\pi \im }\oint_{\Gamma_0}\frac{\zeta}{\eps^2+\zeta}\Big((\nu \cD_{\eps}-\nu \zeta)\circ U(y)\Big)^{2}(1)\dd \zeta
            % \\
            % &\quad =-\frac{ \eps^2}{\nu}\frac{1}{2\pi \im }\oint_{\Gamma_0}\frac{\zeta}{\eps^2+\zeta}( \de_{yy}-(\eps^2+ \zeta))^{-1}\big(U(y)(\de_{yy}-(\eps^2+\zeta))^{-1} U(y)\big)\dd \zeta.
        \end{align}
        The $\Pi_{\neq}$ part of the function above does not change the order of the pole, and it is thus a term of order $\cO_{\rm fun}(\eps^4/\nu)$. Instead, for the $\Pi_0$ component note that the circulation becomes 
        \begin{equation}
            -\frac{ \eps^2}{\nu}\frac{1}{2\pi \im }\oint_{\Gamma_0}\frac{\zeta}{-(\eps^2+\zeta)^2}\langle U(y),(\de_{yy}-(\eps^2+\zeta))^{-1} U(y)\rangle\dd \zeta.
        \end{equation}
        Applying the residue theorem and integrating by parts, we see that we get
        \begin{equation}
            -\frac{\eps^2}{\nu}\|\de_y^{-1}U\|^2_{L^2}+\cO_{\rm fun}(\nu^{-1}\eps^3).
        \end{equation}
        Being the unperturbed eigenvector $1$, this is the desired correction of the eigenvalue and is in agreement with the result announced in Remark \ref{rem:Taylorintro}. 
        
        Let us mention that a case where the Taylor dispersion property of passive scalars can be exploited in fluid dynamics is for perturbations around the Couette flow $U(y)=y$ in $\RR^2$. Indeed, the linearized problem involves exactly an operator like $\mathcal{T}_{\nu,\eps}$ and Arbon and Bedrossian \cite{Arbon25CMP} were recently able to prove that Taylor dispersion holds also at the nonlinear level (using a hypocoercivity method).
        
        Finally,  in Kinetic Theory the diffusion is replaced by a more complicated collision operator with a larger kernel. In some cases, the Kato's spectral approach used in \cite{Gervais24Hydrodynamic} to deduce hydrodynamic limits should readily imply the Taylor dispersion mechanism in the Boltzmann equation, which was first proved in \cite{BCZD24Taylor} with hypocoercivity methods (and for more general collision kernels). 
\end{remark}
\section{Normal-form approach}\label{sezione-forma-normale}
In this section we perform various conjugations which lead to a complete block-diagonalization of the operator $\cL_{\nu,\e}$ in \cref{cLnueps}. We denote by $H^s \equiv H^s(\TT)$, $s \geq 0$ the standard $L^2$-based Sobolev space with norm $\| \cdot \|_s \equiv \| \cdot \|_{H^s}$. For simplicity, we require slightly more regularity on the background shear flow as we assume $U\in H^{s+2}(\TT)$ for some $s > \tfrac12$. In this way
 the operator $\cR_{\e} $ in \cref{cMcR} is a bounded endomorphism of $H^{s}(\TT)$, the latter being an algebra. We denote by $\| \cdot \|_{2 \times 2}$ the standard Hilbert-Schmidt norm on $2 \times 2$ matrices. 
The main result of this section is the following
\begin{theorem}\label{teorema coniugio cal Lk}
Let $s > \frac12$ and $U \in H^{s + 2}(\TT)$ be a shear flow profile with zero average. There exists constants $\delta_0 \in (0,1)$ and $C>1$ depending only on $\|U\|_{s+ 2}$ and $s$, such that, if $\varepsilon \nu^{- 1} \leq \delta_0$ then there exists an invertible map $\Phi_{\nu, \varepsilon} \in {\mathcal B}(H^s, H^s)$ with inverse $\Phi_{\nu, \varepsilon}^{- 1} \in {\mathcal B}(H^s, H^s)$ such that 
$$
{\mathcal N}_{\nu, \varepsilon} \coloneqq  \Phi_{\nu, \varepsilon}^{- 1} {\mathcal L}_{\nu, \varepsilon} \Phi_{\nu, \varepsilon} 
$$
is block diagonal, i.e. 
$$
{\mathcal N}_{\nu, \varepsilon}[h] = \lambda^{(0)}_{\nu,\eps} \Pi_0 h + \sum_{j \in {\mathbb N}} \Pi_j {\mathcal N}_{\nu, \varepsilon} \Pi_j [\Pi_j h]\,,
$$
with the projections $\Pi_j$  in \cref{def:Pij}. 
The $2\times 2$ matrices $\Pi_j\cN_{\nu,\eps}\Pi_j$ and the maps $\Phi_{\nu, \varepsilon}^{\pm 1}$ satisfy the estimates
\begin{equation}\label{stime mathcal Dk}
\begin{aligned}
& \| \Pi_j {\mathcal N}_{\nu, \varepsilon} \Pi_j + \nu j^2 \uno \|_{2 \times 2} \leq C\varepsilon , \qquad \| \Phi_{\nu, \varepsilon}^{\pm 1} \|_{{\mathcal B}(H^\sigma, H^\sigma)} \leq C\eps^{-1}\,, 
\end{aligned}
\end{equation}
for all $0 \leq \sigma \leq s$ and $j\in \NN$.
Moreover,  the eigenpair $(\lambda^{(0)}_{\nu,\eps},V_{\nu,\eps}^{(0)})$
satisfies
\begin{align}
\label{exp:NFfinal}
    \lambda^{(0)}_{\nu,\eps}=\frac{\e^2}{\nu} \Big( \|\de_y^{-1}U\|^2_{L^2} - \nu^2 + \cO \big(\frac{\e}{\nu}+ \eps^2\big)\Big), \qquad     \| V_{\nu,\e}^{(0)} -(U(y) -\im \nu \e)\|_{s+2}\leq C  (\frac{\eps^2}{\nu^{2}}+\eps^2 ) \, .
\end{align}
In general, we have $\sigma_{L^2}(\cL_{\nu, \varepsilon})=\{\lambda_{\nu,\eps}^{(\pm j)}\}_{j\in \NN}$ with 
$$
|\lambda_{\nu, \varepsilon}^{(\pm j)} + \nu j^2| \leq C \varepsilon, \quad \forall j \in\NN.
$$
\end{theorem}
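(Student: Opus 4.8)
\emph{Setup and strategy.} The plan is to realize the two-step reduction sketched in \cref{sec:Katonormal}, working on $H^s\equiv H^s(\TT)$ with $s>\tfrac12$ and using the splitting $H^s=\mathbb{C}\oplus H^s_0$ associated with $\Pi_0,\Pi_{\neq}$. First I would write $\cL_{\nu,\eps}$ as the $2\times2$ operator matrix $\mathtt{L}_{\nu,\eps}$: a direct computation using $\Pi_0 U=\Pi_0 U''=0$ and the cancellation $\Pi_0\cR_\eps=\cO(\eps^2)$ of \cref{lem:fundamentalcancelation} shows that its $(0,0)$-entry is the scalar $-\nu\eps^2$, its $(0,\neq)$-entry is a row vector of size $\cO(\eps^3)$, its $(\neq,0)$-entry is the large column vector $-\tfrac{\im}{\eps}U''-\im\eps U$ of size $\cO(\eps^{-1})$, and $\Pi_{\neq}\cL_{\nu,\eps}\Pi_{\neq}=\nu\de_{yy}+\cO(\eps)$. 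Since $\cL_{\nu,\eps}$ is a bounded perturbation of $\nu(\de_{yy}-\eps^2)$ on $L^2(\TT)$, it has compact resolvent and pure point spectrum, and by elliptic regularity $\sigma_{L^2}(\cL_{\nu,\eps})=\sigma_{H^s}(\cL_{\nu,\eps})$, so it suffices to argue on $H^s$.

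\emph{Step 1: decoupling the unstable mode (cf.\ \cref{normal-form-modo-zero}).} I would conjugate $\mathtt{L}_{\nu,\eps}$ by $\mathtt{T}=\begin{bmatrix}1 & X^\top \\ Y & \mathrm{Id}\end{bmatrix}$ with $X,Y\in H^s_0$, invertible as soon as $\langle X,Y\rangle_{L^2}\neq1$ (\cref{invertibilita mathtt T}), so as to reach the block form \eqref{eq:introNF}. Demanding that the two off-diagonal blocks of the conjugate vanish produces a coupled pair of quadratic (Riccati-type) equations for $X,Y$, which I would solve by a contraction argument (\cref{lem:vheq,lem:hheq}). The delicate ingredient is the $\cO(\eps^{-1})$ term $\tfrac{\im}{\eps}U''\Pi_0$: one uses that $U''\Pi_0$ is rank one and nilpotent ($\Pi_0 U''=0$, so $(U''\Pi_0)^2=0$) together with $\Pi_0\cR_\eps=\cO(\eps^2)$ to ensure that the singular factor never forces a genuine $\eps^{-1}$-blowup in the relevant compositions. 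The outcome is $X=\cO(\eps^3\nu^{-1})$ and $Y=-\tfrac{\im}{\nu\eps}U+(\text{lower order})=\cO(\nu^{-1}\eps^{-1})$, so that $\langle X,Y\rangle=\cO(\eps^2\nu^{-2})<1$ for $\eps\nu^{-1}\le\delta_0$ and $\mathtt{T}^{\pm1}=\mathrm{Id}+\cO(\eps^{-1})$. The $(0,0)$-entry of the conjugate is $\lambda^{(0)}_{\nu,\eps}$; expanding the quadratic form in $X,Y$ that defines it (\cref{primo coniugio normal-form}) — where the $\cO(\eps^{-1})\times\cO(\eps^{-1})$ products must be tracked precisely — and using $\de_{yy}^{-1}U''=U$ together with $\Pi_0\cR_\eps[U]=\eps^2\|\de_y^{-1}U\|_{L^2}^2+\cO(\eps^3)$ gives the expansion of $\lambda^{(0)}_{\nu,\eps}$ in \cref{exp:NFfinal}. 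Finally, the unstable eigenvector of the conjugate is $(1,\boldsymbol{0})^\top$, so that of $\cL_{\nu,\eps}$ is $\mathtt{T}^{-1}(1,\boldsymbol{0})^\top\propto(1,-Y)^\top$; the leading behaviour $Y=-\tfrac{\im}{\nu\eps}U+(\text{lower order})$ yields, after normalization, $V^{(0)}_{\nu,\eps}=U-\im\nu\eps+(\text{small})$ with the stated $H^{s+2}$-bound.

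\emph{Step 2: block-diagonalizing the stable part (cf.\ \cref{block-diagonalization-stable-part}).} The operator $\cL^{(1)}_{\nu,\eps}$ on $H^s_0$ produced in Step 1 is, for $U\in H^{s+2}$, of the form $\nu\de_{yy}+\cQ$ with $\cQ=\cQ(X,Y)$ satisfying $|\cQ|_s=\cO(\eps)$ in the block-diagonal decay norm $|\cdot|_s$ of \cref{def-decay-norm} — the point being that the large $Y$ is absorbed thanks to the same cancellations, so that $\cL^{(1)}_{\nu,\eps}$ is a genuine $\cO(\eps)$-perturbation of $\nu\de_{yy}$. Since $\eps\ll\nu$ while the unperturbed eigenvalues $-\nu j^2$, $j\in\NN$, have gaps $\ge\nu|j^2-l^2|\ge\nu$ for $j\ne l$, there are no small divisors, and a standard normal-form contraction in $|\cdot|_s$ produces $\Psi:H^s_0\to H^s_0$ with $|\Psi|_s=\cO(\eps\nu^{-1})$ such that $(\mathrm{Id}+\Psi)^{-1}\cL^{(1)}_{\nu,\eps}(\mathrm{Id}+\Psi)=\nu\de_{yy}+\cZ$, with $\cZ$ block-diagonal ($2\times2$ blocks, cf.\ \cref{def:blockdiagonal}) and $|\cZ|_s=\cO(\eps)$ (\cref{prop-block-diag-contraction}). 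Embedding the decay norm into the operator norms on $H^\sigma$, $0\le\sigma\le s$, yields the bounds in \cref{stime mathcal Dk}; the $\eps^{-1}$ there is inherited entirely from $\mathtt{T}$, while $(\mathrm{Id}+\Psi)^{\pm1}=\mathrm{Id}+\cO(\eps\nu^{-1})$.

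\emph{Conclusion and main difficulty.} Taking $\Phi_{\nu,\eps}$ to be the composition of the conjugation by $\mathtt{T}$ with $\mathrm{Id}+\Psi$ on the stable block, one obtains $\cN_{\nu,\eps}=\Phi_{\nu,\eps}^{-1}\cL_{\nu,\eps}\Phi_{\nu,\eps}=\lambda^{(0)}_{\nu,\eps}\Pi_0+(\nu\de_{yy}+\cZ)\Pi_{\neq}$, which is block-diagonal; reading off $\Pi_j\cN_{\nu,\eps}\Pi_j=-\nu j^2\,\mathrm{Id}+\cO(\eps)$ proves the first estimate of \cref{stime mathcal Dk}, and since $\Phi_{\nu,\eps}$ is an isomorphism the spectrum of $\cL_{\nu,\eps}$ is the union of $\{\lambda^{(0)}_{\nu,\eps}\}$ with the spectra of the $2\times2$ blocks, i.e.\ $\sigma_{L^2}(\cL_{\nu,\eps})=\{\lambda^{(0)}_{\nu,\eps}\}\cup\{\lambda^{(\pm j)}_{\nu,\eps}\}_{j\in\NN}$ with $|\lambda^{(\pm j)}_{\nu,\eps}+\nu j^2|\le C\eps$. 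The main obstacle is, as in the Kato approach, the non-perturbative $\cO(\eps^{-1})$ term: it makes $\mathtt{T}$ and $Y$ ``far from the identity'', so that neither the Riccati fixed point in Step 1 nor the claim that $\cL^{(1)}_{\nu,\eps}$ remains an $\cO(\eps)$-perturbation of $\nu\de_{yy}$ can be closed by naive smallness; both rest on the rank-one/nilpotent structure of $U''\Pi_0$ and on the cancellation $\Pi_0\cR_\eps=\cO(\eps^2)$.
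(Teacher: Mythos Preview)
Your proposal is correct and follows essentially the same two-step normal-form strategy as the paper: first conjugating by the matrix $\mathtt{T}$ with $X=\cO(\eps^3\nu^{-1})$, $Y=\cO(\eps^{-1}\nu^{-1})$ obtained via quadratic fixed-point arguments to decouple the unstable direction, then block-diagonalizing $\cL^{(1)}_{\nu,\eps}=\nu\de_{yy}+\cQ$ with $|\cQ|_s=\cO(\eps)$ by a contraction in the decay norm. The only cosmetic difference is that in computing $\lambda^{(0)}_{\nu,\eps}$ the paper does not invoke $\Pi_0\cR_\eps[U]$ directly but instead expands the quadratic form $a^{X,Y}_{\nu,\eps}$ algebraically, arriving at $\langle U,(\de_{yy}-\eps^2)^{-1}U\rangle=-\|\de_y^{-1}U\|_{L^2}^2+\cO(\eps^2)$ after a cancellation between $-\langle U''+\eps^2 U,[\cA^*_{\nu,\eps}]^{-1}(\de_{yy}-\eps^2)^{-1}U\rangle$ and the corresponding piece of $\langle\cL^\sharp_{\nu,\eps}Y,X\rangle$; this is of course the same underlying identity you cite.
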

Note that if we choose $s = 1$, i.e. $U \in H^3(\TT)$, the theorem above clearly implies Theorem \ref{th:mainLin}. 

We split the proof of Theorem \ref{teorema coniugio cal Lk} into two subsections: we first identify the leading order transformations removing the parts of order $\eps^{-1}$. Such transformation will also isolate the unstable part of the spectrum from its stable part. Then, we proceed with a more standard perturbative argument to block-diagonalize the full operator. 

 To ease the notation, in the proofs we will write $    a\lesssim b$ where $a\leq Cb$ for a constant $C>0$ depending only on $\|U\|_{s + 2}$ and $s$, but independent of $\nu,\eps$. To invert some operators, we  need  \begin{equation}\label{invertibilityconditionstep1}
     \, \e \leq \delta_0 \nu\,,
\end{equation} with $\delta_0=\delta_0(s,\|U\|_{{s+2}})$ sufficiently small. The precise smallness condition can be deduced from the rather explicit bounds given below. However, we do not keep track of this precise constant. We will abuse in notation and always denote with $\delta_0$ the necessary smallness parameter needed in each different statement. The $\delta_0$ in Theorem \ref{teorema coniugio cal Lk} is clearly the smallest of these ones.
 
\subsection{Decoupling of stable and unstable modes}\label{normal-form-modo-zero}
We aim to separate the one-dimensional unstable eigenspace of the operator $\cL_{\nu,\e}$ in \cref{cLnueps} from its stable invariant space of codimension $1$.
We shall exploit the splitting, for every $s\geq 0$,
\begin{equation}
    \label{thesplittingHs}
    H^s(\TT)=\CC\oplus H^s_0(\TT)\,,\quad  \text{where}\quad  H^s_0(\TT) \coloneqq  \big\{ f\in H^s(\TT)\;:\; \Pi_0 f =0 \big\} %\quad\text{and}\quad    C^\infty_0(\TT) \coloneqq  \big\{ f\in C^\infty(\TT)\;:\; \Pi_0 f =0 \big\}\, .
\end{equation}
%and $C^\infty(\TT) =\CC\oplus  C^\infty_0(\TT)  $ 
is the closed subspace of $H^s(\TT)$ formed by zero-average functions.
According to the splitting \cref{thesplittingHs}, we regard any bounded operator $\cA : H^{s+t}(\TT) \to H^s(\TT)$ of order $t\geq 0$
as a matrix $\mathtt{A}: \CC\oplus H^s_0(\TT) \to \CC\oplus H^s_0(\TT)$ of the form
\begin{align}\label{matrixrepresentation}
\mathtt{A} \coloneqq  \begin{bmatrix} \Pi_0 \cA[1] & (\Pi_{\neq} \cA^*[1])^\top  \\
\Pi_{\neq} \cA[1] & \Pi_{\neq} \cA \Pi_{\neq} \end{bmatrix}\,, 
\end{align}
where $\cA^*$ is the adjoint operator of $\cA$ with respect to the $L^2$-inner product and $f^\top$ is the bounded linear functional that sends
$L^2(\TT)\ni g \mapsto \langle g, f \rangle  \in \CC
$, for every $f\in L^2(\TT)$. We point out that the mapping $f \mapsto f^\top $ is antilinear, namely $(\zeta f)^\top = \overline{\zeta} f^\top $ for every $\zeta\in \CC$.
The % next lemma binds the 
action of the matrix $\ttA$ in \cref{matrixrepresentation} is linked to its associated operator $\cA$: more precisely, for every  $\cA \in \mathcal{B}\big( H^{s+t}(\TT); H^s(\TT)\big)$, the associated matrix $\mathtt{A} $ in \cref{matrixrepresentation} satisfies
\begin{equation}\label{matrixaction}
\mathtt{A} \begin{bmatrix} \Pi_0 h \\ \Pi_{\neq} h(y) \end{bmatrix} = \begin{bmatrix} \Pi_0 \cA[ h ]  \\
\Pi_{\neq} \cA[ h ](y) \end{bmatrix}\, \qquad \mbox{for every } h \in C^\infty(\TT).
\end{equation}
%\begin{proof}
Indeed,
\begin{align}\footnotesize 
\begin{bmatrix} \Pi_0 \cA[1] & (\Pi_{\neq} \cA^*[1])^\top  \\
\Pi_{\neq} \cA[1] & \Pi_{\neq} \cA \Pi_{\neq} \end{bmatrix}\begin{bmatrix} \Pi_0 h \\ \Pi_{\neq} h \end{bmatrix} = \begin{bmatrix} (\Pi_0 \cA[1]) \Pi_0 h  + \langle \Pi_{\neq} h ,\Pi_{\neq} \cA^*[1]\rangle \\
(\Pi_0 h) \Pi_{\neq} \cA[1] + \Pi_{\neq} \cA[\Pi_{\neq} h]  \end{bmatrix}   
 = \begin{bmatrix} \Pi_0 \cA[\Pi_0 h ]  + \langle  \cA[\Pi_{\neq} h], 1 \rangle \\
\Pi_{\neq} \cA[\Pi_0 h ] + \Pi_{\neq} \cA[\Pi_{\neq} h]  \end{bmatrix}
\end{align}
which gives the right-hand side of \cref{matrixaction}.
%\end{proof}

In our case, first observe that 
\begin{align}
\cL_{\nu,\eps}^*&=\nu \cD_{\eps}+\frac{\im }{\eps}\Pi_0\circ (U''(y)\mathrm{Id})+\im \eps \big(U(y)+\Pi_{\neq}\circ(-\de_{yy}+\eps^2)^{-1}\circ (U'' \mathrm{Id})\big),
\\
\Pi_{\neq}(\cL_{\nu,\eps}^*[1])&=\im \eps \big(U(y)-(\de_{yy}-\eps^2)^{-1}(\de_{yy}-\eps^2+\eps^2)U(y)\big)=-\im \eps^3(\de_{yy}-\eps^2)U(y).
\end{align}
Therefore, the matrix $\mathtt{L}_{\nu,\e} $  associated as in \cref{matrixrepresentation} with the operator $\cL_{\nu,\e}$ in \cref{cLnueps} is given by
% \begin{equation}\label{cLsharp}
% \mathtt{L}_{\nu,\e} = \begin{bmatrix}
% -\nu \e^2 & -\e^3 \big(  \im(\de_{yy} - \e^2)^{-1} U(y) \big)^\top \\
% -\frac{\im}{\e} U''(y) - \im \e U(y) &  \cL_{\nu,\e}^{\sharp}
% \end{bmatrix} \,,\quad \cL_{\nu,\e}^{\sharp} \coloneqq  \Pi_{\neq} \cL_{\nu,\e} \Pi_{\neq}\, .
% \end{equation}
\begin{align}\label{cLsharp}
\mathtt{L}_{\nu,\e} \stackrel{\cref{matrixrepresentation}}{=} &\begin{bmatrix} \Pi_0 \cL_{\nu,\e}[1] & (\Pi_{\neq} \cL_{\nu,\e}^*[1])^\top  \\
\Pi_{\neq} \cL_{\nu,\e}[1] & \Pi_{\neq} \cL_{\nu,\e} \Pi_{\neq} \end{bmatrix} \\
 \stackrel{\cref{cLnueps}}{=}  &\begin{bmatrix}
-\nu \e^2 & \big( -\im \eps^3(\de_{yy}-\eps^2)^{-1}U(y)\big)^\top \\
-\frac{\im}{\e} U''(y) - \im \e U(y) & \Pi_{\neq} \cL_{\nu,\e} \Pi_{\neq} 
\end{bmatrix} 
% \\
%  =\, \; &\begin{bmatrix}
% -\nu \e^2 & \im \e^3 \big((\de_{yy}-\eps^2)^{-1}U(y)\big)^\top\\
% %\big( U(y) - (\de_{yy} - \e^2)^{-1} (\de_{yy} - \e^2 + \e^2 )U(y) \big)^\top \\
% -\frac{\im}{\e} U''(y) - \im \e U(y) & \Pi_{\neq} \cL_{\nu,\e} \Pi_{\neq}
% \end{bmatrix}\,,
\end{align} 
In the following we will denote $\cL_{\nu,\e}^{\sharp} \coloneqq  \Pi_{\neq} \cL_{\nu,\e} \Pi_{\neq}$ for brevity.

We aim to conjugate the matrix $\mathtt{L}_{\nu,\e}$ in \cref{cLsharp} with a transformation of the form
\begin{equation}\label{transformationmatrix}
\mathtt{T} \coloneqq  \begin{bmatrix}
1 & X(y)^\top \\ Y(y) & \uno
\end{bmatrix}\,,
\end{equation}
where $X$ and $Y$ are $2\pi$-periodic, average-free functions. We summarize the properties of such a transformation in the following lemma. Here and in the sequel, for every $f \in H^s_0(\TT)$ and $g\in L^2_0(\TT)$, we write as $f(y) g(y)^\top $ the bounded rank-$1$ operator of order $0$ that sends $L^2(\TT)\ni h \mapsto \langle h, g \rangle f(y) \in H^s_0(\TT)$.
\begin{lemma}\label{invertibilita mathtt T}
 Let $s\geq 0$, $X\in L^2_0(\TT)$  and $Y\in H^s_0(\TT)$.  If $\langle Y , X \rangle  \neq 1$, then
 the matrix $\ttT$ is invertible.
%  bounded by
% \begin{equation}\label{boundtrasformationTabs}
%    \| \cT h \|_s \leq  \big(1 + \max\{\|X\|_{L^2}, \|Y\|_{s} \} \big) \|h\|_s\, .
% \end{equation}
% Moreover,  then
% $\cT$ is invertible
with inverse %associated  with
given by the matrix product
\begin{equation}\label{theinversetransformation}
\mathtt{T}^{-1} = \begin{bmatrix}
1 & -X(y)^\top \\ -Y(y) & \uno
\end{bmatrix} {\footnotesize \begin{bmatrix}
\dfrac{1}{1-\langle Y , X \rangle}  & 0^\top \\ 0 & \uno + \dfrac{Y(y) X(y)^\top }{1-\langle Y, X \rangle }
\end{bmatrix}}\, .
\end{equation}
\end{lemma}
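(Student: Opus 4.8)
The plan is to regard $\mathtt{T}$ as a $2\times 2$ block operator matrix on the direct sum $\CC\oplus H^s_0(\TT)$, whose lower-right block is the identity $\uno$ of $H^s_0(\TT)$, and to invert it by the usual Schur-complement bookkeeping. Before any computation I would record the mapping properties of the entries appearing in $\mathtt{T}$ and in the candidate inverse: the functional $X^\top$ is bounded on $L^2_0(\TT)$, hence on $H^s_0(\TT)$; the ``column'' $Y$ is the bounded operator $\CC\ni c\mapsto cY\in H^s_0(\TT)$; and $Y(y)X(y)^\top$ is a bounded rank-one operator of order $0$ on $H^s_0(\TT)$. This is precisely where the hypotheses $X\in L^2_0(\TT)$ and $Y\in H^s_0(\TT)$ enter, and it ensures that every block of the matrix on the right-hand side of \cref{theinversetransformation} is a bounded endomorphism of $H^s(\TT)$; together with the standing assumption that the scalar $1-\langle Y,X\rangle$ is nonzero, this makes that matrix an a priori element of ${\mathcal B}(H^s,H^s)$.

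Next I would isolate the one algebraic identity that makes everything collapse: by antilinearity of $f\mapsto f^\top$ one has $X^\top Y=\langle Y,X\rangle$ as a scalar operator on $\CC$, and hence $(X^\top Y)\,YX^\top=\langle Y,X\rangle\,YX^\top$ on $H^s_0(\TT)$. With this in hand the verification is a short direct computation. Writing $\delta\coloneqq 1-\langle Y,X\rangle\neq 0$, one first checks
\[
\mathtt{T}\begin{bmatrix}1 & -X^\top\\ -Y & \uno\end{bmatrix}=\begin{bmatrix}\delta & 0^\top\\ 0 & \uno-YX^\top\end{bmatrix},
\]
and then that right-multiplying by the diagonal matrix in \cref{theinversetransformation} yields $\uno$, because $(\uno-YX^\top)\bigl(\uno+\delta^{-1}YX^\top\bigr)=\uno$ as a consequence of $1-\delta-\langle Y,X\rangle=0$. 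The opposite product $\mathtt{T}^{-1}\mathtt{T}=\uno$ is checked the same way. As an alternative (and as a sanity check) one may instead exhibit the triangular factorization $\mathtt{T}=\bigl[\begin{smallmatrix}1 & X^\top\\ 0 & \uno\end{smallmatrix}\bigr]\bigl[\begin{smallmatrix}\delta & 0^\top\\ 0 & \uno\end{smallmatrix}\bigr]\bigl[\begin{smallmatrix}1 & 0^\top\\ Y & \uno\end{smallmatrix}\bigr]$, which is immediate from $X^\top Y=\langle Y,X\rangle$, invert each of the three (manifestly invertible) factors, and multiply them in reverse order.

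I do not expect a genuine obstacle: the content is elementary block-matrix linear algebra. The only points demanding a little care are the bookkeeping of the antilinearity of $(\cdot)^\top$ — so that the scalar $X^\top Y$ comes out as $\langle Y,X\rangle$ and not its complex conjugate — and the verification that $\mathtt{T}^{-1}$ is bounded on $H^s(\TT)$ and not merely on $L^2(\TT)$, both of which are dispatched by the mapping-property remarks above.
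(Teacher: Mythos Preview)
Your proposal is correct and follows essentially the same route as the paper: compute $\ttT\begin{bmatrix}1 & -X^\top\\ -Y & \uno\end{bmatrix}$ to obtain the block-diagonal matrix $\begin{bmatrix}\delta & 0\\ 0 & \uno-YX^\top\end{bmatrix}$, then invert that diagonal matrix. The only cosmetic difference is that the paper invokes the Sherman--Morrison formula (Lemma~\ref{lem:ShermanMorrisoninf}) to invert $\uno-YX^\top$, whereas you verify $(\uno-YX^\top)(\uno+\delta^{-1}YX^\top)=\uno$ by hand --- which is the same one-line computation --- and you add the triangular factorization as an alternative check.
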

\begin{proof}
Let us observe that
\begin{align}\label{matrixinthemiddle}
  {\footnotesize \begingroup 
\setlength\arraycolsep{0pt} \begin{bmatrix}
1 & X(y)^\top \\ Y(y) & \uno
\end{bmatrix}    \begin{bmatrix}
1 & -X(y)^\top \\ -Y(y) & \uno
\end{bmatrix}\endgroup = \begingroup 
\setlength\arraycolsep{-4pt}\begin{bmatrix} 1 - \langle Y,X \rangle & 0^\top \\ 0 & \uno - Y(y) X(y)^\top \end{bmatrix}\endgroup } = {\footnotesize \begingroup 
\setlength\arraycolsep{0pt}     \begin{bmatrix}
1 & -X(y)^\top \\ -Y(y) & \uno
\end{bmatrix} \begin{bmatrix}
1 & X(y)^\top \\ Y(y) & \uno
\end{bmatrix}\endgroup} \, .
\end{align}
As a consequence, if the ``diagonal'' matrix in the middle of \cref{matrixinthemiddle} is invertible, we have
\begin{equation}\label{partialTinverse}
    \ttT^{-1} = \begin{bmatrix}
1 & -X(y)^\top \\ -Y(y) & \uno
\end{bmatrix}  \begin{bmatrix} 1 - \langle Y,X \rangle & 0^\top \\ 0 & \uno - Y(y) X(y)^\top \end{bmatrix}^{-1}
\end{equation}
If $\langle Y, X \rangle \neq 0$, in view of Lemma \ref{lem:ShermanMorrisoninf}, we have
\begin{equation}\label{SMapply}
    \begin{bmatrix} 1 - \langle Y,X \rangle & 0^\top \\ 0 & \uno - Y(y) X(y)^\top \end{bmatrix}^{-1} =  {\footnotesize \begingroup 
\setlength\arraycolsep{-4pt}\begin{bmatrix}
\dfrac{1}{1-\langle Y , X \rangle}  & 0^\top \\ 0 & \uno + \dfrac{Y(y) X(y)^\top }{1-\langle Y, X \rangle }
\end{bmatrix}\endgroup}\, .
\end{equation}
Identities  \cref{SMapply,partialTinverse} give \cref{theinversetransformation}.
\end{proof}
\begin{remark}
The invertibility of the matrix $\ttT$ means that its associated operator, in the sense of  \cref{matrixaction}, is bounded and invertible, with bounded inverse which is associated to the above matrix $\ttT^{-1}$.
\end{remark}

Under a conjugation of the form \cref{transformationmatrix} the matrix $\mathtt{L}_{\nu,\e} $ in \cref{cLsharp} takes the form given in the following
\begin{lemma}\label{lem:abstarctconjugation}  Let $s\geq 0$ and $X,Y\in H^{s+2}_0(\TT)$ be such that $\langle Y , X \rangle  \neq 1$. Then the matrix $\mathtt{L}_{\nu,\e} $ in \cref{cLsharp} is conjugated by the invertible transformation $\mathtt{T}$ in \cref{transformationmatrix} into the following matrix
\begin{equation}\label{abstractconj}
\mathtt{L}_{\nu,\e}^{X,Y} \coloneqq  \mathtt{T} \mathtt{L}_{\nu,\e} \mathtt{T}^{-1} = \begin{bmatrix}
a_{\nu,\e}^{X,Y}  & \big(B_{\nu,\e}^{X}(y)\big)^\top \\[1mm] C_{\nu,\e}^{Y}(y) & \cD_{\nu,\e}^{X,Y}
\end{bmatrix} {\footnotesize \begin{bmatrix}
\dfrac{1}{1-\langle Y , X\rangle}  & 0^\top \\ 0 & \uno + \dfrac{Y(y) X(y)^\top }{1-\langle Y , X \rangle }
\end{bmatrix} }\, ,
\end{equation}
where
\begin{align}\label{aBCcD}
&a_{\nu,\e}^{X,Y}\coloneqq  -\nu\e^2 -\langle \frac{\im}{\e} U''+ \im \e U ,X\rangle + \e^3 \langle Y,\im(\de_{yy}-\e^2)^{-1}U \rangle - \langle \cL_{\nu,\e}^\sharp Y, X \rangle\, ,\\
&B_{\nu,\e}^{X}(y) \coloneqq  -\im \e^3  (\de_{yy}-\e^2)^{-1}U(y) + \big( (\cL_{\nu,\e}^\sharp)^* + \nu\e^2 \big) X(y) + \langle \frac{\im}{\e}U'' +\im \e U, X \rangle X(y)   \, , \\
&C_{\nu,\e}^{Y}(y) \coloneqq  - \frac{\im}{\e}U''(y) -\im \e U(y) -\big( \cL_{\nu,\e}^\sharp + \nu\e^2 \big) Y(y) +\e^3 \langle Y, \im(\de_{yy}-\e^2)^{-1}U \rangle Y(y)  \,,\\
&\cD_{\nu,\e}^{X,Y} \coloneqq  \cL_{\nu,\e}^\sharp +\nu\e^2 Y(y) X(y)^\top+ \big( \frac{\im}{\e} U''(y) + \im \e U(y) \big) X(y)^\top - \e^3 Y(y) \big( \im(\de_{yy}-\e^2)^{-1} U(y)\big)^\top \, .
\end{align}
\end{lemma}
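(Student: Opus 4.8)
The plan is to establish \cref{abstractconj} by a direct computation of the conjugation $\ttT\,\mathtt{L}_{\nu,\e}\,\ttT^{-1}$, organized so that all the work is concentrated in a single matrix product. \textbf{First}, I would use the factorization of $\ttT^{-1}$ from \cref{theinversetransformation}: writing $\ttT^{-1}=\ttT_0^{-1}\,\ttD$ with
\begin{equation*}
\ttT_0^{-1}\coloneqq\begin{bmatrix} 1 & -X(y)^\top \\ -Y(y) & \uno\end{bmatrix},\qquad
\ttD\coloneqq\begin{bmatrix}\dfrac{1}{1-\langle Y,X\rangle} & 0^\top \\[1mm] 0 & \uno+\dfrac{Y(y)X(y)^\top}{1-\langle Y,X\rangle}\end{bmatrix},
\end{equation*}
which is legitimate since $\langle Y,X\rangle\neq1$, one has $\ttT\,\mathtt{L}_{\nu,\e}\,\ttT^{-1}=\big(\ttT\,\mathtt{L}_{\nu,\e}\,\ttT_0^{-1}\big)\ttD$. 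Because $\ttD$ is precisely the ``diagonal'' factor appearing on the right of \cref{abstractconj}, it then suffices to prove that $\ttT\,\mathtt{L}_{\nu,\e}\,\ttT_0^{-1}$ coincides with the matrix whose four entries are $a_{\nu,\e}^{X,Y}$, $(B_{\nu,\e}^{X})^\top$, $C_{\nu,\e}^{Y}$, $\cD_{\nu,\e}^{X,Y}$ as listed in \cref{aBCcD}.

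\textbf{Second}, I would carry out this conjugation in two multiplications, using the explicit entries of $\mathtt{L}_{\nu,\e}$ from \cref{cLsharp}. It is convenient to abbreviate the blocks of $\mathtt{L}_{\nu,\e}$ as the scalar $\alpha\coloneqq-\nu\e^2$, the ``row'' datum $\beta\coloneqq-\im\e^3(\de_{yy}-\e^2)^{-1}U$ entering as the functional $\beta^\top$ (read off from the formula for $\Pi_{\neq}(\cL_{\nu,\e}^*[1])$ computed just before \cref{cLsharp}), the ``column'' datum $\gamma\coloneqq-\tfrac{\im}{\e}U''-\im\e U$, and the operator $\delta\coloneqq\cL_{\nu,\e}^\sharp=\Pi_{\neq}\cL_{\nu,\e}\Pi_{\neq}$ on $H^s_0(\TT)$. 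Multiplying $\ttT\,\mathtt{L}_{\nu,\e}$ first gives, block by block, $\alpha+\langle\gamma,X\rangle$, $\;\big(\beta+\delta^*X\big)^\top$ (using the identity $X^\top\!\circ\delta=(\delta^*X)^\top$), $\;\alpha Y+\gamma$, and $Y\beta^\top+\delta$; right-multiplying by $\ttT_0^{-1}$ and collecting terms then produces the $(1,1)$ entry $\alpha+\langle\gamma,X\rangle-\langle Y,\beta\rangle-\langle\delta Y,X\rangle$, which rearranges to $a_{\nu,\e}^{X,Y}$; the $(2,1)$ entry $\alpha Y+\gamma-\langle Y,\beta\rangle Y-\delta Y=C_{\nu,\e}^{Y}$; the $(2,2)$ entry $\delta-\alpha YX^\top-\gamma X^\top+Y\beta^\top=\cD_{\nu,\e}^{X,Y}$; and the $(1,2)$ entry, the functional attached to $\beta+\delta^*X$ plus a scalar multiple of $X$ inherited (with a conjugation) from the $(1,1)$-block scalar $\alpha+\langle\gamma,X\rangle$, which after simplifying the prefactor — here one uses that $U$ is real — is $(B_{\nu,\e}^{X})^\top$. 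Throughout, one notes that every rank-one piece $f(y)g(y)^\top$ is a bounded operator of order $0$ and that $\delta$ and $\delta^*$ map $H^{s+2}_0(\TT)$ into $H^s_0(\TT)$, so the hypothesis $X,Y\in H^{s+2}_0(\TT)$ makes all the terms meaningful and \cref{abstractconj} holds as an identity of bounded operators $H^{s+2}(\TT)\to H^s(\TT)$.

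\textbf{Main obstacle.} There is nothing conceptually hard here; the entire delicacy is bookkeeping, on two fronts. The first is the appearance of $(\cL_{\nu,\e}^\sharp)^*$ in $B_{\nu,\e}^{X}$ — as opposed to $\cL_{\nu,\e}^\sharp$ itself in $C_{\nu,\e}^{Y}$ and $\cD_{\nu,\e}^{X,Y}$ — which is forced because the $(1,2)$ block of $\mathtt{L}_{\nu,\e}$ is already encoded via $\cL_{\nu,\e}^*$ and composing on the left with $X^\top$ transposes whatever operator remains. The second is the antilinearity $(\zeta f)^\top=\overline{\zeta}\,f^\top$, which is the source of every complex conjugation that must be tracked to reconcile the coefficient of $X$ in $B_{\nu,\e}^{X}$ with the term $\langle\gamma,X\rangle$ appearing in $a_{\nu,\e}^{X,Y}$, and to keep the signs of the recurring expression $\e^3(\de_{yy}-\e^2)^{-1}U$ consistent across the four entries. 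Once these are handled carefully — together with the observation that the off-diagonal functions $B_{\nu,\e}^{X}$ and $C_{\nu,\e}^{Y}$ are exactly the quantities that will later be set to zero by the fixed-point choices of $X$ and $Y$ — the verification is complete.
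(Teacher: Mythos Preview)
Your proposal is correct and follows essentially the same route as the paper's own proof: factorize $\ttT^{-1}$ via \cref{theinversetransformation}, compute $\ttT\,\mathtt{L}_{\nu,\e}\,\ttT_0^{-1}$ block by block (the paper does this for a generic $2\times 2$ block matrix $\ttL$ and then substitutes the specific entries from \cref{cLsharp}, whereas you work directly with the abbreviations $\alpha,\beta,\gamma,\delta$), and then right-multiply by the diagonal factor $\ttD$. Your identification of the only genuine subtleties---the appearance of $(\cL_{\nu,\e}^\sharp)^*$ in the $(1,2)$ block via $X^\top\!\circ\delta=(\delta^*X)^\top$, and the antilinearity $(\zeta f)^\top=\overline{\zeta}\,f^\top$ that governs all the complex-conjugation bookkeeping---matches exactly what the paper relies on.
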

\begin{proof} Let us consider a generic matrix of the form 
$
    \ttL \coloneqq  {
    \begingroup 
\setlength\arraycolsep{0pt}\begin{bmatrix}
        a & B(y)^\top \\
        C(y) & \cD
    \end{bmatrix}\endgroup}
$, with $a\in \CC$, $B,C \in H^{s+2}_0(\TT)$ and $\cD : H^{s+2}_0 \to L^2_0$ being a bounded operator. In view of \cref{transformationmatrix} we have
\begin{align}
    \ttT \ttL = \begin{bmatrix}
        a + \langle C,X \rangle & B(y)^\top + X(y)^\top \circ \cD \\
        a Y(y) + C(y) & Y(y) B(y)^\top + \cD
    \end{bmatrix} 
    % = \begin{bmatrix}
    %     a + \langle C,X \rangle & B(y)^\top + \big( \cD^* X(y) \big)^\top \\
    %     a Y(y) + C(y) & Y(y) B(y)^\top + \cD
    % \end{bmatrix}\,,
\end{align}
where we can write the upper-right entry  as $B(y)^\top + \big( \cD^* X(y) \big)^\top$ since
\begin{align}
X(y)^\top  \cD f = \langle \cD f , X \rangle = \langle  f ,\cD^* X \rangle = \big( \cD^* X(y) \big)^\top f\,,\quad \forall f \in L^2_0(\TT)\, . 
\end{align}
By \cref{theinversetransformation} we have
\begin{align}
    \ttT \ttL \ttT^{-1} &={\footnotesize \begingroup 
\setlength\arraycolsep{0pt} \begin{bmatrix}
        a + \langle C,X \rangle & B(y)^\top + \big( \cD^* X(y) \big)^\top \\
        a Y(y) + C(y) & Y(y) B(y)^\top + \cD
    \end{bmatrix} \begin{bmatrix}
1 & -X(y)^\top \\ -Y(y) & \uno
\end{bmatrix}\endgroup \begingroup 
\setlength\arraycolsep{-5pt}\begin{bmatrix}
\dfrac{1}{1-\langle Y , X \rangle}  & 0^\top \\ 0 & \uno + \dfrac{Y(y) X(y)^\top }{1-\langle Y, X \rangle }
\end{bmatrix}\endgroup} \\ \label{LconjTaux}
&= \begingroup 
\setlength\arraycolsep{0pt} \begin{bmatrix}
    \widetilde{a} & \widetilde{B}(y)^\top \\ \widetilde{C}(y) & \widetilde{\cD}
\end{bmatrix}\endgroup {\footnotesize\begingroup 
\setlength\arraycolsep{-5pt}\begin{bmatrix}
\dfrac{1}{1-\langle Y , X \rangle}  & 0^\top \\ 0 & \uno + \dfrac{Y(y) X(y)^\top }{1-\langle Y, X \rangle }
\end{bmatrix}\endgroup} \, ,
\end{align}
where
\begin{align}\label{LconjTauxbis}
    & \begingroup 
\setlength\arraycolsep{0pt} \begin{bmatrix}
    \widetilde{a} & \widetilde{B}(y)^\top \\ \widetilde{C}(y) & \widetilde{\cD}
\end{bmatrix}\endgroup \coloneqq  \begingroup\setlength\arraycolsep{0pt} \begin{bmatrix}
        a + \langle C,X \rangle & B(y)^\top + \big( \cD^* X(y) \big)^\top \\
        a Y(y) + C(y) & Y(y) B(y)^\top + \cD
    \end{bmatrix} \begin{bmatrix}
1 & -X(y)^\top \\ -Y(y) & \uno
\end{bmatrix}\endgroup \\
= & \begingroup 
\setlength\arraycolsep{2pt} \begin{bmatrix}
    a + \langle C,X \rangle - \langle Y,B\rangle - \langle \cD Y, X\rangle & - a X(y)^\top - \langle C,X \rangle X(y)^\top + B(y)^\top + \big( \cD^* X(y) \big)^\top \\
    aY(y) + C(y) - \langle Y,B \rangle Y(y) - (\cD Y)(y) & - a Y(y) X(y)^\top - C(y) X(y)^\top +Y(y) B(y)^\top + \cD
\end{bmatrix} \endgroup\, .
\end{align}
Formula \cref{abstractconj}  descends from  \cref{LconjTaux} and the terms $a_{\nu,\e}^{X,Y}, B_{\nu,\e}^{X,Y}, C_{\nu,\e}^{X,Y}, \cD_{\nu,\e}^{X,Y} $ are given respectively by the entries  $ \widetilde{a}$, $ \widetilde{B}$, $ \widetilde{C}$ and $\widetilde{\cD}$ in \cref{LconjTauxbis} where we substituted, in view of \cref{cLsharp}, 
\begin{align} 
    a\to -\nu\e^2, \quad B(y)\to -\im \e^3 (\de_{yy} -\e^2)^{-1} U(y), \quad C(y)\to -\frac{\im}{\e} U''(y) - \im \e U(y), \quad \cD\to \cL_{\nu,\e}^\sharp.
\end{align}
% $ -\nu\e^2 $ for $ a $ ,  $ \im \e^3 (\de_{yy} -\e^2)^{-1} U(y)$ for $B(y) $ as well as  $ -\frac{\im}{\e} U''(y) - \im \e U(y)$ for $C(y) $ and  $ \cL_{\nu,\e}^\sharp$ for $\cD $.
This concludes the proof of the lemma.
\end{proof}
We are now about to construct the functions $X$ and  $Y$ that annihilate respectively the terms $B_{\nu,\e}^X$ and $C_{\nu,\e}^Y$ appearing in \cref{abstractconj}-\cref{aBCcD}. The key step of the construction consists in inverting the operator $\cA_{\nu,\e}: H^{s+2}_0(\TT) \to H^s_0(\TT) $ given by
\begin{equation}\label{cAnueps}
    \cA_{\nu,\e} \coloneqq  \cL_{\nu,\eps}^\sharp+\nu\eps^2 = \nu \de_{yy} - \im \e \cR_{\e}^\sharp \,, \quad\text{where}\ \ \cR_{\e}^\sharp \coloneqq  \Pi_{\neq} \cR_{\e} \Pi_{\neq}\, ,
\end{equation}
with $\cR_\e$ in \cref{cMcR}. 
Since $H^s$ is an algebra for $s>1/2$, the operator $\cR_{\e}^\sharp$ is bounded by, for every $h \in H_0^s(\TT)$,
\begin{equation}
    \| \cR_\e^\sharp h \|_s \lesssim \big( \| U\|_s \|h\|_s + \|U''\|_s \| (-\de_{yy} + \e^2)^{-1} h\|_s \big) \lesssim  \|h\|_s\, .  
\end{equation}
As a consequence, provided that \cref{invertibilityconditionstep1} holds with $\delta_0$ sufficiently small, the operator $\cA_{\nu,\e}$ in \cref{cAnueps} is invertible with inverse given by a Neumann series and bounded by
 \begin{equation}\label{boundinversecAnueps}
     \|\cA_{\nu,\e}^{-1}\|_{{\mathcal B}(H^s_0, H^{s + 2}_0) }   %\frac{1}{\nu- 2 C(s) \|U\|_{s+2} \, \e }
     \lesssim \frac 1\nu\, .
 \end{equation}
 We now observe that the adjoint operator of $\cA_{\nu,\e}$ in \cref{cAnueps} is a well-defined bounded linear operator $\cA_{\nu,\e}^*: H^{s+2}_0(\TT) \to H^s_0(\TT)$ given by 
\begin{equation}\label{cAstar}
    \cA_{\nu,\e}^* \coloneqq  \nu \de_{yy} + \im \e \Pi_{\neq}\big( U(y) + (-\de_{yy}+\e^2 )^{-1} \circ U''(y) \big) \Pi_{\neq}\, .
\end{equation}
Under the condition \cref{invertibilityconditionstep1}, the operator $\cA_{\nu,\e}^*$ is invertible with inverse fulfilling the estimate
 \begin{equation}\label{boundinversecAstarnueps}
     \|[\cA_{\nu,\e}^*]^{-1}\|_{{\mathcal B}(H^s_0, H^{s + 2}_0) } \lesssim \frac{1}{\nu}
     %\leq  \frac{1}{\nu- 2 C(s) \|U\|_{s+2} \, \e }
     \, .
 \end{equation}
%
% \begin{lemma}\label{lemmaofinvertibility}
% Let $\cB_{\e}:H^s_0(\TT) \to H^s_0(\TT) $ be an operator bounded by some constant $C$ independent of $\e$. If
% $
%     C\tfrac{\e}{\nu}  < 1
% $, then
%  the operator $\cA_{\nu,\e}$ in \cref{cAnueps} is invertible, with inverse given by the Neumann series
%  \begin{equation}
%      \cA_{\nu,\e}^{-1} = \frac1\nu \de_{yy}^{-1} \sum_{p=0}^\infty \big(\frac{\e}{\nu} \big)^p \big( \cB_{\e} \de_{yy}^{-1} \big)^p  \; : \; H^s_0(\TT) \to H^{s+2}_0(\TT)\, .
%  \end{equation}
%  Moreover, we can estimate the norm of $\cA_{\nu,\e}^{-1}$ as follows
%
%  \end{lemma}
In the regime given by \cref{invertibilityconditionstep1}, the aforementioned functions $X$ and $Y$ are found as fixed points of quadratic mappings. In such a setting we have the following abstract result.
\begin{lemma}\label{lem:quadraticfixedpoint}
Let $(\ttX,\|\cdot\|)$ be a Banach space and $Q: \ttX \times \ttX \to \ttX $ be a bilinear mapping satisfying
\begin{equation}\label{continuitybilinearform}
    \| Q(u,v)\| \leq c \|u\| \|v\|\,,\quad \forall\, u , v \in \ttX\, ,
\end{equation}
for some $c>0$. 
Let $u_0\in \ttX$ such that $\| u_0 \|< \dfrac{1}{4c} $. Then the  fixed-point problem
\begin{equation}\label{abstractfixedpoint}
    u = F(u)\coloneqq  u_0 + Q(u,u)
\end{equation}
possesses a solution $u \in \ttX$ satisfying
 \begin{equation}\label{expansionabstractfixedpoint}
       \| u - u_0 \| \leq 4 c \|u_0\|^2 \, .
    \end{equation}
\end{lemma}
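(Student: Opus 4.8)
The plan is to realise $u$ as the unique fixed point of $F$ on a small closed ball around $u_0$ via the Banach fixed-point theorem, choosing the radius so that the single smallness hypothesis $\|u_0\| < \tfrac{1}{4c}$ delivers at once the self-mapping and the contraction properties. Write $a \coloneqq \|u_0\|$; if $a = 0$ then $u_0 = 0$ and $u = 0$ trivially solves \cref{abstractfixedpoint}, so assume $a > 0$. Motivated by the target bound \cref{expansionabstractfixedpoint}, set $\rho \coloneqq 4 c a^2$ and let $\overline{B} \coloneqq \{ u \in \ttX : \|u - u_0\| \leq \rho\}$, which is a complete metric space as a closed subset of the Banach space $\ttX$.

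First I would check that $F(\overline{B}) \subseteq \overline{B}$. For $u \in \overline{B}$ one has $\|u\| \leq a + \rho = a(1 + 4ca)$, so by \cref{continuitybilinearform},
\begin{equation*}
\|F(u) - u_0\| = \|Q(u,u)\| \leq c \|u\|^2 \leq c a^2 (1 + 4 c a)^2 < 4 c a^2 = \rho\,,
\end{equation*}
where the strict inequality uses $1 + 4 c a < 2$, which is exactly the hypothesis $c a < \tfrac14$. Next I would verify that $F$ is a contraction on $\overline{B}$: bilinearity gives $F(u) - F(v) = Q(u,u) - Q(v,v) = Q(u - v, u) + Q(v, u - v)$, hence by \cref{continuitybilinearform},
\begin{equation*}
\|F(u) - F(v)\| \leq c \|u - v\|\big(\|u\| + \|v\|\big) \leq 2 c (a + \rho) \|u - v\| = 2 c a (1 + 4 c a) \|u - v\|\,,
\end{equation*}
and again $c a < \tfrac14$ forces the Lipschitz constant $2 c a(1 + 4ca) < 4 c a < 1$.

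By the Banach fixed-point theorem, $F$ has a (unique) fixed point $u \in \overline{B}$, which is a solution of \cref{abstractfixedpoint}; and membership $u \in \overline{B}$ is precisely the estimate $\|u - u_0\| \leq \rho = 4 c \|u_0\|^2$ asserted in \cref{expansionabstractfixedpoint}. I do not expect any genuine obstacle here, since this is a textbook contraction scheme; the one step that needs a moment's care is the calibration of the radius $\rho$: taking $\rho = 4c\|u_0\|^2$ is exactly what makes the single hypothesis $\|u_0\| < \tfrac{1}{4c}$ close both the self-mapping and the contraction estimates, and it conveniently already encodes the quantitative conclusion.
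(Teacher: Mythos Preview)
Your proof is correct; the paper omits the argument entirely, stating only that it is ``a standard application of Banach fixed point theorem,'' and your write-up is precisely such a standard application with the radius $\rho=4c\|u_0\|^2$ calibrated so that the single hypothesis $c\|u_0\|<\tfrac14$ yields both the self-mapping (via $(1+4ca)^2<4$) and the contraction (via $2ca(1+4ca)<4ca<1$).
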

The proof of this lemma is a standard application of Banach fixed point theorem and is thus omitted.
We are now in a position to prove the following
\begin{lemma}[``vertical'' homological equation]\label{lem:vheq}
Let $\varepsilon \nu^{- 1} \leq \delta_0$  for some small $\delta_0 \coloneqq  \delta_0(s, \|U\|_s)$. 
% Suppose hypothesis \cref{hyp:Ubis} holds and moreover \maria{$\varepsilon \nu^{- 1} \leq \delta_0$  for some small $\delta_0 \coloneqq  \delta_0(s, \|U\|_s)$}.
% \begin{equation}\label{conditionepsovernu}
% \frac{\e}{\nu} \leq \frac1{ 2\sqrt{2}   \|U\|_{\frac{s}2} +  2 C(s) \|U\|_{s+2}}\, .
% \end{equation}
Then there exists $Y_{\nu,\e} \in H^{s+2}_0(\TT)$ such that the term $C_{\nu,\e}^Y$ in \cref{aBCcD} vanishes at $Y= 
Y_{\nu,\e}$. Moreover, with $\cA_{\nu,\e}$ in \cref{cAnueps}, the function $Y_{\nu,\e}$ satisfies
\begin{equation}
\label{actualY}
\big\| Y_{\nu,\e} + \cA_{\nu,\e}^{-1}\Big(\frac{\im}{\e}U''(y) +\im \e U(y) \Big)  \big\|_{s+2}  \lesssim \frac{\eps}{\nu^3}
% \maria{\lesssim \frac{\e}{\nu^3}}\leq \frac{16 \e \|\de_{yy}^{-1} U\|_{L^2} \|U\|_{s+2}^2}{(\nu- 2 C(s) \|U\|_{s+2}\, \e )^3}\, .
\end{equation}
\end{lemma}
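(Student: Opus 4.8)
The plan is to turn the equation $C_{\nu,\e}^{Y}=0$, with $C_{\nu,\e}^{Y}$ as in \cref{aBCcD}, into a quadratic fixed-point problem to which Lemma~\ref{lem:quadraticfixedpoint} applies, taking $\ttX=H^{s+2}_0(\TT)$. Recalling $\cA_{\nu,\e}=\cL_{\nu,\e}^\sharp+\nu\e^2$ from \cref{cAnueps}, the equation $C_{\nu,\e}^{Y}=0$ is equivalent to
\[
\cA_{\nu,\e}Y \;=\; -\Big(\tfrac{\im}{\e}U''+\im\e U\Big) \;+\; \e^3\big\langle Y,\im(\de_{yy}-\e^2)^{-1}U\big\rangle\, Y\, .
\]
The inhomogeneity $\tfrac{\im}{\e}U''+\im\e U$ lies in $H^s_0(\TT)$ since $U$, hence $U''$, has zero average; and, under \cref{invertibilityconditionstep1} with $\delta_0$ small, $\cA_{\nu,\e}$ is invertible with $\|\cA_{\nu,\e}^{-1}\|_{\cB(H^s_0,H^{s+2}_0)}\lesssim\nu^{-1}$ by \cref{boundinversecAnueps}. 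Applying $\cA_{\nu,\e}^{-1}$ gives the equivalent equation $Y=Y_0+Q(Y,Y)$ in $\ttX$, with
\[
Y_0 \coloneqq -\cA_{\nu,\e}^{-1}\Big(\tfrac{\im}{\e}U''+\im\e U\Big)\in H^{s+2}_0(\TT)\, ,\qquad Q(u,v)\coloneqq \e^3\big\langle v,\im(\de_{yy}-\e^2)^{-1}U\big\rangle\,\cA_{\nu,\e}^{-1}u\, ,
\]
where $Q$ is bilinear on $\ttX$ and takes values in $H^{s+2}_0(\TT)$.

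Next I would check the hypotheses of Lemma~\ref{lem:quadraticfixedpoint}. From \cref{boundinversecAnueps} and $\|U''\|_s\le\|U\|_{s+2}$,
\[
\|Y_0\|_{s+2}\lesssim \tfrac1\nu\big(\tfrac1\e\|U''\|_s+\e\|U\|_s\big)\lesssim \frac{1}{\nu\e}\, ,
\]
while, since $-\de_{yy}+\e^2\ge 1$ on $H^s_0(\TT)$ so that $\|(\de_{yy}-\e^2)^{-1}U\|_{L^2}\le\|U\|_{L^2}$, Cauchy--Schwarz together with \cref{boundinversecAnueps} yields
\[
\|Q(u,v)\|_{s+2}\le \e^3\,\|v\|_{L^2}\,\|(\de_{yy}-\e^2)^{-1}U\|_{L^2}\,\|\cA_{\nu,\e}^{-1}u\|_{s+2}\lesssim \frac{\e^3}{\nu}\,\|u\|_{s+2}\|v\|_{s+2}\, .
\]
Hence \cref{continuitybilinearform} holds with $c\asymp \e^3\nu^{-1}$, and the smallness requirement $\|Y_0\|<(4c)^{-1}$ of Lemma~\ref{lem:quadraticfixedpoint} reduces to $(\nu\e)^{-1}\lesssim \nu\e^{-3}$, i.e.\ $\e^2\lesssim\nu^2$, which is guaranteed by $\e\nu^{-1}\le\delta_0$ once $\delta_0=\delta_0(s,\|U\|_{s+2})$ is small enough.

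Therefore Lemma~\ref{lem:quadraticfixedpoint} produces $Y_{\nu,\e}\in H^{s+2}_0(\TT)$ solving $Y=Y_0+Q(Y,Y)$, equivalently $C_{\nu,\e}^{Y_{\nu,\e}}=0$, with
\[
\|Y_{\nu,\e}-Y_0\|_{s+2}\le 4c\,\|Y_0\|_{s+2}^2\lesssim \frac{\e^3}{\nu}\cdot\frac{1}{\nu^2\e^2}=\frac{\e}{\nu^3}\, ,
\]
and since $Y_0=-\cA_{\nu,\e}^{-1}\big(\tfrac{\im}{\e}U''+\im\e U\big)$ this is exactly \cref{actualY}. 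The one genuinely delicate point is the bookkeeping of the singular powers of $\e$: the inhomogeneity $Y_0$ is as large as $\e^{-1}$, but the quadratic term $Q$ carries a compensating factor $\e^{3}$, so the first correction $Q(Y_0,Y_0)$ is of size $\e^{3}\cdot\e^{-2}=\e$ (up to the $\nu^{-3}$ weight) and the contraction closes precisely in the long-wave regime $\e\ll\nu$ --- the same non-perturbative feature that forces the transformation $\mathtt{T}$ to be far from the identity.
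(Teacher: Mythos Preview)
Your proof is correct and follows essentially the same approach as the paper: you recast $C_{\nu,\e}^Y=0$ as the fixed-point equation $Y=Y_0+Q(Y,Y)$ in $H^{s+2}_0(\TT)$ via the invertibility of $\cA_{\nu,\e}$, and then apply Lemma~\ref{lem:quadraticfixedpoint} with the same choices of $Y_0$ and $Q$ (up to the harmless swap of arguments in $Q$), obtaining the identical bounds $\|Y_0\|_{s+2}\lesssim(\nu\e)^{-1}$, $c\asymp\e^3\nu^{-1}$, and hence \cref{actualY}.
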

\begin{proof}  In view of \cref{aBCcD}, we aim to find a vector $Y\in H^{s+2}_0(\TT)$ that solves the following equation
   \begin{equation}\label{homoeq2}
     \frac{\im}{\e}U''(y) +\im \e U(y) + \cA_{\nu,\e} Y(y) -\e^3 \langle Y, \im(\de_{yy}-\e^2)^{-1}U \rangle Y(y)   = 0\, ,
   \end{equation}
   with $\cA_{\nu,\e}$ in \cref{cAnueps}. If \cref{invertibilityconditionstep1} holds, 
we can recast equation \cref{homoeq2} into the following fixed-point problem
\begin{equation}\label{homofixedpoint2}
    Y(y) = -  \cA_{\nu,\e}^{-1}\Big(\frac{\im}{\e}U''(y) +\im \e U(y) \Big)  + \e^3 \langle Y, \im(\de_{yy}-\e^2)^{-1}U \rangle  \cA_{\nu,\e}^{-1} Y(y) \, .
\end{equation}
We now aim to apply Lemma \ref{lem:quadraticfixedpoint} with $\ttX \coloneqq  H^{s+2}_0(\TT)$ and
\begin{equation}\label{choiceQu0forY}
    Q(u,v)(y)\coloneqq   \e^3 \langle u, \im(\de_{yy}-\e^2)^{-1}U \rangle  \cA_{\nu,\e}^{-1} v(y)\,,\quad u_0\coloneqq  -  \cA_{\nu,\e}^{-1}\Big(\frac{\im}{\e}U''(y) +\im \e U(y) \Big) \, .
\end{equation}
By applying first the Cauchy-Schwarz inequality and then \cref{boundinversecAnueps}, note that
\begin{align}
    \| Q(u,v)\|_{s+2} &\leq \e^3 \| (\de_{yy}-\e^2)^{-1}U \|_{L^2} \|u\|_{L^2} \|\cA_{\nu,\e}^{-1} v \|_{s+2} 
    % {\leq} \frac{ \e^3 \| \de_{yy}^{-1}U \|_{L^2}}{\nu- 2 C(s) \|U\|_{s+2}\, \e } 
    \lesssim \frac{\eps^3}{\nu}\|u\|_{s+2} \| v \|_{s+2}\, .
\end{align}
For the function $u_0$, in view of \cref{boundinversecAnueps} we deduce that
\begin{equation}\label{estimateonu0forY}
    \|u_0\|_{s+2} \lesssim \frac{1}{\nu\e}.
\end{equation}
To apply Lemma \ref{lem:quadraticfixedpoint}, we just need to check that $\nu\eps\gg \eps^3\nu^{-1}$, which is certainly satisfied provided that $\delta_0$ in \cref{invertibilityconditionstep1} is sufficiently small.
% We now claim that, under the hypothesis \cref{conditionepsovernu}, $u_0$ in \cref{choiceQu0forY} satisfies the condition $\|u_0\| \leq \tfrac{1}{4c} $. By \cref{choiceQu0forY} and \cref{boundinversecAnueps} we have
% \begin{equation}\label{estimateonu0forY}
%     \|u_0\|_{s+2} \leq \frac{1}{\e}\frac{ 2 \|U\|_{s+2}}{\nu- 2 C(s) \|U\|_{s+2}\, \e }\, .
% \end{equation}
% We then impose that
% \begin{align}
%     &\frac{1}{\e}\frac{ 2 \|U\|_{s+2}}{\nu- 2 C(s) \|U\|_{s+2}\, \e } \leq \frac{1}{4c} \stackrel{\cref{choiceofc}}{=} \frac{\nu- 2 C(s) \|U\|_{s+2}\, \e }{ 4 \e^3 \| \de_{yy}^{-1}U \|_{L^2}}\, ,
% \end{align}
% namely
% \begin{align}\label{auxinequalityfirsthomo}
%     &\e^2 \leq \frac{(\nu- 2 C(s) \|U\|_{s+2}\, \e)^2 }{ 8   \|U\|_{s+2} \| \de_{yy}^{-1}U \|_{L^2}} \leq  \frac{(\nu- 2 C(s) \|U\|_{s+2}\, \e)^2 }{ 8   \|U\|_{\frac{s}2}^2 } \,,
% \end{align}
% where the latter inequality descends from the following one
% \begin{equation}
%     \|U\|_{\frac{s}2}^2 \stackrel{\cref{newnorm}}{=} \langle D^{\frac{s}2} U, D^{\frac{s}2} U \rangle = \langle D^{-2} U, D^{s+2} U\rangle \leq \| \de_{yy}^{-1}U\|_{L^2} \| U \|_{s+2}\, .  
% \end{equation}
% By taking the square root of both sides of inequality \cref{auxinequalityfirsthomo} we obtain the condition
% $
%     { 2\sqrt{2}   \|U\|_{\frac{s}2} }\e \leq {\nu- 2 C(s) \|U\|_{s+2}\, \e }
% $ which is equivalent to the hypothesis \cref{conditionepsovernu}. This proves our claim. \\
Thus all the assumptions of Lemma \ref{lem:quadraticfixedpoint} are verified and we conclude that the mapping $\Psi$ in \cref{homofixedpoint2} has a fixed point $Y_{\nu,\e} \in H_0^{s+2}(\TT)$ that satisfies \cref{actualY}.
\end{proof}

\begin{lemma}[``horizontal'' homological equation] \label{lem:hheq} Let $\varepsilon \nu^{- 1} \leq \delta_0$  for some small $\delta_0 \coloneqq  \delta_0(s, \|U\|_s)$.
Then there exists $X_{\nu,\e} \in H^{s+2}_0(\TT)$, satisfying
\begin{equation}
\label{actualX}
\big\| X_{\nu,\e}-\im \e^3  [\cA_{\nu,\e}^*]^{-1} (\de_{yy}-\e^2)^{-1}U(y) \big\|_{s+2} \lesssim \frac{\eps^5}{\nu^3}
%\frac{8\e^5 \|U\|_2 \|U\|_{s-2}^2}{(\nu- 2 C(s) \|U\|_{s+2}\, \e)^3} \, ,
\end{equation}
with $\cA_{\nu,\e}^*$ in \cref{cAstar},
such that the term $B_{\nu,\e}^X$ in \cref{aBCcD} vanishes at $X= 
X_{\nu,\e}$. 
\end{lemma}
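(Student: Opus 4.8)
Solving $B_{\nu,\e}^{X}=0$ with $B_{\nu,\e}^{X}$ as in \cref{aBCcD}, the plan is to mirror the proof of Lemma \ref{lem:vheq}. Since $(\cL_{\nu,\e}^\sharp)^*+\nu\e^2=\cA_{\nu,\e}^*$ (see \cref{cAstar}), the equation $B_{\nu,\e}^{X}(y)=0$ reads
\begin{equation*}
  \cA_{\nu,\e}^*X(y)=\im\e^3(\de_{yy}-\e^2)^{-1}U(y)-\big\langle\tfrac{\im}{\e}U''+\im\e U,\,X\big\rangle\,X(y)\,.
\end{equation*}
Under \cref{invertibilityconditionstep1}, the operator $\cA_{\nu,\e}^*$ is invertible with $\|[\cA_{\nu,\e}^*]^{-1}\|_{\cB(H^s_0,H^{s+2}_0)}\lesssim\nu^{-1}$ by \cref{boundinversecAstarnueps}, so I would recast this as the fixed-point problem $X=F(X)\coloneqq u_0+Q(X,X)$ on $\ttX\coloneqq H^{s+2}_0(\TT)$, with
\begin{equation*}
  u_0\coloneqq\im\e^3[\cA_{\nu,\e}^*]^{-1}(\de_{yy}-\e^2)^{-1}U(y)\,,\qquad Q(u,v)(y)\coloneqq-\big\langle\tfrac{\im}{\e}U''+\im\e U,\,u\big\rangle\,[\cA_{\nu,\e}^*]^{-1}v(y)\,,
\end{equation*}
to which Lemma \ref{lem:quadraticfixedpoint} will be applied.

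The next step is to verify the hypotheses of Lemma \ref{lem:quadraticfixedpoint}. For the bilinear term, Cauchy--Schwarz gives $|\langle\tfrac{\im}{\e}U''+\im\e U,u\rangle|\lesssim\e^{-1}\|U\|_{s+2}\|u\|_{L^2}$ (using $\e\le1$), and combining with $\|[\cA_{\nu,\e}^*]^{-1}v\|_{s+2}\lesssim\nu^{-1}\|v\|_{s}$ yields $\|Q(u,v)\|_{s+2}\lesssim(\e\nu)^{-1}\|u\|_{s+2}\|v\|_{s+2}$, so \cref{continuitybilinearform} holds with $c\sim(\e\nu)^{-1}$. For the constant term, since $(\de_{yy}-\e^2)^{-1}$ has norm $\le1$ on $H^s_0(\TT)$ (its eigenvalues are $-(j^2+\e^2)$ with $|j|\ge1$), one gets $\|u_0\|_{s+2}\lesssim\e^3\nu^{-1}\|U\|_{s+2}$. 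Hence $c\,\|u_0\|\lesssim\e^2\nu^{-2}\|U\|_{s+2}^2$, which is $<\tfrac14$ once $\delta_0$ in \cref{invertibilityconditionstep1} is small enough; thus Lemma \ref{lem:quadraticfixedpoint} produces a fixed point $X_{\nu,\e}\in H^{s+2}_0(\TT)$ with $\|X_{\nu,\e}-u_0\|_{s+2}\le4c\|u_0\|_{s+2}^2\lesssim\e^5\nu^{-3}$, which is precisely \cref{actualX}.

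By construction $X_{\nu,\e}$ solves $B_{\nu,\e}^{X_{\nu,\e}}=0$, and it inherits the regularity $H^{s+2}_0(\TT)$ from the mapping properties of $[\cA_{\nu,\e}^*]^{-1}$. There is no genuine obstacle here: the argument is the mirror image of Lemma \ref{lem:vheq}, and the only point requiring care is the bookkeeping of the singular powers of $\e$ --- the bilinear constant $c$ blows up like $(\e\nu)^{-1}$, but this is compensated by the extra factor $\e^3$ in $u_0$, so the smallness condition of Lemma \ref{lem:quadraticfixedpoint} closes exactly in the regime \cref{invertibilityconditionstep1}. Finally, it is worth noting for later use in Lemma \ref{lem:abstarctconjugation} that with the choices $X=X_{\nu,\e}$, $Y=Y_{\nu,\e}$ one has, from $X_{\nu,\e}=\cO(\e^3/\nu)$ and $Y_{\nu,\e}=\cO(1/(\nu\e))$ (the latter from \cref{actualY}), the bound $|\langle Y_{\nu,\e},X_{\nu,\e}\rangle|\lesssim\e^2/\nu^2\ll1$, so in particular $\langle Y_{\nu,\e},X_{\nu,\e}\rangle\neq1$ and the conjugation in Lemma \ref{lem:abstarctconjugation} is legitimate.
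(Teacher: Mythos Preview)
Your proof is correct and follows essentially the same route as the paper: you recast $B_{\nu,\e}^X=0$ as the fixed-point problem $X=u_0+Q(X,X)$ on $H^{s+2}_0(\TT)$ with the same $u_0$ and $Q$, verify $c\sim(\e\nu)^{-1}$ and $\|u_0\|_{s+2}\lesssim\e^3\nu^{-1}$, and apply Lemma~\ref{lem:quadraticfixedpoint} to obtain \cref{actualX}. The closing remark on $\langle Y_{\nu,\e},X_{\nu,\e}\rangle$ is also present in the paper (see \cref{YperX}) as a separate observation.
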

\begin{proof}  In view of \cref{aBCcD}, we aim to find a vector $X\in H^{s+2}_0(\TT)$ that solves the following equation
   \begin{equation}\label{homoeq1}
    -\im \e^3  (\de_{yy}-\e^2)^{-1}U(y) + \cA_{\nu,\e}^* X(y) + \langle \frac{\im}{\e}U'' +\im \e U, X \rangle X(y)  = 0
   \end{equation}
   with $\cA_{\nu,\e}^*$ in \cref{cAstar}. If \cref{boundinversecAstarnueps} holds, 
we can recast equation \cref{homoeq1} into the following fixed-point problem
\begin{equation}\label{homofixedpoint1}
    X(y) =   \im \e^3  [\cA_{\nu,\e}^*]^{-1} (\de_{yy}-\e^2)^{-1}U(y) - \langle \frac{\im}{\e}U'' +\im \e U, X \rangle [\cA_{\nu,\e}^*]^{-1}X(y)  =: \Phi(X)(y)\, .
\end{equation}
We now aim to apply Lemma \ref{lem:quadraticfixedpoint} with $\ttX \coloneqq  H^{s+2}_0(\TT)$ and
\begin{equation}\label{choiceQu0forX}
    Q(u,v)(y)\coloneqq  - \langle \frac{\im}{\e}U'' +\im \e U, u \rangle [\cA_{\nu,\e}^*]^{-1}v(y)\,,\quad u_0\coloneqq  \im \e^3  [\cA_{\nu,\e}^*]^{-1} (\de_{yy}-\e^2)^{-1}U(y) \, .
\end{equation}
% We take the constant in \cref{continuitybilinearform} as 
% \begin{equation}
%     \label{choiceofcbis}
%     c\coloneqq \frac{1}{\e} \frac{2\|U\|_2}{ \nu- 2 C(s) \|U\|_{s+2}\, \e } \,,
% \end{equation}
By applying first the Cauchy-Schwarz inequality and then \cref{boundinversecAstarnueps}, we get
\begin{align}
    \| Q(u,v)\|_{s+2} &\leq \| \frac{\im}{\e}U'' +\im \e U\|_{L^2} \|u \|_{L^2} \|[\cA_{\nu,\e}^*]^{-1}v \|_{s+2} \lesssim \frac{1}{\nu \eps}\|u\|_{s+2} \| v \|_{s+2}\\
    %{\leq}   \frac{2\|U\|_2}{\e \big(\nu- 2 C(s) \|U\|_{s+2}\, \e \big)} \|u\|_{s+2} \| v \|_{s+2}\, 
    \|u_0\|_{s+2}&\lesssim \frac{\eps^3}{\nu}.
\end{align}
% We now claim that, under the hypothesis \cref{conditionepsovernu}, $u_0$ in \cref{choiceQu0forX} satisfies the condition $\|u_0\| \leq \tfrac{1}{4c} $. By \cref{choiceQu0forX} and \cref{boundinversecAstarnueps} we have
% \begin{equation}\label{estimateonu0forX}
%     \|u_0\|_{s+2} \leq \frac{ \e^3 \|U\|_{s-2}}{\nu- 2 C(s) \|U\|_{s+2}\, \e }\, .
% \end{equation}
% We then impose that
% \begin{align}
%     \frac{ \e^3 \|U\|_{s-2}}{\nu- 2 C(s) \|U\|_{s+2}\, \e } \leq \frac{1}{4c} \stackrel{\cref{choiceofcbis}}{=}  \frac{\e \big( \nu- 2 C(s) \|U\|_{s+2}\, \e \big)}{8\|U\|_2}\, ,
% \end{align}
% namely
% \begin{align}\label{auxinequalitysecondhomo}
%     &\e^2 \leq \frac{(\nu- 2 C(s) \|U\|_{s+2}\, \e)^2 }{ 8   \|U\|_{2} \| U \|_{s-2}} \leq  \frac{(\nu- 2 C(s) \|U\|_{s+2}\, \e)^2 }{ 8   \|U\|_{\frac{s}2}^2 } \,,
% \end{align}
% where the latter inequality descends from the following one
% \begin{equation}
%     \|U\|_{\frac{s}2}^2 \stackrel{\cref{newnorm}}{=} \langle D^{\frac{s}2} U, D^{\frac{s}2} U \rangle = \langle D^{2} U, D^{s-2} U\rangle \leq \| U\|_{2} \| U \|_{s-2}\, .  
% \end{equation}
% We point out that the last inequality obtained in \cref{auxinequalitysecondhomo} is the same obtained in \cref{auxinequalityfirsthomo}. Consequently,
% our claim holds under the same condition \cref{conditionepsovernu}. 
All the hypotheses of Lemma \ref{lem:quadraticfixedpoint} are verified and we conclude that the mapping $\Phi$ in \cref{homofixedpoint1} has a fixed point $X_{\nu,\e} \in H_0^{s+2}(\TT)$ that
satisfies \cref{actualX}.
\end{proof}
Let us observe that, under condition \cref{invertibilityconditionstep1} with $\delta_0$ sufficiently small, the inner product between $Y_{\nu,\e}$ in Lemma \ref{lem:vheq} and   $X_{\nu,\e}$ in Lemma  \ref{lem:hheq}  is way smaller than $1$. Indeed,
\begin{equation}\label{YperX}
    \big| \langle Y_{\nu,\e},X_{\nu,\e} \rangle\big| \stackrel{\cref{actualY},\, \cref{actualX}}{\lesssim} \frac{\e^2}{\nu^2} \stackrel{\cref{invertibilityconditionstep1}}{<} \frac12  \, .
\end{equation}
We are now in a position to conjugate the matrix $\ttL_{\nu,\e}$ in \cref{cLsharp} into the decoupled matrix 
\begin{equation}\label{matrixL1}
    \ttL_{\nu,\e}^{(1)}\coloneqq  \ttT_{\nu,\e} \ttL_{\nu,\e} \ttT_{\nu,\e}^{-1}  = \ttL {\scriptsize \begingroup 
\setlength\arraycolsep{0pt} \begin{matrix} X_{\nu,\e} &, & Y_{\nu,\e} \\ \nu, \e & & \end{matrix}\endgroup}\, ,
\end{equation}
where $\ttT_{\nu,\e}$ is the matrix  $\ttT$ in  \cref{transformationmatrix} with $X\to X_{\nu,\e}$ and $Y\to Y_{\nu,\e}$, whereas $\ttL_{\nu,\e}^{X,Y}$ is the matrix in \cref{abstractconj}.
We describe the entries of the new matrix $\ttL_{\nu,\e}^{(1)}$ in 
the following
\begin{lemma}\label{primo coniugio normal-form}
The matrix $\ttL_{\nu,\e}^{(1)}$ in \cref{matrixL1} is given by
\begin{equation}
    \ttL_{\nu,\e}^{(1)} = \begin{bmatrix}
        \lambda_{\nu,\e}^{(0)} & 0^\top \\
        0 & \cL_{\nu,\e}^{(1)}
    \end{bmatrix} \,,
\end{equation}
with $\lambda_{\nu,\eps}^{(0)}$ as in \cref{exp:NFfinal}
and  $\cL_{\nu,\e}^{(1)} : H^{s+2}_0(\TT) \to H^s_0(\TT)$ is given by
\begin{align}\label{stableoperator}
\cL_{\nu,\e}^{(1)} \coloneqq   \Big( {\mathcal L}_{\nu, \varepsilon}^\sharp +  A(y) X_{\nu,\e}(y)^\top +  Y_{\nu,\e}(y) B(y)^\top\Big) \circ \Big( {\rm Id} + \frac{1}{1 - \langle X_{\nu,\e}, Y_{\nu,\e} \rangle} Y_{\nu,\e}(y) X_{\nu,\e}(y)^\top \Big)\, ,
\end{align}
where %$ X = X_{\nu,\e}$, $Y=Y_{\nu,\e}$ and
\begin{align}
\label{def:Anor}
& A(y) \coloneqq  \nu \varepsilon^2 Y_{\nu,\e}(y) +\frac{\im}{\eps} U''(y)  +\im\varepsilon U(y)\,, \\
\label{def:Bnor}& B(y) \coloneqq -\im\varepsilon^3 (\partial_{yy} - \varepsilon^2)^{- 1} U(y)\, .
\end{align}
\end{lemma}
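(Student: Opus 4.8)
The strategy is to specialize the abstract conjugation \cref{abstractconj} of Lemma~\ref{lem:abstarctconjugation} to $X=X_{\nu,\e}$ and $Y=Y_{\nu,\e}$, the functions produced in Lemmas~\ref{lem:hheq} and~\ref{lem:vheq}. By construction these annihilate the entries $B_{\nu,\e}^{X_{\nu,\e}}$ and $C_{\nu,\e}^{Y_{\nu,\e}}$ of \cref{aBCcD}, and by \cref{YperX} one has $\langle Y_{\nu,\e},X_{\nu,\e}\rangle\neq 1$, so $\ttT_{\nu,\e}$ is invertible (Lemma~\ref{invertibilita mathtt T}) and \cref{abstractconj} applies. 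The off-diagonal blocks of its first matrix factor then vanish while the second factor is block diagonal, so $\ttL_{\nu,\e}^{(1)}$ is block diagonal; its $(1,1)$ entry is $\lambda_{\nu,\e}^{(0)}\coloneqq a_{\nu,\e}^{X_{\nu,\e},Y_{\nu,\e}}/(1-\langle Y_{\nu,\e},X_{\nu,\e}\rangle)$, and for the other block one regroups the rank-one terms of $\cD_{\nu,\e}^{X,Y}$ in \cref{aBCcD} — the two contributions with $X_{\nu,\e}^\top$ on the right combine into $A(y)X_{\nu,\e}(y)^\top$ with $A$ as in \cref{def:Anor}, and, using the antilinearity $(\zeta f)^\top=\overline\zeta f^\top$ to absorb $\im$ and the sign, the last term becomes $Y_{\nu,\e}(y)B(y)^\top$ with $B$ as in \cref{def:Bnor} — and then composes with the second factor of \cref{abstractconj} to reach $\cL_{\nu,\e}^{(1)}$ in the form \cref{stableoperator}; this part is pure bookkeeping once the homological equations are solved.

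The substantive step is the expansion of $\lambda_{\nu,\e}^{(0)}$. Since $|\langle Y_{\nu,\e},X_{\nu,\e}\rangle|\lesssim\e^2/\nu^2$ by \cref{YperX}, it suffices to expand $a_{\nu,\e}^{X_{\nu,\e},Y_{\nu,\e}}$ in \cref{aBCcD} up to a relative error $\cO(\e/\nu+\e^2)$. Its four summands are $-\nu\e^2$ (the sign-definite leading term), $-\langle\tfrac{\im}{\e}U''+\im\e U,X_{\nu,\e}\rangle$, $\e^3\langle Y_{\nu,\e},\im(\de_{yy}-\e^2)^{-1}U\rangle$ and $-\langle\cL_{\nu,\e}^\sharp Y_{\nu,\e},X_{\nu,\e}\rangle$. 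I would insert the leading expressions from \cref{actualX,actualY} together with the Neumann expansions $\cA_{\nu,\e}^{-1}=\tfrac1\nu\de_{yy}^{-1}+\cO(\e/\nu^2)$ and $[\cA_{\nu,\e}^*]^{-1}=\tfrac1\nu\de_{yy}^{-1}+\cO(\e/\nu^2)$ in $\cB(H^s_0,H^{s+2}_0)$ (from \cref{cAnueps,cAstar,boundinversecAnueps,boundinversecAstarnueps}) and $(\de_{yy}-\e^2)^{-1}=\de_{yy}^{-1}+\cO(\e^2)$; to leading order this gives $X_{\nu,\e}=\tfrac{\im\e^3}{\nu}\de_{yy}^{-2}U$ and $Y_{\nu,\e}=-\tfrac{\im}{\nu\e}U$. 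Integrating by parts reduces every inner product to $\langle U'',\de_{yy}^{-2}U\rangle=\langle U,\de_{yy}^{-1}U\rangle=-\|\de_y^{-1}U\|_{L^2}^2$, and a short computation keeping track of the $\im$-factors shows that the three nontrivial summands contribute $+\tfrac{\e^2}{\nu}\|\de_y^{-1}U\|_{L^2}^2$, $+\tfrac{\e^2}{\nu}\|\de_y^{-1}U\|_{L^2}^2$ and $-\tfrac{\e^2}{\nu}\|\de_y^{-1}U\|_{L^2}^2$ respectively, i.e.\ a net $+\tfrac{\e^2}{\nu}\|\de_y^{-1}U\|_{L^2}^2$. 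Collecting the terms, $a_{\nu,\e}^{X_{\nu,\e},Y_{\nu,\e}}=\tfrac{\e^2}{\nu}\big(\|\de_y^{-1}U\|_{L^2}^2-\nu^2+\cO(\e/\nu+\e^2)\big)$, and dividing by $1-\langle Y_{\nu,\e},X_{\nu,\e}\rangle=1+\cO(\e^2/\nu^2)$ gives the first identity in \cref{exp:NFfinal}. The eigenfunction estimate there follows by observing that $\ttT_{\nu,\e}^{-1}$ sends the new unstable direction $(1,\boldsymbol{0})^\top$ to a multiple of $1-Y_{\nu,\e}(y)$; normalizing it and inserting $Y_{\nu,\e}\approx-\tfrac{\im}{\nu\e}U$ yields the approximation by $U(y)-\im\nu\e$ with the stated accuracy.

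The main obstacle is precisely this expansion: three genuinely distinct contributions of the critical size $\e^2/\nu$ enter $a_{\nu,\e}^{X_{\nu,\e},Y_{\nu,\e}}$, and one must be careful both with the signs and $\im$-factors — so that they combine into a single $+\tfrac{\e^2}{\nu}\|\de_y^{-1}U\|_{L^2}^2$ rather than cancelling or doubling — and with uniformly controlling all the sub-leading remainders: the $\cO(\e/\nu^2)$ corrections in the Neumann series for $\cA_{\nu,\e}^{-1}$ and $[\cA_{\nu,\e}^*]^{-1}$, the $\cO(\e^2)$ correction in $(\de_{yy}-\e^2)^{-1}$, and the $\cO(\e/\nu^3)$ and $\cO(\e^5/\nu^3)$ errors coming from the fixed-point problems of Lemmas~\ref{lem:vheq} and~\ref{lem:hheq}, so as to land the claimed remainder $\cO(\e/\nu+\e^2)$. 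The block-diagonal structure and the formula \cref{stableoperator} are then a routine consequence of Lemma~\ref{lem:abstarctconjugation} once the homological equations have been solved.
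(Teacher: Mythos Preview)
Your proposal is correct and follows essentially the same route as the paper: specialize Lemma~\ref{lem:abstarctconjugation} to $X=X_{\nu,\e}$, $Y=Y_{\nu,\e}$, read off the block-diagonal structure, and expand $a_{\nu,\e}^{X_{\nu,\e},Y_{\nu,\e}}$. The only organizational difference is in how the expansion of $a_{\nu,\e}^{X,Y}$ is carried out. You approximate $\cA_{\nu,\e}^{-1}\approx\nu^{-1}\de_{yy}^{-1}$ and $[\cA_{\nu,\e}^*]^{-1}\approx\nu^{-1}\de_{yy}^{-1}$ first and then compute the three nontrivial summands separately, obtaining the pattern $+,+,-$ which sums to $+\tfrac{\e^2}{\nu}\|\de_y^{-1}U\|^2$. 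The paper instead keeps the exact leading parts $X_{\nu,\e}=\im\e^3[\cA_{\nu,\e}^*]^{-1}(\de_{yy}-\e^2)^{-1}U+\cdots$ and $Y_{\nu,\e}=-\cA_{\nu,\e}^{-1}(\tfrac{\im}{\e}U''+\im\e U)+\cdots$ and observes that, since $\cL_{\nu,\e}^\sharp=\cA_{\nu,\e}-\nu\e^2$, the identity $\cA_{\nu,\e}Y_{\nu,\e}=-(\tfrac{\im}{\e}U''+\im\e U)+\text{small}$ makes the second and fourth summands cancel \emph{exactly} (up to lower order), so that only the third survives and gives the $+\tfrac{\e^2}{\nu}\|\de_y^{-1}U\|^2$; this is of course the same cancellation hidden in your $+$ and $-$. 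The paper's route is slightly cleaner for the error bookkeeping, since after the exact cancellation one only has to expand $\langle\cA_{\nu,\e}^{-1}U'',(\de_{yy}-\e^2)^{-1}U\rangle$ rather than three separate pairings, but your approach is equally valid. Note also that the eigenfunction estimate in \cref{exp:NFfinal} is established in the paper in the proof of Theorem~\ref{teorema coniugio cal Lk} rather than in this lemma, though your brief argument for it is correct.
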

\begin{proof}
In view of Lemmas \ref{lem:vheq} and \ref{lem:hheq} we write
    \begin{equation}\label{XandY}
        X_{\nu,\e} = \im \e^3  [\cA_{\nu,\e}^*]^{-1} (\de_{yy}-\e^2)^{-1}U + \im \frac{\e^5}{\nu^3}  \xi_{\nu,\e}\,,\quad Y_{\nu,\e} = -\cA_{\nu,\e}^{-1}\Big(\frac{\im}{\e}U'' +\im \e U \Big) - \im \frac{\e}{\nu^3} \upsilon_{\nu,\e} 
    \end{equation}
    where
    \begin{equation}\label{xiandupsilon}
       \| \xi_{\nu,\e} \|_{s+2} \lesssim 1
       %\frac{8\|U\|_2 \|U\|_{s-2}^2}{(\nu- 2 C(s) \|U\|_{s+2}\, \e)^3}\,,\quad 
       \qquad \| \upsilon_{\nu,\e} \|_{s+2} \lesssim 1
       %\frac{16 \|\de_{yy}^{-1} U\|_{L^2} \|U\|_{s+2}^2}{(\nu- 2 C(s) \|U\|_{s+2}\, \e )^3}\, .
    \end{equation}
    By Lemmas \ref{lem:abstarctconjugation}, \ref{lem:vheq} and \ref{lem:hheq}, the first column and the first row of the matrix $\ttL_{\nu,\e}^{(1)}$ are zero except for the diagonal entry. 
    We keep the notation of Lemma \ref{lem:abstarctconjugation} and compute the remaining entries. Let us observe that, by \cref{cLsharp,cAnueps},
    \begin{equation}\label{anchequesta}
        \cL_{\nu,\e}^{\sharp} = \cA_{\nu,\e} - \nu\e^2\, .
    \end{equation}
Then we compute    \begin{align}
        &a{\scriptsize \begingroup 
\setlength\arraycolsep{0pt} \begin{matrix} X_{\nu,\e} &,  Y_{\nu,\e} \\ \nu, \e & & \end{matrix}\endgroup} = -\nu\e^2  + \e^3 \langle Y_{\nu,\e}, \im(\de_{yy}-\e^2)^{-1}U \rangle  -\langle \frac{\im}{\e} U''+ \im \e U ,X_{\nu,\e}\rangle- \langle \cL_{\nu,\e}^\sharp Y_{\nu,\e}, X_{\nu,\e} \rangle \\
&\stackrel{\cref{XandY}}{=} -\nu\e^2  -\e^2 \langle \cA_{\nu,\e}^{-1} (U'' + \e^2 U)+ \frac{\e^2}{\nu^3} \upsilon_{\nu,\e}, (\de_{yy}-\e^2)^{-1}U\rangle\\
&\qquad - \e^2 \langle  U'' +\e^2 U,   [\cA_{\nu,\e}^*]^{-1} (\de_{yy}-\e^2)^{-1}U + \frac{\e^2}{\nu^3} \xi_{\nu,\e} \rangle \\
&\qquad + \e^2 \langle  (\cA_{\nu,\e}-\nu\e^2) \big(\cA_{\nu,\e}^{-1} (U''+ \e^2 U) + \frac{\e^2}{\nu^3} \upsilon_{\nu,\e}\big), [\cA_{\nu,\e}^*]^{-1} (\de_{yy}-\e^2)^{-1} U+ \frac{\e^2}{\nu^3} \xi_{\nu,\e} \rangle 
    \end{align}
    where in the last passage we used also \cref{anchequesta}. Isolating the terms of order $\eps^2$, we see a crucial cancellation between the terms in the last two lines appearing above, namely 
    \begin{align}
        - \e^2 \langle  U'' ,   [\cA_{\nu,\e}^*]^{-1} (\de_{yy}-\e^2)^{-1}U \rangle+ \e^2 \langle  \cA_{\nu,\e} \cA_{\nu,\e}^{-1} U'', [\cA_{\nu,\e}^*]^{-1} (\de_{yy}-\e^2)^{-1} U\rangle =0.
    \end{align}
By taking into account similar cancellations happening at lower orders, involving in particular the term $\e^2 \langle  U'' +\e^2 U, \frac{\e^2}{\nu^3} \xi_{\nu,\e} \rangle $ appearing twice with opposite signs in the last and second last line, and the term $ -\frac{\e^4}{\nu^3}\langle  \upsilon_{\nu,\e}, (\de_{yy}-\e^2)^{-1}U\rangle$ appearing twice in the second and last line,
we obtain
\begin{align}
&a{\scriptsize \begingroup 
\setlength\arraycolsep{0pt} \begin{matrix} X_{\nu,\e} &,  Y_{\nu,\e} \\ \nu, \e & & \end{matrix}\endgroup} = - \e^2 \big( \nu + \langle \cA_{\nu,\e}^{-1} U'', (\de_{yy}-\e^2)^{-1}U\rangle \big) - \e^4 \langle U , [\cA_{\nu,\e}^*]^{-1} (\de_{yy}-\e^2)^{-1} U \rangle   \\
&\quad  -\nu\e^4 \langle \cA_{\nu,\e}^{-1} (U''+ \e^2 U) +\frac{\eps^2}{\nu^3}\upsilon_{\nu,\eps}, [\cA_{\nu,\e}^*]^{-1} (\de_{yy}-\e^2)^{-1} U+ \frac{\e^2}{\nu^3} \xi_{\nu,\e} \rangle. 
% &\quad+  \frac{\e^6}{\nu^2} \langle   \upsilon_{\nu,\e} , [\cA_{\nu,\e}^*]^{-1} (\de_{yy}-\e^2)^{-1} U \rangle +  \frac{\e^6}{\nu^6} \langle \upsilon_{\nu,\e} , \xi_{\nu,\e} \rangle \, ,
\end{align}
This gives, by \cref{boundinversecAnueps,boundinversecAstarnueps,xiandupsilon},
\begin{equation}
   a{\scriptsize \begingroup 
\setlength\arraycolsep{0pt} \begin{matrix} X_{\nu,\e} &,  Y_{\nu,\e} \\ \nu, \e & & \end{matrix}\endgroup} = - \e^2 \big( \nu + \langle \cA_{\nu,\e}^{-1} U'', (\de_{yy}-\e^2)^{-1}U\rangle \big) + \cO \Big( \frac{\e^4}{\nu} 
\Big)\, .
\end{equation}
    We now exploit that 
    \begin{equation}
    \label{eq:A-1Usecond}
        \cA_{\nu,\e}^{-1} U'' \stackrel{\cref{cAnueps}}{=} \frac1\nu U + \im \frac{\e}{\nu} \cA_{\nu,\e}^{-1} \cR_{\e}^{\sharp} U
    \end{equation} to further simplify the above expression into
\begin{align}\label{cumbersomea}
 a{\scriptsize \begingroup 
\setlength\arraycolsep{0pt} \begin{matrix} X_{\nu,\e} &,  Y_{\nu,\e} \\ \nu, \e & & \end{matrix}\endgroup} = - \e^2 \big( \nu + \tfrac1{\nu} \langle U, (\de_{yy}-\e^2)^{-1}U\rangle \big) + \cO \Big(\frac{\e^3}{\nu^2}+ \frac{\e^4}{\nu} \Big)\, ,
\end{align}
where we observe that
\begin{align}\label{segnogiusto}
   \langle U,(\de_{yy}-\eps^2)^{-1}U\rangle=-\|\de_y^{-1}U\|^2_{L^2}+\eps^2\langle \de_{yy}^{-1}U,(\de_{yy}-\eps^2)^{-1}U\rangle \, .
\end{align}
Finally, the first diagonal entry of the matrix $\ttL_{\nu,\e}^{(1)}$ is given by, in view of \cref{abstractconj,cumbersomea,segnogiusto,YperX},
\begin{equation}
    \lambda_{\nu,\e}^{(0)} =  \frac{a{\scriptsize \begingroup 
\setlength\arraycolsep{0pt} \begin{matrix} X_{\nu,\e} &,  Y_{\nu,\e} \\ \nu, \e & & \end{matrix}\endgroup}}{1-\langle Y_{\nu,\e}, X_{\nu,\e}\rangle} = \frac{\e^2}{\nu} \Big( \|\de_y^{-1}U\|^2_{L^2} - \nu^2 + \cO \big(\frac{\e}{\nu}+ \eps^2\big)\Big)  \, ,
\end{equation}
which proves the desired expansion in \cref{exp:NFfinal}.
Finally, the definition of the operator $\cL_{\nu,\e}^{(1)}$ descends directly from Lemma \ref{lem:abstarctconjugation}.
 \end{proof}

\subsection{Block-diagonalization of the stable part}\label{block-diagonalization-stable-part}
It now remains to study the operator  $ \cL_{\nu,\e}^{(1)}$  in \cref{stableoperator}.
\begin{equation}
\begin{aligned}
  % &  \cL_{\nu,\e}^{(1)}:L^2_0(\TT) \to L^2_0(\TT)\quad \text{given by}\qquad  \\&
  \cL_{\nu,\e}^{(1)} =  \Big( {\mathcal L}_{\nu, \varepsilon}^\sharp +  A(y) X(y)^\top +  Y(y) B(y)^\top\Big) \circ \Big( {\rm Id} + \frac{1}{1 - \langle X, Y \rangle} Y(y) X(y)^\top \Big)
 % \cL_{\nu,\e}^{S} = \nu \de_{yy}  + \e {\mathcal Q} \, ,
   \end{aligned}
\end{equation}
where 
%$\cL_{\nu,\eps}^\sharp$ is defined in \cref{cLsharp},
$X \equiv X_{\nu, \varepsilon}$ and $Y \equiv Y_{\nu, \varepsilon}$ come from Lemmas \ref{lem:hheq} and \ref{lem:vheq} respectively, and fulfill
%, and $A,B$ in \cref{def:Anor,def:Bnor}.
% (recall \eqref{cD}, \eqref{cMcR})
% \begin{equation}\label{def A B cal R epsilon per normal-form}
% \begin{aligned}
% & {\mathcal L}_{\nu, \varepsilon}^\sharp = \Pi_{\neq} {\mathcal L}_{\nu, \varepsilon} \Pi_{\neq} = \nu \partial_{yy} + {\mathcal R}_\varepsilon\,, \\
% & {\mathcal R}_\varepsilon \coloneqq  - \nu \varepsilon^2 \Pi_{\neq} -\im\varepsilon \Pi_{\neq} U(y) \Pi_{\neq} -\im\varepsilon \Pi_{\neq} U''(y) (- \partial_{yy} + \varepsilon^2)^{- 1} \Pi_{\neq}\,, \\
% & A(y) \coloneqq  \nu \varepsilon^2 Y(y) +\im U''(y) \varepsilon^{- 1} +\im\varepsilon U(y)\,, \\
% & B(y) \coloneqq \im\varepsilon^3 (\partial_{yy} - \varepsilon^2)^{- 1} U(y)
% \end{aligned}
% \end{equation}
%  where $X \equiv X_{\nu, \varepsilon}$ and $Y \equiv Y_{\nu, \varepsilon}$ are given in Lemmata \ref{lem:vheq}, \ref{lem:hheq}. 
% Note that since $U \in H^{s + 2}(\TT)$, one has  
\begin{equation}\label{stima X Y per block decoupling}
\| X \|_{s + 2} \lesssim_s \varepsilon^3 \nu^{- 1} \quad \text{and} \quad \| Y \|_{s + 2} \lesssim_s \varepsilon^{- 1} \nu^{- 1}\, .
\end{equation}
We observe  that, under condition \cref{invertibilityconditionstep1} with $\delta_0$ small enough,  we have, by \cref{YperX,def:Anor,def:Bnor,stima X Y per block decoupling},
\begin{equation}\label{stima A B per block decoupling}
\| A \|_s \lesssim_s \varepsilon^{- 1}\,, \quad \| B \|_{s + 4} \lesssim_s \varepsilon^3\,, \quad |\langle X, Y \rangle| \lesssim \e^2 \nu^{- 2}\,.
\end{equation}
We thus  write the linear operator
${\mathcal L}_{\nu, \varepsilon}^{(1)}$ as a perturbation of the operator $\nu\de_{yy}$, namely
\begin{equation}\label{forma cal L nu varepsilon per normal-form}
{\mathcal L}_{\nu, \varepsilon}^{(1)}  = \nu \partial_{yy}+\cQ 
\end{equation}
where the operator ${\mathcal Q}:H^s_0(\TT) \to H^s_0(\TT)$  is given by
\begin{align}
\label{def:Q}
{\mathcal Q} \coloneqq \,& \frac{\nu}{1 - \langle X, Y \rangle} Y''(y) X(y)^\top \\
&+ \Big( -\im \eps{\mathcal R}_\varepsilon  + A(y) X(y)^\top + Y(y) B(y)^\top\Big) \circ \Big( {\rm Id} + \frac{1}{1 - \langle X, Y \rangle} Y(y) X(y)^\top \Big)\,,
\end{align}
where $Y''$ is the second derivative of the function $Y$. To block-diagonalize the operator ${\mathcal L}_{\nu, \varepsilon}^{(1)}$ via a normal-form reduction scheme, we introduce the following
\subsubsection*{\textbf{Technical tools}}
First of all, we recall the notation for the orthogonal projections $\Pi_j$ given in \cref{def:Pij} the matrix representation in \cref{def:Rij}. We denote by ${\mathcal M}_{bd}$, the set of the block diagonal operators, namely
\begin{equation}\label{def-block-diag}
{\mathcal M}_{bd} \coloneqq  \big\{ {\mathcal T} \in \cB(L^2_0, L^2_0)\; :\; \Pi_j {\mathcal T} \Pi_{j'} = 0, \quad \forall j \neq j' \big\}\,.
\end{equation}
Then, given $s \geq 0$ and a closed linear operator $\cT$ on $L^2(\TT)$, we define its block-decay norm as
\begin{align}\label{def-decay-norm}
    |{\mathcal T}|_s \coloneqq  \sup_{j' \in {\mathbb N}} \Big( \sum_{j \in {\mathbb N}} \langle j - j' \rangle^{2s} \|  \Pi_{j}{\mathcal T}\, \Pi_{j'} \|_{2\times 2}^2 \Big)^{\frac12}\,,\qquad \langle x\rangle\coloneqq (1+x^2)^{\frac12}\,,
\end{align}
where $\|\cdot\|_{2\times 2}$ is the standard Hilbert-Schmidt of $2\times 2$ matrix. We also define the space $ \mathcal{M}^s$ containing all  the operators $\cT$ for which $|\cT|_s < +\infty$. Clearly one can verify immediately that 
\begin{equation}\label{inclusioni-cal-Ms}
0 \leq s \leq s' \Longrightarrow {\mathcal M}^{s'} \subseteq {\mathcal M}^s \quad \text{and} \quad |\cdot|_s \leq |\cdot|_{s'}\,. 
\end{equation}
By exploiting the definitions of block-diagonal operator and the one of the norm $|\cdot|_s$, for every ${\mathcal T} \in {\mathcal M}_{bd}$ and $s \geq 0$, we have that ${\mathcal T} \in {\mathcal M}^s$ and 
$
|{\mathcal T}|_s = \sup_{j \in {\mathbb N}} \| \Pi_j {\mathcal T} \Pi_j \|_{2\times 2} 
$.
For any $s \geq 0$, we also define the set with ``off-diagonal'' operators
\begin{equation}\label{def-off-diag}
{\mathcal M}^s_{od} \coloneqq  \big\{ {\mathcal T} \in {\mathcal M}^s : \Pi_j {\mathcal T} \Pi_j = 0, \quad \forall j \in {\mathbb N} \big\}\,. 
\end{equation}
Clearly 
$$
{\mathcal M}^s = {\mathcal M}_{bd} \oplus {\mathcal M}^s_{od}, \quad \forall s \geq 0\,,
$$
and let $\Pi_{bd}$ and $ \Pi_{od}$ be the projections respectively onto ${\mathcal M}_{bd}$ and $ {\mathcal M}_{od}^s$  induced by the direct sum above.

Let us recall from \cite{KIRCHOFF} the main crucial quantitative properties of the  space ${\mathcal M}^s$.  
\begin{lemma}\label{lemma-algebra-decay}
Let $s>1/2$ and $\cT,\cT_1,\cT_2$ be operators in $\cM^s$. The following holds true: 
  \begin{enumerate}[label=(\roman*)]
      \item \label{item:algebra}The space $\cM_s$ is an algebra, namely
    \begin{equation}
        |\cT_1 \cT_2|_s \lesssim |\cT_1|_s |\cT_2|_s\,, 
    \end{equation}
    \item \label{item:nalgebra} There exists a constant $C(s) > 0$ such that for any integer $n \geq 1$, $$|{\mathcal T}^n|_s \leq C(s)^{n - 1} |{\mathcal T}|_s^n.$$
    \item \label{item:Id+T}There exists a constant $\delta=\delta(s) \in (0, 1)$ such that, if $|{\mathcal T}|_s \leq \delta$, then the operator ${\rm Id} + {\mathcal T}$ is invertible and $({\rm Id} + {\mathcal T})^{- 1} \in {\mathcal M}^s$ with
   $$
   \Big| ({\rm Id} + {\mathcal T})^{- 1} - {\rm Id} \Big|_s \lesssim |{\mathcal T}|_s\,. 
   $$
   \item \label{item:Bsigma}For any $0 \leq \sigma \leq s$, one has ${\mathcal T} \in {\mathcal B}(H^\sigma_0, H^\sigma_0)$ with
   $$
   \| {\mathcal T} \|_{{\mathcal B}(H^\sigma_0, H^\sigma_0)} \lesssim  |{\mathcal T}|_s \,. 
   $$
  \end{enumerate}
\end{lemma}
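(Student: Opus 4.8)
The plan is to establish the four items in order, with the algebra property \ref{item:algebra} as the technical heart; \ref{item:nalgebra} is then an immediate induction, \ref{item:Id+T} follows from a Neumann series, and \ref{item:Bsigma} is a minor variant of the estimate behind \ref{item:algebra}. Throughout one uses three elementary facts: the sub-multiplicativity of the Hilbert--Schmidt norm on $2\times 2$ matrices, $\|AB\|_{2\times 2}\le \|A\|_{2\times 2}\|B\|_{2\times 2}$; the block-composition identity $\Pi_j\mathcal{T}_1\mathcal{T}_2\Pi_{j'}=\sum_{k\ge 1}(\Pi_j\mathcal{T}_1\Pi_k)(\Pi_k\mathcal{T}_2\Pi_{j'})$, convergent in operator norm; and the weight-splitting inequality $\langle j-j'\rangle^s\le C_s\big(\langle j-k\rangle^s+\langle k-j'\rangle^s\big)$, a consequence of $|j-j'|\le |j-k|+|k-j'|$. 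The only place the hypothesis $s>\tfrac12$ is used is through the summability of $c_s:=\sum_{m\in\mathbb Z}\langle m\rangle^{-2s}<\infty$, restricting to $j,j'\in\mathbb N$ so that the zero mode plays no role.

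For \ref{item:algebra} I would fix the column index $j'$ and estimate $\sum_{j}\langle j-j'\rangle^{2s}\|\Pi_j\mathcal{T}_1\mathcal{T}_2\Pi_{j'}\|_{2\times 2}^2$. Using the composition identity and sub-multiplicativity, then inserting the weight split, one bounds $\langle j-j'\rangle^s\|\Pi_j\mathcal{T}_1\mathcal{T}_2\Pi_{j'}\|_{2\times 2}$ by the sum of a ``$\mathcal{T}_1$-heavy'' contribution $\sum_k\big(\langle j-k\rangle^s\|\Pi_j\mathcal{T}_1\Pi_k\|_{2\times 2}\big)\,\|\Pi_k\mathcal{T}_2\Pi_{j'}\|_{2\times 2}$ and a symmetric ``$\mathcal{T}_2$-heavy'' one. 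In each term I would extract a factor $\langle \cdot\rangle^{-s}$ from the ``light'' operator and apply Cauchy--Schwarz in $k$, so that after pairing the surviving $\ell^2$-in-$k$ factor against $c_s$ one is left with genuine $|\mathcal{T}_1|_s$- and $|\mathcal{T}_2|_s$-type quantities; taking the $\ell^2$-norm in $j$ and invoking the column structure of $|\cdot|_s$ yields $|\mathcal{T}_1\mathcal{T}_2|_s\le C(s)\,|\mathcal{T}_1|_s\,|\mathcal{T}_2|_s$. This bookkeeping — distributing the off-diagonal weight so that every remaining sum is controlled by the decay norm, which works precisely because $\langle m\rangle^{-2s}$ is summable — is the step I expect to be the most delicate.

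Granting \ref{item:algebra}, item \ref{item:nalgebra} follows by induction: $|\mathcal{T}^n|_s=|\mathcal{T}\,\mathcal{T}^{n-1}|_s\le C(s)\,|\mathcal{T}|_s\,|\mathcal{T}^{n-1}|_s$ gives $|\mathcal{T}^n|_s\le C(s)^{n-1}|\mathcal{T}|_s^{\,n}$. For \ref{item:Id+T} I would set $\delta(s):=\tfrac1{2C(s)}$; if $|\mathcal{T}|_s\le\delta(s)$ then $\sum_{n\ge 1}|\mathcal{T}^n|_s\le |\mathcal{T}|_s\sum_{n\ge 1}(C(s)|\mathcal{T}|_s)^{n-1}\le 2|\mathcal{T}|_s$, so the Neumann series $\mathcal{S}:=\sum_{n\ge 0}(-\mathcal{T})^n$ converges in $\mathcal{M}^s$ (and, by \ref{item:Bsigma}, as a bounded operator on $L^2_0$); the usual telescoping argument, again using \ref{item:algebra} for the partial products, shows $\mathcal{S}(\mathrm{Id}+\mathcal{T})=(\mathrm{Id}+\mathcal{T})\mathcal{S}=\mathrm{Id}$, and $|\mathcal{S}-\mathrm{Id}|_s=\big|\sum_{n\ge1}(-\mathcal{T})^n\big|_s\le 2|\mathcal{T}|_s\lesssim|\mathcal{T}|_s$, which is the claimed bound.

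Finally, for \ref{item:Bsigma} I would write $\|\mathcal{T}h\|_\sigma^2\simeq\sum_j\langle j\rangle^{2\sigma}\|\Pi_j\mathcal{T}h\|_{L^2}^2$, expand $\Pi_j\mathcal{T}h=\sum_{j'}(\Pi_j\mathcal{T}\Pi_{j'})(\Pi_{j'}h)$, and bound $\|\Pi_j\mathcal{T}h\|_{L^2}\le\sum_{j'}\|\Pi_j\mathcal{T}\Pi_{j'}\|_{2\times 2}\|\Pi_{j'}h\|_{L^2}$. Since $0\le\sigma\le s$ one has $\langle j\rangle^\sigma\le C_\sigma\langle j-j'\rangle^\sigma\langle j'\rangle^\sigma\le C_s\langle j-j'\rangle^s\langle j'\rangle^\sigma$, so that after distributing the weight $\langle j\rangle^\sigma$ the very same Cauchy--Schwarz-against-$c_s$ mechanism as in \ref{item:algebra} gives $\sum_j\langle j\rangle^{2\sigma}\|\Pi_j\mathcal{T}h\|_{L^2}^2\le C|\mathcal{T}|_s^2\sum_{j'}\langle j'\rangle^{2\sigma}\|\Pi_{j'}h\|_{L^2}^2=C|\mathcal{T}|_s^2\|h\|_\sigma^2$, i.e.\ $\|\mathcal{T}\|_{\mathcal B(H^\sigma_0,H^\sigma_0)}\lesssim|\mathcal{T}|_s$.
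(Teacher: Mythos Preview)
Your sketch for items \ref{item:algebra}--\ref{item:Id+T} is correct and matches the standard arguments behind the cited Lemmas 2.7--2.9 of \cite{KIRCHOFF}: the additive weight split plus Cauchy--Schwarz exploits the column structure of $|\cdot|_s$ exactly as you describe, and \ref{item:nalgebra}, \ref{item:Id+T} follow formally.

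There is, however, a gap in your argument for \ref{item:Bsigma} when $\sigma>0$. After the Peetre bound $\langle j\rangle^\sigma\le C_s\langle j-j'\rangle^s\langle j'\rangle^\sigma$ you are left with
\[
\langle j\rangle^\sigma\|\Pi_j\mathcal T h\|_{L^2}\lesssim \sum_{j'}\bigl(\langle j-j'\rangle^s T_{jj'}\bigr)\beta_{j'},\qquad \beta_{j'}=\langle j'\rangle^\sigma\|\Pi_{j'}h\|_{L^2},
\]
and you claim that ``the same Cauchy--Schwarz-against-$c_s$ mechanism'' closes. It does not: the mechanism in \ref{item:algebra} worked because there were \emph{two} operators, so after CS in the middle index you could sum over $j$ using the column bound of $\mathcal T_1$ and over $k$ using that of $\mathcal T_2$. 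Here there is only one operator and one free index; any CS in $j'$ that produces $c_s$ leaves a factor $\sum_{j'}\langle j-j'\rangle^{2s}T_{jj'}^2$ or $\sum_{j'}\langle j-j'\rangle^{2(s+\sigma)}T_{jj'}^2$, which are \emph{row} sums not controlled by the column-based norm $|\mathcal T|_s$. Your $\sigma=0$ case is fine (no Peetre needed: insert $\langle j-j'\rangle^{-s}\langle j-j'\rangle^{s}$, CS, then swap and use the column bound), and if you keep the sharper Peetre factor $\langle j-j'\rangle^\sigma$ rather than $\langle j-j'\rangle^s$ the same scheme works whenever $s-\sigma>\tfrac12$; but the range $\sigma\in[s-\tfrac12,s]$ does not follow from this bookkeeping. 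For the full range you either need an additional ingredient (e.g.\ interpolation after establishing a second endpoint, or a Schur-type bound that also uses row information), or you should check that the norm in the cited Lemma~2.10 of \cite{KIRCHOFF} carries the extra structure that your asymmetric column norm alone does not.
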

\begin{proof}
The properties \textit{(i)-(iv)} directly follow from Lemmas 2.7-2.10 in \cite{KIRCHOFF}.
\end{proof}
In Proposition \ref{prop-block-diag-contraction} below we need to compose block-diagonal and off-diagonal operators, therefore we observe that
\begin{equation}\label{lemma-comp-diag-block-diag}
 {\mathcal Z} \cT, \cT {\mathcal Z} \in {\mathcal M}^s_{od} \qquad \mbox{for every }{\mathcal Z} \in {\mathcal M}_{bd}, {\mathcal T} \in {\mathcal M}^s_{od}. 
\end{equation}
Indeed for any $j \in {\mathbb N}$, one has that 
$$
\Pi_j {\mathcal Z} \cT \Pi_j = \sum_{j' \in {\mathbb N}} (\Pi_j {\mathcal Z} \Pi_{j'}) (\Pi_{j'} \cT \Pi_j) = (\Pi_j {\mathcal Z} \Pi_j) (\Pi_j \cT \Pi_j) = 0
$$
where we used that, by definition, $\Pi_j {\mathcal Z} \Pi_{j' } = 0$ if $j \neq j'$ and $\Pi_j \cT \Pi_j = 0$ for any $j \in {\mathbb N}$. 

In order to estimate the block decay norm $|\cdot |_s$ of the remainder ${\mathcal Q}$ in \cref{forma cal L nu varepsilon per normal-form}, we also need the following
\begin{lemma}\label{stima resto cal L nu (1) cal Q}
Let $m,s\geq 0$, $a, Q\in H^s_0(\TT)$ and $P\in H^{s+m}_0(\TT)$. The following holds true:
\begin{enumerate}[label=(\roman*)]
    \item \label{item:multiplication}The multiplication operator ${\mathrm M}_a : u \mapsto a u$ belongs to ${\mathcal M}^s$, with $|{\mathrm M}_a|_s \lesssim \| a \|_s$.    
    \item  \label{item:top}The operator ${\mathcal A} \coloneqq  (\partial_y^m P) Q^\top \in {\mathcal M}^s$ and $$|(\partial_y^m P) Q^\top|_s \lesssim \| P \|_{s + m} \| Q \|_s.$$
\end{enumerate}
\end{lemma}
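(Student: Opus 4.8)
The plan is to estimate the block-decay norm $|\cdot|_s$ of each operator directly from its matrix elements in the Fourier basis, as defined in \cref{def-decay-norm}. For item \ref{item:multiplication}, I would compute the matrix elements of the multiplication operator ${\mathrm M}_a$: since $({\mathrm M}_a)_j^{j'} = \langle a e^{\im j' y}, e^{\im j y}\rangle = a_{j - j'}$, each $2\times 2$ block $\Pi_j {\mathrm M}_a \Pi_{j'}$ has entries among $\{a_{j-j'}, a_{j+j'}, a_{-j-j'}, a_{-j+j'}\}$, hence its Hilbert--Schmidt norm is bounded by a fixed multiple of $\max(|a_{j-j'}|, |a_{j+j'}|)$ (using that $a$ has zero average, so the index $0$ contributes nothing off the band $j \sim j'$). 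Plugging into \cref{def-decay-norm} gives $|{\mathrm M}_a|_s^2 \lesssim \sup_{j'} \sum_j \langle j - j'\rangle^{2s}(|a_{j-j'}|^2 + |a_{j+j'}|^2)$; the first sum is exactly $\|a\|_s^2$ after reindexing $\ell = j - j'$, while the second sum is controlled by $\|a\|_s^2$ as well since $\langle j - j'\rangle \leq \langle j + j'\rangle$ for $j, j' \geq 0$ (so the weight only helps). Thus $|{\mathrm M}_a|_s \lesssim \|a\|_s$.

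For item \ref{item:top}, I would first reduce to the case $m = 0$: the operator $(\partial_y^m P) Q^\top$ equals $\widetilde{P} Q^\top$ where $\widetilde{P} \coloneqq \partial_y^m P \in H^s_0(\TT)$ with $\|\widetilde{P}\|_s \leq \|P\|_{s+m}$, so it suffices to prove $|\widetilde{P}\, Q^\top|_s \lesssim \|\widetilde{P}\|_s \|Q\|_s$. The rank-one operator $\widetilde{P} Q^\top$ sends $h \mapsto \langle h, Q\rangle \widetilde{P}$, so its matrix elements factor as $(\widetilde{P} Q^\top)_j^{j'} = \widetilde{P}_j \overline{Q_{j'}}$. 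The $2\times 2$ block $\Pi_j (\widetilde{P} Q^\top) \Pi_{j'}$ therefore has Hilbert--Schmidt norm bounded by a fixed multiple of $(|\widetilde{P}_j| + |\widetilde{P}_{-j}|)(|Q_{j'}| + |Q_{-j'}|)$. Substituting into \cref{def-decay-norm}:
\begin{equation}
|\widetilde{P} Q^\top|_s^2 \lesssim \sup_{j' \in \NN}\Big( (|Q_{j'}|^2 + |Q_{-j'}|^2) \sum_{j \in \NN} \langle j - j'\rangle^{2s}(|\widetilde{P}_j|^2 + |\widetilde{P}_{-j}|^2)\Big)\,.
\end{equation}
The outer factor is bounded by $\|Q\|_s^2$ (indeed by $\|Q\|_{L^2}^2$), while for the inner sum I would use $\langle j - j'\rangle^{2s} \lesssim_s \langle j\rangle^{2s}\langle j'\rangle^{2s}$ (Peetre-type inequality), pull out $\langle j'\rangle^{2s}$ — which combines with the $L^2$-norm of $Q$ to give at most $\|Q\|_s^2$ — and bound $\sum_j \langle j\rangle^{2s}(|\widetilde{P}_j|^2 + |\widetilde{P}_{-j}|^2) \lesssim \|\widetilde{P}\|_s^2$. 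This yields $|\widetilde{P} Q^\top|_s \lesssim \|\widetilde{P}\|_s \|Q\|_s \leq \|P\|_{s+m}\|Q\|_s$.

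The only mild subtlety — the place I would be most careful — is the bookkeeping with the folded index set $j \in \NN$ versus $j \in \ZZ$, i.e.\ the fact that each block $\Pi_j$ bundles the modes $\pm j$ together, so that each matrix element of the operator appears in up to four entries of a block and the weight $\langle j - j'\rangle$ (folded) must be compared with the unfolded weights $\langle j \mp j'\rangle$. All of these comparisons go through because $\langle j - j'\rangle \leq \langle j + j'\rangle$ for nonnegative $j, j'$ and because one can always afford the Peetre inequality $\langle j - j'\rangle^{2s} \lesssim_s \langle j\rangle^{2s}\langle j'\rangle^{2s}$ in the rank-one case; there are no small divisors or cancellations to track. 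In fact, both statements are recorded (in essentially this form) as Lemmas 2.7--2.10 in \cite{KIRCHOFF}, so one may alternatively simply invoke those; I include the direct argument above for completeness.
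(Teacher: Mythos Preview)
Your proof is correct and follows essentially the same route as the paper: compute the $2\times 2$ blocks explicitly in Fourier, use $\langle j-j'\rangle \leq \langle j+j'\rangle$ for $j,j'\geq 0$ to handle the ``folded'' indices in part~(i), and use the Peetre-type inequality $\langle j-j'\rangle^{2s}\lesssim\langle j\rangle^{2s}\langle j'\rangle^{2s}$ for the rank-one operator in part~(ii). The only cosmetic difference is that the paper keeps the $\partial_y^m$ explicit (writing $|j|^{m}P_j$ in the blocks) rather than first substituting $\widetilde P=\partial_y^m P$; your aside that these estimates appear as Lemmas~2.7--2.10 in \cite{KIRCHOFF} is not quite right---that citation in the paper supports the algebra properties of Lemma~\ref{lemma-algebra-decay}, not the present lemma---but this does not affect your argument.
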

\begin{proof} %We provide the detailed proof for each item below. We shall 
Recalling the notation \cref{FourierCoefficients}, we have that for any $j, j' \in {\mathbb N}$, 
$$
\Pi_j {\mathrm M}_a \Pi_{j'} \equiv \begin{pmatrix}
a_{j - j'} & a_{j + j'} \\
a_{- j - j'} & a_{j' - j}
\end{pmatrix}\,.
$$
Using the trivial fact that if $j, j' \in {\mathbb N}$, then $\langle j - j' \rangle^s = \langle j' - j \rangle^s \leq \langle j + j' \rangle^s = \langle - j - j' \rangle^s$, one has that for any $j' \in {\mathbb N}$
$$
\begin{aligned}
\sum_{j \in {\mathbb N}} \langle j - j' \rangle^{2 s} \| \Pi_j {\mathrm M}_a \Pi_{j'}\|_{2\times 2}^2 & = \sum_{j \in {\mathbb N}} \langle j - j' \rangle^{2s} | a_{j - j'}|^2 + \sum_{j \in {\mathbb N}} \langle j - j' \rangle^{2s} | a_{j + j'}|^2 \\
& \quad + \sum_{j \in {\mathbb N}} \langle j - j' \rangle^{2s} |a_{- j + j'}|^2 + \sum_{j \in {\mathbb N}} \langle j - j' \rangle^{2s} |a_{- j - j'}|^2 \\
& \leq \sum_{j \in {\mathbb N}} \langle j - j' \rangle^{2s} |a_{j - j'}|^2 + \sum_{j \in {\mathbb N}} \langle j + j' \rangle^{2s} |a_{j + j'}|^2 \\
& \quad + \sum_{j \in {\mathbb N}} \langle j' - j \rangle^{2s} |a_{ j' - j}|^2 + \sum_{j \in {\mathbb N}} \langle - j - j' \rangle^{2s} |a_{- j - j'}|^2 \leq 4\| a \|_s^2\,.
\end{aligned}
$$
The claimed bound (i) then follows by taking the supremum over $j' \in {\mathbb N}$.

\smallskip

%\noindent
%{\it Proof of \cref{item:top}.} 
A direct calculation shows that 
$$
\Pi_j {\mathcal A} \Pi_{j'} \equiv \begin{pmatrix}
\im^m j^m P_j  Q_{j'} & \im^m j^m P_{j}  Q_{- j'} \\[2mm]
\im^m (- j)^m  P_{-j} Q_{j'} & \im^m (- j)^m  P_{- j}  Q_{- j'}
\end{pmatrix}
$$
Hence, for any $j' \in {\mathbb N}$, one has that 
$$
\begin{aligned}
\sum_{j \in {\mathbb N}} \langle j - j' \rangle^{2s} \| \Pi_j {\mathcal A} \Pi_{j'} \|_{2\times 2}^2 & \lesssim \sum_{j \in {\mathbb N}} \langle j - j' \rangle^{2s} |j|^{2 m} |P_{j}|^2 |Q_{j'}|^2 + \sum_{j \in {\mathbb N}} \langle j - j' \rangle^{2s}|j|^{2 m} |P_{j}|^2 |Q_{-j'}|^2 \\
& \quad + \sum_{j \in {\mathbb N}} \langle j - j' \rangle^{2s} |j|^{2 m} |P_{-j}|^2 |Q_{j'}|^2 \\
&\quad + \sum_{j \in {\mathbb N}} \langle j - j' \rangle^{2s} |j|^{2 m} |P_{-j}|^2 |Q_{-j'}|^2 \,.
\end{aligned}
$$
Using that for any $\alpha \geq 0$, $\langle j - j' \rangle^\alpha \lesssim \langle j \rangle^\alpha + \langle j' \rangle^\alpha \lesssim \langle j \rangle^\alpha \langle j' \rangle^\alpha$ (and clearly that $\langle k \rangle = \langle - k \rangle$ for any $k$), one obtains (ii) (passing to the supremum over $j'$ in the latter sum)
$$
\begin{aligned}
|{\mathcal A}|_s^2 & \lesssim \sup_{j' \in {\mathbb Z} \setminus \{ 0 \}} \sum_{j \in {\mathbb Z} \setminus \{0 \} }\langle j \rangle^{2 (s + m)} |P_{j}|^2 \langle j' \rangle^{2 s} |Q_{j'}|^2  \\
& \lesssim_s  \| P \|_{s + m}^2 \| Q \|_s^2\,.
\end{aligned}
$$
\end{proof}

Finally, a key point in the normal-form reduction scheme is to be able to find the desired transformation by solving a commutator equation. In our specific setting, we need the following.

\begin{lemma}\label{eq-omologica-Pneq}
Let $s \geq 0$, ${\mathcal R} \in {\mathcal M}^s_{od}$. Then there exists a unique solution $\Psi  \in {\mathcal M}^{s + 1}_{od}$ of the equation
$$
[\nu \partial_{yy}\,,\, \Psi] + {\mathcal R}= 0.
$$
The solution operator $ {\bf T}({\mathcal R})\coloneqq \Psi$ satisfies the bounds
 \begin{equation}
     |{\bf T}({\mathcal R})|_s \leq |{\bf T}({\mathcal R})|_{s + 1} \lesssim \nu^{- 1} |{\mathcal R}|_s.
 \end{equation} 
\end{lemma}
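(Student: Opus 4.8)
The plan is to solve the homological equation $[\nu\partial_{yy},\Psi]+\mathcal{R}=0$ by working block by block in the Fourier basis. Since both $\mathcal{R}$ and the sought $\Psi$ are off-diagonal operators in $\mathcal{M}^s_{od}$, it suffices to determine each $2\times 2$ block $\Pi_j\Psi\Pi_{j'}$ for $j\neq j'$ in $\mathbb{N}$. Observe that $\nu\partial_{yy}$ is diagonal in the standard Fourier basis: on $\Pi_j$ it acts as multiplication by $-\nu j^2$. Hence, projecting the equation onto $\Pi_j(\cdot)\Pi_{j'}$ and using that $\Pi_j(\nu\partial_{yy})\Pi_j=-\nu j^2\,\Pi_j$, we get
\begin{equation}\label{eq:blockhom}
-\nu j^2\,\Pi_j\Psi\Pi_{j'}+\nu j'^2\,\Pi_j\Psi\Pi_{j'}+\Pi_j\mathcal{R}\Pi_{j'}=0\,,
\end{equation}
so that, setting $\Psi_j^{j'}\coloneqq\Pi_j\Psi\Pi_{j'}$ and $\mathcal{R}_j^{j'}\coloneqq\Pi_j\mathcal{R}\Pi_{j'}$ (viewed as $2\times 2$ matrices via \cref{def:Rij}), the unique solution is
\begin{equation}\label{eq:blocksol}
\Psi_j^{j'}=\frac{1}{\nu(j^2-j'^2)}\,\mathcal{R}_j^{j'}\,,\qquad j\neq j'\,,
\end{equation}
and $\Pi_j\Psi\Pi_j=0$, which defines $\mathbf{T}(\mathcal{R})\coloneqq\Psi$. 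Uniqueness is immediate from \cref{eq:blockhom}: since $j^2-j'^2\neq 0$ whenever $j\neq j'$ are in $\mathbb{N}$, the block equation has a unique solution, and the constraint $\Psi\in\mathcal{M}^{s+1}_{od}$ forces the diagonal blocks to vanish.

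Next I would establish the quantitative bound. The crucial elementary fact is the absence of small divisors: for distinct $j,j'\in\mathbb{N}$ one has $|j^2-j'^2|=|j-j'|(j+j')\geq j+j'\geq 1$, and moreover $|j-j'|(j+j')\gtrsim\langle j-j'\rangle$ since either $|j-j'|\geq 1$ and $j+j'\geq |j-j'|$ give $|j^2-j'^2|\gtrsim|j-j'|^2\gtrsim\langle j-j'\rangle^2$ when $|j-j'|\geq 1$; in any case $|j^2-j'^2|\geq 1$. Therefore from \cref{eq:blocksol},
\begin{equation}\label{eq:blockbd}
\|\Psi_j^{j'}\|_{2\times 2}\leq\frac{1}{\nu}\,\|\mathcal{R}_j^{j'}\|_{2\times 2}\,,
\end{equation}
and in fact $\langle j-j'\rangle\,\|\Psi_j^{j'}\|_{2\times 2}\leq\nu^{-1}\langle j-j'\rangle\,\|\mathcal{R}_j^{j'}\|_{2\times 2}$ using $|j^2-j'^2|\geq\langle j-j'\rangle$ (valid for $j\neq j'$ in $\mathbb{N}$, as $|j-j'|\geq1$ and $j+j'\geq 1$ yield $|j-j'|(j+j')\geq\max\{|j-j'|^2,|j-j'|\}\gtrsim\langle j-j'\rangle$). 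Squaring, inserting the weight $\langle j-j'\rangle^{2s}$, summing over $j\in\mathbb{N}$ and taking the supremum over $j'\in\mathbb{N}$ gives
\begin{equation}\label{eq:decayT}
|\mathbf{T}(\mathcal{R})|_{s+1}^2=\sup_{j'}\sum_{j}\langle j-j'\rangle^{2(s+1)}\|\Psi_j^{j'}\|_{2\times 2}^2\lesssim\frac{1}{\nu^2}\sup_{j'}\sum_{j}\langle j-j'\rangle^{2s}\|\mathcal{R}_j^{j'}\|_{2\times 2}^2=\frac{1}{\nu^2}|\mathcal{R}|_s^2\,,
\end{equation}
which is the claimed estimate, and $|\mathbf{T}(\mathcal{R})|_s\leq|\mathbf{T}(\mathcal{R})|_{s+1}$ follows from \cref{inclusioni-cal-Ms}. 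Finally, $\mathbf{T}(\mathcal{R})\in\mathcal{M}^{s+1}_{od}$ since its diagonal blocks vanish by construction and \cref{eq:decayT} shows the $|\cdot|_{s+1}$ norm is finite.

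The verification that $\Psi$ so defined genuinely solves $[\nu\partial_{yy},\Psi]+\mathcal{R}=0$ as operators (not merely block by block) is routine: one checks the identity on the dense span of Fourier exponentials, where everything reduces to the finite-dimensional block computation \cref{eq:blockhom}, and then extends by boundedness using Lemma \ref{lemma-algebra-decay}\ref{item:Bsigma} applied to $\Psi$ and $\mathcal{R}$. I do not expect any genuine obstacle here; the only point requiring a little care is the two-sided comparison $1\leq|j^2-j'^2|$ and $|j^2-j'^2|\gtrsim\langle j-j'\rangle$ for distinct naturals, which is what makes the gain of one derivative (from $\mathcal{M}^s_{od}$ to $\mathcal{M}^{s+1}_{od}$) possible and is the structural reason there are no small divisors in this problem.
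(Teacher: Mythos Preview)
Your proof is correct and follows essentially the same approach as the paper: solve the homological equation block by block via $\Pi_j\Psi\Pi_{j'}=\frac{1}{\nu(j^2-j'^2)}\Pi_j\mathcal{R}\Pi_{j'}$, then use the divisor lower bound $|j^2-j'^2|\gtrsim|j-j'|$ (equivalently $\gtrsim\langle j-j'\rangle$ for distinct naturals) to gain one unit of decay. The paper's write-up is slightly terser, simply noting $|j^2-j'^2|\gtrsim|j-j'|$ without the intermediate discussion of $\langle j-j'\rangle^2$, but the content is the same.
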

\begin{proof}
We need to look for an operator $\Psi$ that satisfies for any $j , j' \in {\mathbb N}$, $j \neq j'$
$$
\Pi_j \Big( [\nu\partial_{yy}\,,\, \Psi] + {\mathcal R} \Big) \Pi_{j'} = 0
$$
namely 
$$
\nu (j'^2 - j^2) \Pi_j \Psi \Pi_{j'} + \Pi_j {\mathcal R} \Pi_{j'} = 0, \quad \forall j, j' \in {\mathbb N}, \quad j \neq j'\,. 
$$
We then define the unique solution as
$$
\begin{aligned}
& \Psi \coloneqq  {\bf T}({\mathcal R}) \coloneqq   \sum_{\begin{subarray}{c}
j, j' \in {\mathbb N} \\
j \neq j'
\end{subarray}}  \frac{1}{\nu (j^2 - j'^2)} \Pi_j {\mathcal R} \Pi_{j'}\,.
\end{aligned}
$$
Note that $|j^2 - j'^2| \gtrsim |j - j'|$ for any $j \neq j'$.
Thus, for every $j'\in {\mathbb N}$, 
$$
\sum_{j \in {\mathbb N}} \langle j - j' \rangle^{s + 1} \| \Pi_j {\bf T}({\mathcal R}) \Pi_{j'} \|_{2\times 2}^2 \lesssim \nu^{- 2} \sum_{j \in {\mathbb N}} \langle j - j' \rangle^{2 s} \| \Pi_j {\mathcal R} \Pi_{j'} \|_{2\times 2}^2 \lesssim \nu^{- 2} |{\mathcal R}|_s^2.
$$
By taking the supremum with respect to $j' \in \NN$, we obtain
$
|{\bf T}({\mathcal R})|_{s + 1}^2 \lesssim \nu^{- 2} |{\mathcal R}|_s^2
$.
\end{proof}

\subsubsection*{\textbf{Bound on the remainder}}
We are now ready to give the estimate of the remainder ${\mathcal Q}$ in \cref{forma cal L nu varepsilon per normal-form}, whose bound is collected in the following.
\begin{lemma}\label{lemma-decay-remainder-Pi-neq}
Let $s > \frac12$, $U \in H^{s + 2}(\TT)$. Then there exists $\delta_0 \in (0, 1)$, depending only on $s$ and $\|U\|_{s+2}$, such that if $\varepsilon \nu^{- 1} \leq \delta_0$, then  ${\mathcal Q} \in {\mathcal M}^s$ and 
$|{\mathcal Q}|_s \lesssim \varepsilon$.
\end{lemma}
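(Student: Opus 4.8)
\medskip
\noindent\textbf{Proof strategy for Lemma~\ref{lemma-decay-remainder-Pi-neq}.} The plan is to estimate separately, in the decay norm $|\cdot|_s$, each of the two blocks in the definition \cref{def:Q} of ${\mathcal Q}$, using the multiplication and rank-one bounds of Lemma~\ref{stima resto cal L nu (1) cal Q} together with the algebra property of Lemma~\ref{lemma-algebra-decay}, and then to collect the powers of $\varepsilon\nu^{-1}\leq\delta_0$.

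First I would assemble the elementary building blocks. The scalar prefactor is harmless: by \cref{YperX} one has $|\langle X,Y\rangle|\lesssim\varepsilon^2\nu^{-2}\leq\delta_0^2$, so for $\delta_0$ small $|1-\langle X,Y\rangle|^{-1}\leq2$; also $|{\rm Id}|_s=\sqrt2$, since on each block $\Pi_j$, $j\in{\mathbb N}$, the identity is the $2\times2$ identity matrix. For the Rayleigh part I would write ${\mathcal R}_\varepsilon^\sharp=\Pi_{\neq}\big({\mathrm M}_U+{\mathrm M}_{U''}\circ(-\partial_{yy}+\varepsilon^2)^{-1}\big)\Pi_{\neq}$, with ${\mathcal R}_\varepsilon^\sharp$ as in \cref{cAnueps}: the two multiplication operators lie in ${\mathcal M}^s$ with $|{\mathrm M}_U|_s\lesssim\|U\|_s$ and $|{\mathrm M}_{U''}|_s\lesssim\|U\|_{s+2}$ by the multiplication bound of Lemma~\ref{stima resto cal L nu (1) cal Q}, while the Fourier multiplier $(-\partial_{yy}+\varepsilon^2)^{-1}\Pi_{\neq}$ is block diagonal and acts on each $\Pi_j$, $j\in{\mathbb N}$, as the scalar $(j^2+\varepsilon^2)^{-1}\leq1$, hence lies in ${\mathcal M}^s$ with norm $\lesssim1$ \emph{uniformly} in $\varepsilon$; the algebra property of Lemma~\ref{lemma-algebra-decay} then yields $|{\mathcal R}_\varepsilon^\sharp|_s\lesssim\|U\|_{s+2}$. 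For the rank-one pieces I would apply the bound on $(\partial_y^m P)Q^\top$ of Lemma~\ref{stima resto cal L nu (1) cal Q} together with the estimates \cref{stima X Y per block decoupling,stima A B per block decoupling}, obtaining
\[
|Y''X^\top|_s\lesssim\|Y\|_{s+2}\|X\|_s\lesssim\varepsilon^2\nu^{-2},\qquad |AX^\top|_s\lesssim\|A\|_s\|X\|_s\lesssim\varepsilon^2\nu^{-1},
\]
\[
|YB^\top|_s\lesssim\|Y\|_s\|B\|_s\lesssim\varepsilon^2\nu^{-1},\qquad |YX^\top|_s\lesssim\|Y\|_s\|X\|_s\lesssim\varepsilon^2\nu^{-2}.
\]

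Then I would put the pieces together. Using $|1-\langle X,Y\rangle|^{-1}\leq2$ and \cref{invertibilityconditionstep1}, the first term of \cref{def:Q} is controlled by $2\nu|Y''X^\top|_s\lesssim\varepsilon^2\nu^{-1}=\varepsilon\,(\varepsilon\nu^{-1})\leq\varepsilon$. For the second block: the correcting factor satisfies $\big|{\rm Id}+(1-\langle X,Y\rangle)^{-1}YX^\top\big|_s\leq|{\rm Id}|_s+2|YX^\top|_s\lesssim1+\varepsilon^2\nu^{-2}\lesssim1$, and the operator in the first parenthesis satisfies $\big|-\im\varepsilon{\mathcal R}_\varepsilon^\sharp+AX^\top+YB^\top\big|_s\lesssim\varepsilon\|U\|_{s+2}+\varepsilon^2\nu^{-1}\lesssim\varepsilon$; multiplying these two estimates by the algebra property of Lemma~\ref{lemma-algebra-decay} gives a contribution $\lesssim\varepsilon$. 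Summing the two contributions yields $|{\mathcal Q}|_s\lesssim\varepsilon$, which is the claim (the implicit constant depending only on $s$ and $\|U\|_{s+2}$).

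The step I expect to be the main obstacle is the only one that is not mere bookkeeping, namely the uniform-in-$\varepsilon$ control of ${\mathcal R}_\varepsilon^\sharp$ in the decay norm: boundedness of ${\mathcal R}_\varepsilon^\sharp$ on $H^s_0(\TT)$ (recorded right after \cref{cAnueps}) does \emph{not} by itself imply $|{\mathcal R}_\varepsilon^\sharp|_s<\infty$, so one must genuinely use that the singular-looking inverse $(-\partial_{yy}+\varepsilon^2)^{-1}$ is a Fourier multiplier bounded by $1$ on the nonzero modes and composes cleanly with the multiplication operators inside the algebra ${\mathcal M}^s$. Everything else reduces to the accounting of the powers $\varepsilon^{-1},\varepsilon^3,\nu^{-1}$ carried by $X,Y,A,B$, already tabulated in \cref{stima X Y per block decoupling,stima A B per block decoupling}.
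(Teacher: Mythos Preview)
Your proposal is correct and follows essentially the same approach as the paper: estimate each rank-one piece via Lemma~\ref{stima resto cal L nu (1) cal Q}-\ref{item:top} using the size bounds \cref{stima X Y per block decoupling,stima A B per block decoupling}, bound the Rayleigh part through the multiplication estimate and the block-diagonal Fourier multiplier $(-\partial_{yy}+\varepsilon^2)^{-1}\Pi_{\neq}$, and combine everything with the algebra property of Lemma~\ref{lemma-algebra-decay}. You are in fact slightly more careful than the paper about the $\mathcal{R}_\varepsilon^\sharp$ step (the paper's displayed bound $|\mathcal{R}_\varepsilon|_s\lesssim\varepsilon\|U\|_{s+2}$ contains a harmless typo---the $\varepsilon$ really comes from the prefactor $-\im\varepsilon$ in $\mathcal{Q}$), and you correctly flag this as the only substantive point beyond bookkeeping.
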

\begin{proof}
By combining Lemma \ref{stima resto cal L nu (1) cal Q} with the estimates \cref{stima X Y per block decoupling,stima A B per block decoupling}, and taking $\varepsilon \nu^{- 1}\leq \delta_0 $ with $\delta_0$ small enough, one immediately obtains that 
\begin{align}
 \Big| \frac{\nu}{1 - \langle X, Y \rangle}  Y''(y) X(y)^\top \Big|_s & \lesssim \nu \| Y \|_{s + 2} \| X \|_s \lesssim \nu (\varepsilon^{- 1} \nu^{- 1}) (\varepsilon^3 \nu^{- 1}) \lesssim \varepsilon\,, \\
  \, \Big| \frac{1}{1 - \langle X, Y \rangle}  Y(y) X(y)^\top \Big|_s & \lesssim \| Y \|_s \| X \|_s \lesssim \varepsilon^2 \nu^{- 2}\lesssim 1\,, \\
 |A X^\top|_s & \lesssim \| A \|_s \| X \|_s \lesssim \varepsilon^{- 1} (\varepsilon^3 \nu^{- 1})  \lesssim \varepsilon \,, \\
 |Y B^\top|_s & \lesssim \| Y \|_s \| B \|_s \lesssim \varepsilon^{- 1} \nu^{- 1} \varepsilon^3 \lesssim \varepsilon\,. 
\end{align}
Moreover combining the trivial fact that $|(- \partial_{yy} + \varepsilon^2)^{- 1}|_s \lesssim 1$ with Lemma \ref{lemma-algebra-decay}-\cref{item:algebra} and Lemma \ref{stima resto cal L nu (1) cal Q}-\cref{item:multiplication}, one also gets that 
$$
|{\mathcal R}_\varepsilon|_s \lesssim \varepsilon \| U \|_{s + 2} \lesssim \varepsilon\,. 
$$
All the previous estimates, together with  Lemma \ref{lemma-algebra-decay}-\cref{item:algebra}  imply the claimed bound on ${\mathcal Q}$ for some $\delta_0$ sufficiently small.
\end{proof}
\subsubsection*{\textbf{The block diagonalization step}}
The main result of this section is the definition of an off-diagonal operator $\Psi$ that block-diagonalize the operator $\cL_{\nu,\eps}^{(1)}$. More precisely, we have the following.

\begin{proposition}\label{prop-block-diag-contraction}
    Let $s > \frac12$, $U \in H^{s + 2}(\TT)$. Then there exists $\delta_0 \in (0, 1)$ small enough, depending only on $s, \|U\|_{s+2}$, such that if $\e \nu^{- 1} \leq \delta_0$ then there exists unique  $\Psi \in \mathcal{M}_{od}^s$, $\cZ\in\mathcal{M}_{bd}$ such that
    \begin{equation}
       \nu |\Psi|_s +|\cZ|_s\lesssim \eps 
    \end{equation} 
    for which the following block-diagonalization holds true
    \begin{equation}
        (\uno +  \Psi)^{-1} \cL_{\nu,\e}^{(1)} (\uno + \Psi) = \nu \de_{yy} +  \cZ \coloneqq  {\mathcal N}_{\nu, \e} \, .
    \end{equation}
\end{proposition}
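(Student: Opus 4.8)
The plan is to realize the conjugation as the fixed point of a contraction on a weighted product space. Write $\cL_{\nu,\e}^{(1)}=\nu\partial_{yy}+\cQ$ as in \cref{forma cal L nu varepsilon per normal-form}, so that $|\cQ|_s\lesssim\e$ by \cref{lemma-decay-remainder-Pi-neq}. Multiplying the desired identity on the left by $\uno+\Psi$, one sees that $(\uno+\Psi)^{-1}\cL_{\nu,\e}^{(1)}(\uno+\Psi)=\nu\partial_{yy}+\cZ$ is equivalent to
\[
[\nu\partial_{yy},\Psi]+\cQ+\cQ\Psi-\Psi\cZ=\cZ\,.
\]
Since $\partial_{yy}$ commutes with every $\Pi_j$ and $\Psi\in\cM^s_{od}$, the commutator $[\nu\partial_{yy},\Psi]$ belongs to $\cM^s_{od}$; applying the projections $\Pi_{bd}$ and $\Pi_{od}$ therefore splits the equation into the pair
\[
\cZ=\Pi_{bd}\big(\cQ+\cQ\Psi-\Psi\cZ\big)\,,\qquad [\nu\partial_{yy},\Psi]=-\Pi_{od}\big(\cQ+\cQ\Psi-\Psi\cZ\big)\,,
\]
and by \cref{eq-omologica-Pneq} the second equation is solved by $\Psi=-{\bf T}\big(\Pi_{od}(\cQ+\cQ\Psi-\Psi\cZ)\big)$, with the gain of one derivative and a loss of a factor $\nu^{-1}$.

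Next I would run a contraction argument for the map $\mathcal{F}:(\Psi,\cZ)\mapsto(\widetilde\Psi,\widetilde\cZ)$ defined by the two right-hand sides above, on the ball
\[
\mathcal{B}_K\coloneqq\big\{(\Psi,\cZ)\in\cM^s_{od}\times\cM_{bd}\,:\,\nu|\Psi|_s\le K\e,\ |\cZ|_s\le K\e\big\}
\]
for a constant $K$ depending only on $s$ and $\|U\|_{s+2}$. Using the algebra property in Lemma \ref{lemma-algebra-decay}-\cref{item:algebra}, the boundedness of $\Pi_{bd},\Pi_{od}$ on $\cM^s$, the bound $|{\bf T}(\cR)|_s\lesssim\nu^{-1}|\cR|_s$ from \cref{eq-omologica-Pneq}, and $|\cQ|_s\lesssim\e$, one obtains on $\mathcal{B}_K$ estimates of the shape $|\widetilde\cZ|_s\lesssim\e+\e\cdot\tfrac{K\e}{\nu}+\tfrac{K\e}{\nu}\cdot K\e$ and $\nu|\widetilde\Psi|_s\lesssim\e+\e\cdot\tfrac{K\e}{\nu}+\tfrac{K\e}{\nu}\cdot K\e$: all the corrections carry an extra factor $\e\nu^{-1}\le\delta_0$, so choosing $K$ large (depending only on the implicit constants) and then $\delta_0$ small enough, $\mathcal{F}$ maps $\mathcal{B}_K$ into itself. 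Expanding the bilinear terms $\cQ\Psi$ and $\Psi\cZ$ yields in the same way a contraction estimate $\|\mathcal{F}(\Psi_1,\cZ_1)-\mathcal{F}(\Psi_2,\cZ_2)\|\le C\e\nu^{-1}\,\|(\Psi_1,\cZ_1)-(\Psi_2,\cZ_2)\|$ in the weighted norm $\|(\Psi,\cZ)\|\coloneqq\nu|\Psi|_s+|\cZ|_s$, which is a genuine contraction for $\delta_0$ small. Banach's fixed point theorem then gives a unique $(\Psi,\cZ)\in\mathcal{B}_K$ with $\nu|\Psi|_s+|\cZ|_s\lesssim\e$, and $\cZ$ is block-diagonal by construction.

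Finally, since $|\Psi|_s\le K\e\nu^{-1}\le K\delta_0$ is small, Lemma \ref{lemma-algebra-decay}-\cref{item:Id+T} shows that $\uno+\Psi$ is invertible with $(\uno+\Psi)^{-1}-\uno\in\cM^s$, hence, by Lemma \ref{lemma-algebra-decay}-\cref{item:Bsigma}, a bounded isomorphism of $H^\sigma_0(\TT)$ for every $0\le\sigma\le s$; reversing the algebraic manipulation above then gives $(\uno+\Psi)^{-1}\cL_{\nu,\e}^{(1)}(\uno+\Psi)=\nu\partial_{yy}+\cZ=\cN_{\nu,\e}$, as claimed. The main obstacle is precisely the loss of the factor $\nu^{-1}$ in solving the homological equation: it forces $\Psi$ to be of size $\cO(\e\nu^{-1})$ rather than $\cO(\e)$, so the scheme closes only in the regime $\e\ll\nu$, and one must keep the two different natural sizes of $\Psi$ and $\cZ$ separate (via the weighted ball $\mathcal{B}_K$ and the weighted norm $\|\cdot\|$) in order to ensure that the bilinear remainders $\cQ\Psi$ and $\Psi\cZ$ are genuinely of lower order.
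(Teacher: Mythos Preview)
Your proof is correct and follows essentially the same strategy as the paper: rewrite the conjugation as a commutator equation, split it via $\Pi_{bd}$ and $\Pi_{od}$, invert the homological equation with ${\bf T}$, and close by a contraction in a ball of radius $\cO(\e\nu^{-1})$ for $\Psi$. The only difference is organizational: the paper first observes that $\Pi_{bd}(\Psi\cZ)=0$ (since $\Psi\cZ\in\cM^s_{od}$ by \cref{lemma-comp-diag-block-diag}), solves the block-diagonal equation explicitly as $\cZ=\cZ(\Psi)=\Pi_{bd}(\cQ+\cQ\Psi)$, substitutes back, and runs a one-variable contraction on $\Psi$ alone; you instead carry both unknowns and run a joint contraction on $(\Psi,\cZ)$ in the weighted norm. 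Both close under the same smallness condition $\e\nu^{-1}\le\delta_0$, and your extra term $\Pi_{bd}(\Psi\cZ)$ is harmless (in fact zero), so the two arguments are equivalent.
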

\begin{proof}
Recall that ${\mathcal L}_{\nu, \varepsilon}^{(1)} = \nu \partial_{yy} +  {\mathcal Q}$. We want to solve the equation 
$$
(\nu \partial_{yy} + {\mathcal Q}) ({\rm Id} + \Psi) =  ({\rm Id} + \Psi)(\nu \partial_{yy} + {\mathcal Z})
$$
which is equivalent to the equation
$$
[\nu \partial_{yy}\,,\, \Psi] = {\mathcal Z}  -  {\mathcal Q} -  {\mathcal Q} \Psi + \Psi {\mathcal Z} 
$$ We look for solutions $\Psi \in {\mathcal M}^s_{od}$ and ${\mathcal Z} \in {\mathcal M}_{bd}$. If we apply the projection $\Pi_{bd}, \Pi_{od}$ to the latter equation, using $\Pi_{bd}\big( [\nu \partial_{yy}\,,\, \Psi] \big) = 0$ and the fact that by Lemma \ref{lemma-comp-diag-block-diag}, one has $\Pi_{bd}\big( \Psi {\mathcal Z}\big) = 0$, $\Pi_{od}(\Psi {\mathcal Z}) = \Psi {\mathcal Z}$, we get 
\begin{equation}\label{equazione-Psi-cal-Z-main}
\begin{cases}
 [\nu \partial_{yy}\,,\, \Psi] =   - \Pi_{od}({\mathcal Q}) -  \Pi_{od}({\mathcal Q} \Psi) + \Psi {\mathcal Z}  \\
 {\mathcal Z}  -  \Pi_{bd}({\mathcal Q}) -  \Pi_{bd}({\mathcal Q} \Psi)  = 0\,. 
\end{cases}
\end{equation}
From the second equation we recover ${\mathcal Z} \equiv {\mathcal Z}(\Psi)$ as a function of $\Psi$, namely
$$
{\mathcal Z}(\Psi) \coloneqq     \Pi_{bd}({\mathcal Q}) +  \Pi_{bd}({\mathcal Q} \Psi)
$$
By applying Lemmas \ref{lemma-decay-remainder-Pi-neq} and \ref{lemma-algebra-decay}, one easily shows that 
\begin{equation}\label{stime-cal-Z-Psi}
\begin{aligned}
& |{\mathcal Z}(\Psi)|_s \lesssim \varepsilon (1 + |\Psi|_s)\,, \\
& |{\mathcal Z}(\Psi_1) - {\mathcal Z}(\Psi_2)|_s \lesssim \varepsilon |\Psi_1 - \Psi_2|_s\,. 
\end{aligned}
\end{equation}
Therefore, by substituting ${\mathcal Z}(\Psi)$ in the first equation in \cref{equazione-Psi-cal-Z-main}, one obtains 
\begin{equation}\label{equazione-ridotta-Psi}
[\nu \partial_{yy}\,,\, \Psi] = {\mathcal F}(\Psi)\,, \quad {\mathcal F}(\Psi) \coloneqq    -  \Pi_{od}({\mathcal Q}) -  \Pi_{od}({\mathcal Q} \Psi) + (\Psi {\mathcal Z}(\Psi))\,.
\end{equation}
By applying  Lemmas \ref{lemma-decay-remainder-Pi-neq} and \ref{lemma-algebra-decay} and using the estimates \cref{stime-cal-Z-Psi}, one deduces that the map 
$$
{\mathcal M}^s_{od} \to {\mathcal M}^s_{od}, \quad \Psi \mapsto {\mathcal F}(\Psi)
$$
satisfies the estimates 
\begin{equation}\label{stime-cal-F-Psi}
\begin{aligned}
& |{\mathcal F}(\Psi)|_s \lesssim \varepsilon (1 + |\Psi|_s + |\Psi|_s^2)\,, \quad \forall \Psi \in {\mathcal M}^s_{od} \\
& |{\mathcal F}(\Psi_1 ) - {\mathcal F}(\Psi_2)|_s \lesssim \varepsilon(1 + |\Psi_1|_s + |\Psi_2|_s) |\Psi_1 - \Psi_2|_s, \quad \forall \Psi_1, \Psi_2 \in {\mathcal M}^s_{od}\,. 
\end{aligned}
\end{equation}
By using Lemma \ref{eq-omologica-Pneq}, where the operator ${\bf T}$ is introduced, the equation \cref{equazione-ridotta-Psi} is equivalent to the fixed-point equation 
\begin{equation}\label{eq-punto-fisso-Psi}
\Psi = {\bf \Phi}(\Psi)\,, \quad {\bf \Phi}(\Psi) \coloneqq   {\bf T}({\mathcal F}(\Psi))\,. 
\end{equation}
We define
$$
{\mathcal B}_s(\rho) \coloneqq  \Big\{ \Psi \in {\mathcal M}^s_{od} : |\Psi|_s \leq \rho \Big\}
$$
and we claim the following. 
\begin{quote}
{\bf Claim.} There exist constants $C_* \equiv C_* (s) > 0$, $\delta_0 \equiv \delta_0(s) > 0$ such that if $\varepsilon \nu^{- 1} \leq \delta_0$, then the map 
$
{\bf \Phi} : {\mathcal B}_s(C_* \varepsilon \nu^{- 1}) \to {\mathcal B}_s(C_* \varepsilon \nu^{- 1})
$
is a contraction.    
\end{quote}
To prove this claim, let $\Psi \in {\mathcal B}_s(C_* \varepsilon \nu^{- 1})$. Then by Lemma \ref{eq-omologica-Pneq} and estimate \cref{stime-cal-F-Psi}, one gets 
$$
|{\bf \Phi}(\Psi)|_s \leq C(s) \varepsilon \nu^{- 1} (1 + C_* \varepsilon \nu^{- 1} + [C_* \varepsilon \nu^{- 1}]^2) \leq C_* \varepsilon \nu^{- 1} ,
$$
where we are taking $C_* \equiv C_* (s) \gg 1$ large enough and $\varepsilon \nu^{- 1} \ll 1$ small enough. 
Moreover by taking $\Psi_1, \Psi_2 \in {\mathcal B}_s(C_* \varepsilon \nu^{- 1})$, one gets that 
$$
|{\bf \Phi}(\Psi_1) - {\bf \Phi}(\Psi_2)|_s \leq  C(s) \varepsilon \nu^{- 1}  (1 + 2 C_* \varepsilon \nu^{- 1} )|\Psi_1 - \Psi_2|_s \leq \frac12 |\Psi_1 - \Psi_2|_s
$$
by taking $\varepsilon \nu^{- 1} \ll 1$ small enough. We have then proved the desired claim.

Thus, by the contraction mapping theorem we find a unique solution $\Psi \in {\mathcal B}_s(C_* \varepsilon \nu^{- 1})$ of the fixed point equation $\Psi = {\bf \Phi}(\Psi)$. Note that by using again  Lemma \ref{eq-omologica-Pneq} (together with \cref{stime-cal-F-Psi}), we also have that $\Psi \in {\mathcal M}^{s + 1}_{od}$ and $|\Psi|_{s + 1} \lesssim \varepsilon \nu^{- 1}$. By recalling \cref{stime-cal-Z-Psi}, we get $|{\mathcal Z}|_s \lesssim \varepsilon$. Finally, the invertibility of ${\rm Id} + \Psi$ follows by the Neumann series argument in Lemma \ref{lemma-algebra-decay}-\cref{item:Id+T} (clearly by taking $\varepsilon \nu^{- 1} \ll 1$ small enough). The proof of the Proposition is then concluded.
\end{proof}

With Proposition \ref{prop-block-diag-contraction} at hand, we are finally ready to conclude to proof of Theorem \ref{teorema coniugio cal Lk}.
\begin{proof}[Proof of Theorem \ref{teorema coniugio cal Lk}]
    We define $\Phi_{\nu, \varepsilon}: H^s \to H^s$ as the mapping associated, in the sense of \cref{matrixrepresentation}, to the matrix product
\begin{equation}\label{Phinueps}
 \Phi_{\nu, \varepsilon}\coloneqq \mathtt T^{- 1} \circ \begin{bmatrix}
1 & 0^\top \\
0 & {\rm Id} + \Psi
\end{bmatrix} \, .
\end{equation}
Then Theorem \ref{teorema coniugio cal Lk}, except for the expansion of $V_{\nu,\eps}^{(0)}$ in \cref{exp:NFfinal}, follows by combining  Lemma \ref{primo coniugio normal-form} and  Proposition \ref{prop-block-diag-contraction}. 

It remains to prove the asymptotic expansion of the eigenfunction $V_{\nu,\e}^{(0)}$ of $\cL_{\nu,\e}$ associated with its unstable eigenvalue $\lambda_{\nu,\e}^{(0)}$. An unstable eigenvector is given by $\Phi_{\nu,\e}(1)$, which from \cref{Phinueps} is associated with
\begin{equation}
      \ttT^{-1} \begin{bmatrix}
        1 \\ 0
    \end{bmatrix} \stackrel{\cref{partialTinverse}}{=} \frac{1}{1-\langle Y_{\nu,\e}, X_{\nu,\e}\rangle }\begin{bmatrix}
        1 \\ -Y_{\nu,\e}(y)
    \end{bmatrix}\, .
\end{equation}
Note that $Y_{\nu,\eps}\in H^{s+2}_0$ by Lemma \ref{lem:vheq},
Hence, combining \cref{YperX,actualY} and adapting  the notation $\cO_{\rm fun}$ in \cref{sec:unstable} to measure errors in $H^{s+2}$, we obtain
\begin{equation}
   \ttT^{-1} \begin{bmatrix}
        1 \\ 0
    \end{bmatrix} = \begin{bmatrix}
    1 \\ \cA_{\nu,\e}^{-1}\Big(\frac{\im}{\e}U''(y) +\im \e U(y) \Big) + \cO_{{\rm fun}}\big(\frac{\e}{\nu^3}\big)
\end{bmatrix} \Big(1 + \cO\big(\frac{\e^2}{\nu^2}\big) \Big)\,.
\end{equation}
Appealing to \cref{eq:A-1Usecond} and, the expression above becomes
\begin{equation}
\Phi_{\nu,\e}[1] = \begin{bmatrix}
    1 \\ \frac{\im }{\nu\e} U(y)  + \cO_{{\rm fun}}\big(\frac{\e}{\nu}+\frac{\e}{\nu^3}\big)
\end{bmatrix} \Big(1 + \cO\big(\frac{\e^2}{\nu^2}\big) \Big)\,.
\end{equation}
We conclude that the eigenvector $V_{\nu,\e}^{(0)}$ in Theorem \ref{teorema coniugio cal Lk} is associated with
\begin{equation}
  -\im \nu \e \Phi_{\nu,\e}[1]  = \begin{bmatrix}
    -\im \nu \e \\  U(y)  + \cO_{{\rm fun}}\big(\e^2 +\frac{\e^2}{\nu^2}\big)
\end{bmatrix} \Big(1 + \cO\big(\frac{\e^2}{\nu^2}\big) \Big)\,, 
\end{equation}
namely
\begin{equation}
    V_{\nu,\e}^{(0)} =U(y) -\im \nu \e   + \cO_{{\rm fun}}\big(\e^2 +\frac{\e^2}{\nu^2}\big) \, .
\end{equation}
Since $U(y)$ and the error are in $H^{s+2}_0$, the expansion above proves  \cref{exp:NFfinal}.
\end{proof}

\appendix
\section{Rank-$1$ update of a closed operator}\label{rank1proof}

In this section we prove Lemma \ref{lem:ShermanMorrisoninf}. We first observe that, if $
\langle g,\cA^{-1} f\rangle_H+1= 0$, then the vector $\cA^{-1} f \in H $ is such that
\begin{equation}
     (\cA+f\langle g,\cdot\rangle_H)  \cA^{-1} f = f + \langle g,\cA^{-1} f\rangle_H f = 0 \, ,
\end{equation}
and the operator $\cA+f\langle g,\cdot\rangle_H$ is not invertible.  On the other hand, if $
\langle g,\cA^{-1} f\rangle_H+1\neq 0$, then the operator 
\begin{equation}\label{theinverse}
    \cA^{-1}-\frac{\cA^{-1}(f\langle g,\cdot\rangle_H)\cA^{-1}}{1+\langle g,\cA^{-1} f\rangle_H}:H\to H
\end{equation}
is well-defined and bounded. We have
\begin{subequations}\label{id+robe}
\begin{align}\label{id+robe1}
  &\big(\cA+f\langle g,\cdot\rangle_H\big)  \Big(\cA^{-1}-\frac{\cA^{-1}(f\langle g,\cdot\rangle_H)\cA^{-1}}{1+\langle g,\cA^{-1} f\rangle_H}\Big) \\
  = &\uno -\frac{(f\langle g,\cdot\rangle_H)\cA^{-1}}{1+\langle g,\cA^{-1} f\rangle_H} + (f\langle g,\cdot\rangle_H)\cA^{-1} - \frac{(f\langle g,\cdot\rangle_H)\cA^{-1}(f\langle g,\cdot\rangle_H)\cA^{-1}}{1+\langle g,\cA^{-1} f\rangle_H}\, ,
\end{align}
and
\begin{align}\label{id+robe2}
  & \Big(\cA^{-1}-\frac{\cA^{-1}(f\langle g,\cdot\rangle_H)\cA^{-1}}{1+\langle g,\cA^{-1} f\rangle_H}\Big) \big(\cA+f\langle g,\cdot\rangle_H\big)  \\ 
  = &\uno+ \cA^{-1}(f\langle g,\cdot\rangle_H) -\frac{\cA^{-1}(f\langle g,\cdot\rangle_H)}{1+\langle g,\cA^{-1} f\rangle_H}  - \frac{\cA^{-1}(f\langle g,\cdot\rangle_H)\cA^{-1}(f\langle g,\cdot\rangle_H)}{1+\langle g,\cA^{-1} f\rangle_H}\, .
\end{align}
\end{subequations}
We observe that, by rearranging parentheses,
\begin{subequations}\label{propassociativa}
\begin{equation}\label{propassociativa1}
    (f\langle g,\cdot\rangle_H)\cA^{-1}(f\langle g,\cdot\rangle_H)\cA^{-1} =  f (\langle g,\cdot\rangle_H\cA^{-1}f)\langle g,\cdot\rangle_H \cA^{-1} =  (\langle g,\cA^{-1} f\rangle_H ) f\langle g,\cdot\rangle_H \cA^{-1}\, ,
\end{equation}
and
\begin{equation}\label{propassociativa2}
    \cA^{-1}(f\langle g,\cdot\rangle_H)\cA^{-1}(f\langle g,\cdot\rangle_H) = \cA^{-1} f (\langle g,\cdot\rangle_H\cA^{-1}f)\langle g,\cdot\rangle_H  =  (\langle g,\cA^{-1} f\rangle_H ) \cA^{-1}f\langle g,\cdot\rangle_H \, .
\end{equation}
\end{subequations}
By applying \cref{propassociativa1,propassociativa2} in \cref{id+robe1,id+robe2} respectively, we obtain
\begin{subequations}\label{identitaH}
\begin{align}\label{identitaH1}
  &\big(\cA+f\langle g,\cdot\rangle_H\big)  \Big(\cA^{-1}-\frac{\cA^{-1}(f\langle g,\cdot\rangle_H)\cA^{-1}}{1+\langle g,\cA^{-1} f\rangle_H}\Big) \\
  &= \uno -(1+\langle g,\cA^{-1} f\rangle_H)\frac{(f\langle g,\cdot\rangle_H)\cA^{-1}}{1+\langle g,\cA^{-1} f\rangle_H} + (f\langle g,\cdot\rangle_H)\cA^{-1} = \uno\, ,
\end{align}
and
\begin{align}\label{identitaH2}
  & \Big(\cA^{-1}-\frac{\cA^{-1}(f\langle g,\cdot\rangle_H)\cA^{-1}}{1+\langle g,\cA^{-1} f\rangle_H}\Big) \big(\cA+f\langle g,\cdot\rangle_H\big)  \\ 
  = &\uno+ \cA^{-1}(f\langle g,\cdot\rangle_H) -(1+\langle g,\cA^{-1} f\rangle_H) \frac{\cA^{-1}(f\langle g,\cdot\rangle_H)}{1+\langle g,\cA^{-1} f\rangle_H}  = \uno\, .
\end{align}
\end{subequations}
Identities \cref{identitaH1,identitaH2} show that the operator in \cref{theinverse} is the inverse of $\cA+f\langle g,\cdot\rangle_H$. \qed

\bibliographystyle{siam}

\end{document}